\theoremstyle{plain}
\newtheorem{thm}{Theorem}
\newtheorem{cor}[thm]{Corollary} 
\newtheorem{lem}[thm]{Lemma}
\newtheorem{prop}[thm]{Proposition}
\newenvironment{thm1'}
{\addtocounter{thm}{-1}%
	\begin{thm}}
	{\end{thm}}
\newenvironment{thm'}
{\addtocounter{thm}{-1}%
	\begin{thm}}
	{\end{thm}}
\newenvironment{cor'}
{\addtocounter{thm}{-1}%
	\begin{cor}}
	{\end{cor}}
\newenvironment{lem'}
{\addtocounter{thm}{-1}%
	\begin{lem}}
	{\end{lem}}
\newenvironment{prop'}
{\addtocounter{thm}{-1}%
	\begin{prop}}
	{\end{prop}}
\newtheorem{conjecture}[thm]{Conjecture}
\newtheorem{definition}[thm]{Definition}
\newtheorem{remark}[thm]{Remark}
\def\bbz{\mathbb{Z}}
\def\bbq{\mathbb{Q}}
\def\bba{\mathbb{A}}
\def\bbh{\mathbb{H}}
\def\bbn{\mathbb{N}}
\def\bbg{\mathbb{G}}
\def\bbv{\mathbb{V}}
\def\bbu{\mathbb{U}}
\def\bbl{\mathbb{L}}
\def\bbw{\mathbb{W}}
\def\gcal{\mathcal{G}}
\def\ucal{\mathcal{U}}
\def\vcal{\mathcal{V}}
\def\lcal{\mathcal{L}}
\def\pcal{\mathcal{P}}
\def\wcal{\mathcal{W}}
\def\hfr{\mathfrak{h}}
\def\afr{\mathfrak{a}}
\def\pfr{\mathfrak{p}}
\def\Afr{\mathfrak{A}}
\def\ufr{\mathfrak{u}}
\def\gl{\mathfrak{gl}}
\def\gfr{\mathfrak{g}}
\def\f{\mathfrak{f}}
\def\xbf{\mathbf{x}}
\def\vbf{\mathbf{v}}
\def\ibf{\mathbf{i}}
\DeclareMathOperator\Spec{Spec}
\DeclareMathOperator\SL{SL}
\DeclareMathOperator\GL{GL}
\DeclareMathOperator\Ad{Ad}
\DeclareMathOperator\ad{ad}
\DeclareMathOperator\Lie{Lie}
\DeclareMathOperator{\im}{Im}
\DeclareMathOperator{\pr}{pr}
\newcommand{\wt}[1]{\widetilde{#1}}
\newcommand{\wh}[1]{\widehat{#1}}
\def\h{\hspace{1mm}}
\def\be{\begin{equation}}
\def\ee{\end{equation}}
\def\one{\mathds{1}}
\def\acts{\curvearrowright}
\newenvironment{narrow}[1][]
{\list{}{\setkeys{setpar}{left,right}%
		\setkeys{setpar}{#1}%
		\listparindent=\parindent
		\topsep=0pt
		\partopsep=0pt
		\parsep=\parskip}\item\relax\hspace*{\listparindent}\ignorespaces}
{\endlist}
\begin{document}
\title{Inducing super-approximation.}

\author{Alireza Salehi Golsefidy}
\address{Mathematics Dept, University of California, San Diego, CA 92093-0112.}
\email{golsefidy@ucsd.edu}

\author{Xin Zhang}
\address{Mathematics Dept, University of Illinois, Urbana, IL 61801.}
\email{xz87@illinois.edu}
\thanks{A. S-G. was partially supported by the NSF grants DMS-1303121 and DMS-1602137, and A. P. Sloan Research Fellowship.}
\subjclass{Primary: 22E40, Secondary: 20G35, 05C25}
\date{12/26/2016}
\begin{abstract}
Let $\Gamma_2\subseteq \Gamma_1$ be finitely generated subgroups of $\GL_{n_0}(\bbz[1/q_0])$. For $i=1$ or $2$, let $\bbg_i$ be the Zariski-closure of $\Gamma_i$ in $(\GL_{n_0})_{\bbq}$, $\bbg_i^{\circ}$ be the Zariski-connected component of $\bbg_i$, and let $G_i$ be the closure of $\Gamma_i$ in $\prod_{p\nmid q_0}\GL_{n_0}(\bbz_p)$.  

In this article we prove that, if $\bbg_1^{\circ}$ is the smallest closed normal subgroup of $\bbg_1^{\circ}$ which contains $\bbg_2^{\circ}$ and $\Gamma_2\acts G_2$ has spectral gap, then $\Gamma_1\acts G_1$ has spectral gap.
\end{abstract}
\maketitle
\section{Introduction and the statement of the main results.}
Let $\Gamma$ be a subgroup of a compact, Hausdorff, second countable group $G$. Let $\overline{\Gamma}$ be the closure of $\Gamma$ in $G$. Suppose $\Omega$ is a finite symmetric generating set of $\Gamma$. Let $\pcal_{\Omega}$ be the probability counting measure on $\Omega$, and let
\[
T_{\Omega}:L^2(\overline{\Gamma})\rightarrow L^2(\overline{\Gamma}), \h T_{\Omega}(f):=\pcal_{\Omega}\ast f:=\frac{1}{|\Omega|}\sum_{\omega\in \Omega} L_{\omega}(f),
\]
where $L_{\omega}(f)(g):=f(\omega^{-1}g)$.
Then it is well-known that $T_{\Omega}$ is a self-adjoint operator, $T_{\Omega}(\one_{\overline{\Gamma}})=\one_{\overline{\Gamma}}$ where $\one_{\overline{\Gamma}}$ is the constant function on $\overline{\Gamma}$, and the operator norm $\|T_{\Omega}\|$ is 1. So the spectrum of $T_{\Omega}$ is a subset of $[-1,1]$ and $T_{\Omega}$ sends the space $L^2(\overline{\Gamma})^{\circ}$ orthogonal to the constant functions to itself. Let $T_{\Omega}^{\circ}$ be the restriction of $T_{\Omega}$ to $L^2(\overline{\Gamma})^{\circ}$. Let
\[
\lambda(\pcal_{\Omega};G):=\sup\{|c||\h c \text{ is in the spectrum of the restriction of } T_{\Omega}^{\circ}\}.
\]
 We say the left action $\Gamma\acts G$ of $\Gamma$ on $G$ has {\em spectral gap} if $\lambda(\pcal_{\Omega};G)<1$. 

 It is worth mentioning that, if $\Omega_1$ and $\Omega_2$ are two generating sets of $\Gamma$ and $\lambda(\pcal_{\Omega_1};G)<1$, then $\lambda(\pcal_{\Omega_2};G)<1$. So having spectral gap is a property of the action $\Gamma\acts G$, and it is independent of the choice of a generating set for $\Gamma$.

 The following is the main theorem of this article.
\begin{thm}\label{t:main}
	 Let $\Gamma_2\subseteq \Gamma_1$ be two finitely generated subgroups of $\GL_{n_0}(\bbz[1/q_0])$. For $i=1,2$, let $\bbg_i$ be the Zariski-closure of $\Gamma_i$ in $(\GL_{n_0})_{\bbq}$ for $i=1,2$, and let $\bbg_i^{\circ}$ be the Zariski-connected subgroup of $\bbg_i$. Suppose the smallest closed normal subgroup of $\bbg_1^{\circ}$ which contains $\bbg_2^{\circ}$ is $\bbg_1^{\circ}$. Then, if $\Gamma_2\acts \prod_{p\nmid q_0}\GL_{n_0}(\bbz_p)$ has spectral gap, then $\Gamma_1\acts \prod_{p\nmid q_0}\GL_{n_0}(\bbz_p)$ has spectral gap.
\end{thm}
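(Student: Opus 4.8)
The plan is to reduce the statement to a quantitative comparison between the Cayley graphs (or random walks) attached to $\Gamma_2$ and to $\Gamma_1$ at each finite level $p$, and then to invoke the fact (due to Salehi Golsefidy--Varj\'u) that spectral gap for $\Gamma_i \acts \prod_{p\nmid q_0}\GL_{n_0}(\bbz_p)$ is equivalent to a uniform spectral gap for the family of finite quotients, together with the characterization of which $\Gamma_i$ have the ``super-approximation'' property. First I would fix finite symmetric generating sets $\Omega_2 \subseteq \Gamma_2$ and extend to $\Omega_1 \subseteq \Gamma_1$; since spectral gap is independent of the generating set, this is harmless. The key structural input is the hypothesis that $\bbg_1^\circ$ is the smallest closed normal subgroup of itself containing $\bbg_2^\circ$: this means that finitely many $\Gamma_1$-conjugates $g_1 \bbg_2^\circ g_1^{-1}, \dots, g_k \bbg_2^\circ g_k^{-1}$ generate $\bbg_1^\circ$, and hence (after passing to a finite-index subgroup and controlling denominators by enlarging $q_0$) finitely many conjugates $g_j \Gamma_2 g_j^{-1}$ generate a finite-index subgroup of $\Gamma_1$. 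This is the mechanism by which a Poincar\'e-type inequality on $\Gamma_2$ can be ``induced up'' to $\Gamma_1$.

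The main steps, in order, are: (1) Reduce to the congruence picture: show it suffices to prove a uniform (independent of $p$, and of finite powers of $p$) spectral gap for the action of $\Gamma_1$ on the finite groups $\pi_m(\Gamma_1) \subseteq \prod_{p \mid m}\GL_{n_0}(\bbz/p^{k_p})$ for squarefree-supported $m$ coprime to $q_0$; this is the standard dictionary between spectral gap on the profinite completion and a Cayley-graph expansion estimate, and one reduces further to prime and prime-power levels using that $\bbg_1^\circ$ (resp. a bounded-index subgroup of $\bbg_1$) behaves well under strong approximation. (2) At a fixed level $N$, compare the normalized adjacency operator $T_{\Omega_1}$ on the Cayley graph of $\pi_N(\Gamma_1)$ with the operator built from $\bigcup_j g_j \Omega_2 g_j^{-1}$: because the $g_j \Gamma_2 g_j^{-1}$ together generate a finite-index subgroup of $\Gamma_1$, a bounded number of steps of the $\Omega_1$-walk dominates one step of each conjugated $\Omega_2$-walk, giving a comparison of Dirichlet forms with constants depending only on $\Omega_1, \Omega_2$, the $g_j$, and the fixed index — crucially \emph{not} on $N$. (3) Bound the spectral gap for each conjugated walk $g_j \Omega_2 g_j^{-1}$ at level $N$ by the spectral gap hypothesis for $\Gamma_2 \acts G_2$: conjugation by a fixed element $g_j \in \GL_{n_0}(\bbz[1/q_0])$ induces, at each level coprime to a suitably enlarged $q_0$, an isomorphism of the relevant congruence quotients, so $\lambda$ is preserved up to the identification; summing over $j$ and combining with Step (2) yields $\lambda(\pcal_{\Omega_1}; G_1) < 1$. (4) Translate back from the finite-level estimate to spectral gap of $\Gamma_1 \acts G_1$.

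The hard part, I expect, is Step (1) together with the bookkeeping that makes Step (2) uniform in $N$. Several genuinely nontrivial issues hide there: one must handle the difference between $\bbg_i$ and $\bbg_i^\circ$ (finite-index subgroups, and the fact that $\Gamma_2$ may project to a proper finite-index subgroup of a congruence quotient of $\Gamma_1$), one must ensure that enlarging $q_0$ to absorb denominators of the $g_j$ and the relators does not break any hypothesis, and one must invoke strong approximation to guarantee that the image of $\Gamma_1^\circ$ in $\prod_{p}\GL_{n_0}(\bbz_p)$ is as large as expected so that the ``level $N$'' quotients are the arithmetically natural ones. The delicate analytic core is showing that the relation ``$\Gamma_1 \acts G_1$ has spectral gap'' is stable under passing to and from finite-index subgroups, under direct-product decompositions over distinct primes (an approximate-tensorization or $\ell^2$-flattening statement), and under the bounded-generation comparison in Step (2); this stability, rather than any single clever inequality, is where the real work lies, and it is presumably the content of the lemmas the authors develop before this theorem.
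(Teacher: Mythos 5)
There is a genuine gap, and it lies precisely in the move you flag as "the key structural input." You assert that because finitely many $\Gamma_1$-conjugates of $\bbg_2^\circ$ generate $\bbg_1^\circ$ as an algebraic group, it follows "(after passing to a finite-index subgroup and controlling denominators) \dots that finitely many conjugates $g_j \Gamma_2 g_j^{-1}$ generate a finite-index subgroup of $\Gamma_1$." This is false in general. Take $\Gamma_1 = \SL_2(\bbz)$ and let $\Gamma_2$ be a thin, Zariski-dense free subgroup (so $\bbg_1^\circ = \bbg_2^\circ = \SL_2$ and the normal-closure hypothesis holds trivially). The subgroup generated by finitely many $\Gamma_1$-conjugates of $\Gamma_2$ is again a finitely generated Zariski-dense subgroup, and nothing forces it to be of finite index in $\SL_2(\bbz)$ -- indeed it usually will not be. The correct version of this step lives in the profinite completion: what the paper establishes (Theorem~\ref{t:BGAdelic}) is that finitely many $\Gamma_1$-conjugates of the \emph{closure} $\wh{\Gamma}_2 \subseteq \prod_{p\nmid q_0}\GL_{n_0}(\bbz_p)$, multiplied as a \emph{product of subsets} $\gamma_1\wh{\Gamma}_2\gamma_1^{-1}\cdots\gamma_m\wh{\Gamma}_2\gamma_m^{-1}$ (not the subgroup they generate), already form an open subgroup of $\wh{\Gamma}_1$. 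That is a strictly stronger, "bounded generation," statement, and proving it with a number of factors that is uniform over all $p$ and all depths is where essentially all the work in the paper sits.

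Your steps (2)--(3) inherit the same problem and also have an analytic issue. The Dirichlet-form comparison you describe ("a bounded number of $\Omega_1$-steps dominates one $\Omega_2'$-step," i.e.\ $\mathcal{E}_{\Omega_2'} \lesssim \mathcal{E}_{\Omega_1}$ on $L^2(\pi_N(\Gamma_1))$) only yields a lower bound for the gap of $T_{\Omega_1}$ if $T_{\Omega_2'}$ already has a gap bounded away from $1$ \emph{on} $L^2(\pi_N(\Gamma_1))$. But if $\pi_N(\langle \Omega_2' \rangle)$ is a proper subgroup of $\pi_N(\Gamma_1)$, then functions constant on its left cosets are non-constant fixed vectors of $T_{\Omega_2'}$, so that gap is zero and the comparison gives nothing. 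To get a nontrivial gap for the conjugated walk one needs precisely a bounded product decomposition of $\pi_N(\Gamma_1)$ (or a bounded-index subgroup thereof) into conjugates of $\pi_N(\Gamma_2)$, with the number of factors independent of $N$ -- this is what plugs into Varj\'{u}'s lemma \cite[Lemma A.2]{BK}, which you never invoke. The paper proves this bounded generation in three coordinated pieces: a Lie-algebra generation statement (Proposition~\ref{p:GeneratingLieAlgebra}), a dominance-of-morphisms statement at almost every fiber via EGA (Proposition~\ref{p:BGZariskiTopology}) combined with Pink--R\"utsche and Gowers/Nikolov--Pyber to handle residues mod $p$ for large $p$ (Lemma~\ref{l:BGModP}), and a quantitative $p$-adic open mapping theorem via Baker--Campbell--Hausdorff for a fixed uniform congruence depth at every $p$ (Proposition~\ref{p:LargeCongruence}). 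These are not "bookkeeping" nor "$\ell^2$-flattening"; they are the theorem. Your outline names the right soft surroundings (finite-index reductions, independence of generating set, adelic product structure) but skips the actual hard content, and the shortcut you substitute for it is not true.
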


\begin{cor}\label{c:SimpleCase}
		 Let $\Gamma_2\subseteq \Gamma_1$ be two finitely generated subgroups of $\GL_{n_0}(\bbz[1/q_0])$. Let $\bbg_1$ be the Zariski-closure of $\Gamma_1$ in $(\GL_{n_0})_{\bbq}$, and let $\bbg_1^{\circ}$ be the Zariski-connected subgroup of $\bbg_1$. Suppose $\bbg_1^{\circ}$ is an almost $\bbq$-simple $\bbq$-group, and $\Gamma_2$ is an infinite group.
		 Then, if $\Gamma_2\acts \prod_{p\nmid q_0}\GL_{n_0}(\bbz_p)$ has spectral gap, then $\Gamma_1\acts \prod_{p\nmid q_0}\GL_{n_0}(\bbz_p)$ has spectral gap.
\end{cor}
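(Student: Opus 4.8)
The plan is to obtain Corollary \ref{c:SimpleCase} as a formal consequence of Theorem \ref{t:main}: the only thing to verify is that the normality hypothesis of the theorem -- that the smallest closed normal subgroup of $\bbg_1^{\circ}$ containing $\bbg_2^{\circ}$ equals $\bbg_1^{\circ}$ -- is implied by the assumptions that $\bbg_1^{\circ}$ is almost $\bbq$-simple and that $\Gamma_2$ is infinite.

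First I would record the elementary containments. Since $\Gamma_2\subseteq\Gamma_1$, passing to Zariski closures in $(\GL_{n_0})_{\bbq}$ gives $\bbg_2\subseteq\bbg_1$, and, $\bbg_2^{\circ}$ being connected and passing through the identity, $\bbg_2^{\circ}\subseteq\bbg_1^{\circ}$; both $\bbg_2$ and its identity component $\bbg_2^{\circ}$ are defined over $\bbq$. Next, because $\Gamma_2$ is infinite while a zero-dimensional algebraic group has only finitely many points, one gets $\dim\bbg_2=\dim\bbg_2^{\circ}\geq 1$, so $\bbg_2^{\circ}$ is a \emph{non-trivial} connected $\bbq$-subgroup of $\bbg_1^{\circ}$. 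This is the only place the hypothesis ``$\Gamma_2$ infinite'' is used.

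Then I would form $N$, the smallest closed normal subgroup of $\bbg_1^{\circ}$ containing $\bbg_2^{\circ}$, i.e.\ the subgroup generated by $\{g\bbg_2^{\circ}g^{-1}: g\in\bbg_1^{\circ}\}$. It is closed and connected, being generated by a family of closed connected subgroups through the identity, and it is normal in $\bbg_1^{\circ}$ by construction. It is also defined over $\bbq$: since $\bbg_2^{\circ}$ and $\bbg_1^{\circ}$ are defined over $\bbq$ and we are in characteristic zero, the family of conjugates $g\bbg_2^{\circ}g^{-1}$ is stable under $\Gal(\overline{\bbq}/\bbq)$, hence the subgroup it generates descends to $\bbq$. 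Finally $N\supseteq\bbg_2^{\circ}$ is non-trivial, hence positive-dimensional as it is connected; but an almost $\bbq$-simple $\bbq$-group has no connected normal $\bbq$-subgroup other than the trivial one and itself, so $N=\bbg_1^{\circ}$. Theorem \ref{t:main} then yields that $\Gamma_1\acts\prod_{p\nmid q_0}\GL_{n_0}(\bbz_p)$ has spectral gap.

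Since the statement is a formal corollary, there is no genuine obstacle; the only two points deserving a line of care are that the normal closure of $\bbg_2^{\circ}$ in $\bbg_1^{\circ}$ really is defined over $\bbq$ (so that the defining property of an almost $\bbq$-simple group may legitimately be invoked), and that ``almost $\bbq$-simple'' is being used in the sense that excludes proper positive-dimensional normal $\bbq$-subgroups -- in particular that it forces $\bbg_1^{\circ}$ to be semisimple, so that a non-trivial connected normal $\bbq$-subgroup cannot be pushed off to a finite group.
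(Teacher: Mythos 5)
Your argument is essentially identical to the paper's: use the infiniteness of $\Gamma_2$ to get that $\bbg_2^{\circ}$ is a non-trivial connected $\bbq$-subgroup of $\bbg_1^{\circ}$, then invoke almost $\bbq$-simplicity to conclude the normal closure is all of $\bbg_1^{\circ}$, and finish by Theorem~\ref{t:main}. The extra care you take (checking the normal closure is connected and defined over $\bbq$) is correct and fills in details the paper leaves implicit.
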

\begin{proof}
	Since $\Gamma_2$ is infinite, the Zariski-connected component $\bbg_2^{\circ}$ of the Zariski-closure $\bbg_2$ of $\Gamma_2$ in $(\GL_{n_0})_{\bbq}$ is a non-trivial (Zariski-connected) $\bbq$-subgroup of $\bbg_1^{\circ}$. Since $\bbg_1^{\circ}$ is an almost $\bbq$-simple group and $\bbg_2^{\circ}$ is a non-trivial Zariski-connected $\bbq$-subgroup of $\bbg_1^{\circ}$, the smallest normal subgroup of $\bbg_1^{\circ}$ which contains $\bbg_2^{\circ}$ is $\bbg_1^{\circ}$. And so by Theorem~\ref{t:main} claim follows. 
\end{proof}
	Notice that the smallest closed normal subgroup of $\bbg_1^{\circ}$ which contains $\bbg_2^{\circ}$ is $\bbg_1^{\circ}$ if and only if the restriction of any non-trivial representation $\rho:\bbg_1^{\circ}\rightarrow (\GL_{m})_{\bbq}$ to $\bbg_2^{\circ}$ is still non-trivial. 

\begin{remark}
	The first result of this kind goes back to the work of Burger and Sarnak~\cite{BS}. Their result implies Theorem~\ref{t:main} when $\Gamma_i$'s are integral points of (fixed embeddings of) two Zariski-connected semisimple $\bbq$-groups $\bbg_i$'s. 
\end{remark}

Theorem~\ref{t:main} has an immediate application in explicit construction of expanders. Let us quickly recall that a family of $d$-regular graphs $X_i$ is called a family of expanders if the size of vertices $|V(X_i)|$ goes to infinity and there is a positive number $\delta_0$ such that for any subset $A$ of $V(X_i)$ we have
\[
\frac{|e(A,V(X_i)\setminus A)|}{\min(|A|,|V(X_i)\setminus A|)}>\delta_0,
\]
where $e(A,B)$ is the set of edges that connect a vertex in $A$ to a vertex in $B$. Expanders have a lot of applications in theoretical computer science (see \cite{HLW} for a survey on such applications).

It is well-known that Theorem~\ref{t:main} is equivalent to the following theorem (see \cite[Remark 15]{SG:SAI} or \cite[Section 4.3]{Lub})

\begin{thm1'}\label{t:main'}
	Let $\Omega_1$ and $\Omega_2$ be two finite symmetric subsets of $\GL_{n_0}(\bbz[1/q_0])$. For $i=1,2$, let $\Gamma_i$ be the group generated by $\Omega_i$, and $\bbg_i^{\circ}$ be the Zariski-connected component of the Zariski-closure of $\Gamma_i$ in $(\GL_{n_0})_{\bbq}$. Suppose $\Gamma_2\subseteq \Gamma_1$ and the normal closure of $\bbg_2^{\circ}$ in $\bbg_1^{\circ}$ is $\bbg_1^{\circ}$. Then if the family of Cayley graphs $\{{\rm Cay}(\pi_q(\Gamma_2),\pi_q(\Omega_2)\}_{\gcd(q,q_0)=1}$ is a family of expanders, then  $\{{\rm Cay}(\pi_q(\Gamma_1),\pi_q(\Omega_1)\}_{\gcd(q,q_0)=1}$ is a family of expanders.
\end{thm1'}

\begin{definition}
	Let $\Omega$ be a finite symmetric subset of $\GL_{n_0}(\bbz[1/q_0])$. We say that the group $\Gamma=\langle \Omega\rangle$ generated by $\Omega$ has super-approximation with respect to a subset $C$ of positive integers if the family of Cayley graphs $\{{\rm Cay}(\pi_m(\Gamma),\pi_m(\Omega)\}_{m\in C}$ is a family of expanders.
	
	We simply say $\Gamma\subseteq \GL_{n_0}(\bbz[1/q_0])$ has super-approximation if it has super-approximation with respect to $\{q\in \bbz^+|\gcd(q,q_0)=1\}$.
\end{definition}

In the past decade there has been a surge in proving that super-approximation is a Zariski-topological property (see~\cite{BG1}-\cite{BV} and \cite{SG:SAI}-\cite{SGV}). By now we know that
\begin{enumerate}
	\item A finite generated Zariski-dense subgroup of $\SL_{n_0}(\bbz)$ has super-approximation with respect to $\bbz^+$ (see~\cite[Theorem 1]{BV}).
	\item A finitely generated subgroup $\Gamma$ of $\GL_{n_0}(\bbz[1/q_0])$ has super-approximation with respect to
	\[
	\{q^{m_0}|\h q \text{ is a square-free integer }, \gcd(q,q_0)=1\}
	\]
	 if and only if $\bbg^{\circ}=[\bbg^{\circ},\bbg^{\circ}]$ where $\bbg^{\circ}$ is the Zariski-connected component of the Zariski-closure of $\Gamma$. (see~\cite[Theorem 1]{SGV} and \cite[Theorem 1]{SG:SAII}).
	\item  A finitely generated subgroup $\Gamma$ of $\GL_{n_0}(\bbz[1/q_0])$ has super-approximation with respect to
	\[
	\{p^n|\h n\in \bbz^+, p \text{ is a prime which does not divide } q_0\}
	\]
	 if and only if $\bbg^{\circ}=[\bbg^{\circ},\bbg^{\circ}]$ where $\bbg^{\circ}$ is the Zariski-connected component of the Zariski-closure of $\Gamma$. (see~\cite[Theorem 1]{SG:SAII}).
\end{enumerate}
The following is the main conjecture on this subject.
\begin{conjecture}
	 A finitely generated subgroup $\Gamma$ of $\GL_{n_0}(\bbz[1/q_0])$ has super-approximation
	if and only if $\bbg^{\circ}=[\bbg^{\circ},\bbg^{\circ}]$ where $\bbg^{\circ}$ is the Zariski-connected component of the Zariski-closure of $\Gamma$.
\end{conjecture}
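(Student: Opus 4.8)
One implication is already available. If $\Gamma$ has super-approximation then in particular the subfamily of Cayley graphs $\{\Cay(\pi_q(\Gamma),\pi_q(\Omega))\}$ indexed by the prime powers $q=p^n$ with $p\nmid q_0$ is a family of expanders, so $\Gamma$ has super-approximation with respect to $\{p^n\}$, and by the third item of the list above this forces $\bbg^{\circ}=[\bbg^{\circ},\bbg^{\circ}]$ (the case of finite $\Gamma$ being vacuous). Thus the content of the conjecture is the converse, and that is where I would concentrate.

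For the converse, begin with the structural reductions already used for the partial results. Replacing $\Gamma$ by the finite-index subgroup $\Gamma\cap\bbg^{\circ}(\bbq)$, which is Zariski-dense in $\bbg^{\circ}$, and using that super-approximation is inherited by and from finite-index subgroups, we may assume $\Gamma$ is Zariski-dense in a connected perfect group $\bbg$. Analyzing the unipotent radical $R_u(\bbg)$ — which, $\bbg$ being perfect, carries no trivial composition factor as a $\bbg/R_u(\bbg)$-module — one reduces, exactly as in the proofs of items (2) and (3), to controlling the congruence images $G_q$ of a Zariski-dense $\Gamma$ in a semisimple $\bbg$. Now the new task crystallizes: items (2) and (3) already give a \emph{uniform} spectral gap for $G_q$ when $q$ is a prime power and when $q$ is a fixed power of a square-free integer; what is missing is a uniform spectral gap for arbitrary $q$ with mixed prime exponents.

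This local-to-global passage is the step I expect to be the main obstacle. By strong approximation (Matthews--Vaserstein--Weisfeiler, Nori, Pink), for $q=\prod_i p_i^{a_i}$ the group $G_q$ is, up to a bounded-index correction coming from finitely many ``linking'' primes, the direct product $\prod_i G_{p_i^{a_i}}$; hence spectral gap for $G_q$ amounts to a bound $\|\pcal_{\Omega}\ \text{on}\ \bigotimes_i\rho_i\|\le\lambda_0<1$, uniform over all tuples $(\rho_i)_i$ with $\rho_i\in\widehat{G_{p_i^{a_i}}}$ not all trivial and uniform in the primes $p_i$ and the exponents $a_i$. Tensor products of nontrivial local representations need not inherit the local gaps — a priori the gap could degrade with the number of factors or with the spread of the $a_i$ — so a mere superposition of the known results does not suffice. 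I would attack this with a multi-scale Bourgain--Gamburd argument: run the $\ell^2$-flattening together with the escape-from-subvarieties (non-concentration) estimate using a reference structure spread simultaneously across all the scales $p_i^{a_i}$, the engine being a quantitative product theorem in $\prod_i G_{p_i^{a_i}}$ \emph{uniform in the $p_i$ and $a_i$}; approximate subgroups of such a product decompose along the factors (Larsen--Pink / Nori analysis prime by prime, together with a Goursat-type bookkeeping of the linking), and the hope is that this makes the single-prime-power estimates assemble into a product estimate with a scale-independent constant.

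Finally, Theorem~\ref{t:main} of this article supplies a complementary route: it reduces the conjecture for $\Gamma$ with perfect $\bbg^{\circ}$ to the conjecture for \emph{any} finitely generated $\Gamma_2\le\Gamma$ whose Zariski closure normally generates $\bbg^{\circ}$ and which itself has super-approximation with respect to all $q$ coprime to $q_0$. Together with item (1) (Zariski-dense subgroups of $\SL_{n_0}(\bbz)$, Bourgain--Varj\'u) and with arithmetic lattices enjoying property $(\tau)$, Theorem~\ref{t:main} would then propagate the conjecture from a class of ``model'' subgroups to every $\Gamma$ containing a suitable model subgroup. The residual difficulty is to produce such a model subgroup inside an arbitrary $\Gamma\subseteq\GL_{n_0}(\bbz[1/q_0])$ — which need not be conjugate into $\SL_{n_0}(\bbz)$ and need not contain a higher-rank lattice — so that in the end one is led back, once more, to the uniform multi-scale product theorem above.
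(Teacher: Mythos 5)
The statement you have selected is presented in the paper as a \emph{Conjecture} --- it is the authors' formulation of the main open problem in the subject, and the paper neither proves it nor claims to. Items (1)--(3) of the preceding list and Theorem~\ref{t:main} are what the authors put forward as the known partial results. There is therefore no proof in the paper to compare your attempt against.

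What you have written is a research-program sketch rather than a proof, and to your credit you say so. The sound parts: the ``only if'' implication does follow from item (3) by restricting to prime-power moduli, provided one adopts the standing convention that $\Gamma$ is infinite (for finite $\Gamma$, the group $\bbg^{\circ}$ is trivial and hence perfect while no family of expanders can form, so the biconditional degenerates --- calling this case ``vacuous'' is slightly off); the genuinely missing ingredient is indeed a Bourgain--Gamburd argument over $\prod_i G_{p_i^{a_i}}$ with $\ell^2$-flattening and non-concentration constants uniform in the number of prime factors and the spread of exponents, which neither item (2) nor item (3) supplies; and Theorem~\ref{t:main} relocates rather than removes the difficulty, since it still requires exhibiting a model subgroup of $\Gamma$ that already has full super-approximation, which begs the question for a generic $\Gamma\subseteq\GL_{n_0}(\bbz[1/q_0])$. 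One step you gloss over: the passage ``one reduces, exactly as in the proofs of items (2) and (3), to \ldots semisimple $\bbg$'' overstates what those proofs do. Handling the unipotent radical uniformly in $q$ is a substantial and non-modular part of the arguments in \cite{SGV} and \cite{SG:SAII}; there is no clean reduction to the semisimple case, and the perfect-but-not-semisimple case carries its own difficulties (this is part of why the conjecture is open). None of the gaps you identify is filled here, so this is an outline of where a proof might go, not a proof.
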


Relaxing the condition on the set of possible residues is crucial in some of the results where super-approximation is used in combination with {\em large sieve} or {\em thermodynamical} techniques (for instance see~\cite{BK, BKM, MOW}).

Theorem~\ref{t:main} is about inducing super-approximation property from a subgroup with {\em large} Zariski-closure to the group itself. So since we know (infinite) arithmetic groups in semisimple $\bbq$-groups, i.e. $\bbz[1/q_0]$-points in a semisimple $\bbq$-group, have super-approximation, we get the following corollary.

\begin{cor}\label{c:arithmetic}
	Let $\Gamma$ be a finitely generated subgroup of $\GL_{n_0}(\bbz[1/q_0])$. Suppose the Zariski-closure of $\Gamma$ in $(\GL_{n_0})_{\bbq}$ is an almost $\bbq$-simple group. Suppose further that $\Gamma$ contains an infinite arithmetic subgroup of some semisimple group. Then $\Gamma$ has super-approximation.
\end{cor}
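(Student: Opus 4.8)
The plan is to derive this at once from Corollary~\ref{c:SimpleCase}. Set $\Gamma_1:=\Gamma$ and let $\Gamma_2$ be the hypothesized infinite arithmetic subgroup of a semisimple $\bbq$-group. Arithmetic (and $S$-arithmetic) subgroups of semisimple groups are finitely generated, so $\Gamma_2$ is a finitely generated subgroup of $\GL_{n_0}(\bbz[1/q_0])$ with $\Gamma_2\subseteq\Gamma_1$; moreover $\bbg_1^{\circ}$ is almost $\bbq$-simple and $\Gamma_2$ is infinite, both by hypothesis. Thus Corollary~\ref{c:SimpleCase} reduces the statement to the assertion that $\Gamma_2\acts\prod_{p\nmid q_0}\GL_{n_0}(\bbz_p)$ has spectral gap, which, by the equivalence recalled just before Theorem~\ref{t:main'} (see \cite[Remark~15]{SG:SAI}), is the same as saying that $\Gamma_2$ has super-approximation with respect to $\{q\in\bbz^+\mid\gcd(q,q_0)=1\}$.

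For this last point I would invoke the classical fact that arithmetic subgroups of semisimple groups have property~$(\tau)$ with respect to the family of congruence subgroups: in higher rank this is a consequence of Kazhdan's property~$(T)$, for $\SL_2$ it follows from Selberg's $3/16$ theorem and its extensions, and in general it is Clozel's theorem on the $(\tau)$-conjecture, resting on the work of Burger--Sarnak~\cite{BS} and the Jacquet--Langlands correspondence. Since for each $q$ coprime to $q_0$ the kernel of $\pi_q$ meets $\Gamma_2$ in a congruence subgroup, property~$(\tau)$ --- together with the standard comparison between the combinatorial and the analytic spectral gaps, and with strong approximation to describe the finite quotients $\pi_q(\Gamma_2)$ --- gives that $\{\Cay(\pi_q(\Gamma_2),\pi_q(\Omega_2))\}_{\gcd(q,q_0)=1}$ is a family of expanders for one, hence every, finite symmetric generating set $\Omega_2$ of $\Gamma_2$; this is precisely super-approximation for $\Gamma_2$.

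Granting the above, Corollary~\ref{c:SimpleCase} yields that $\Gamma_1=\Gamma$ acts on $\prod_{p\nmid q_0}\GL_{n_0}(\bbz_p)$ with spectral gap, equivalently that $\Gamma$ has super-approximation, as claimed. The passage through Corollary~\ref{c:SimpleCase} is purely formal; the content sits in the middle step, where the point demanding genuine care is not any single estimate but the bookkeeping that converts the classical analytic input into a combinatorial statement usable here --- notably matching the intrinsic congruence structure of $\Gamma_2$ with reduction modulo $q$ in the fixed coordinates of $\GL_{n_0}$ via strong approximation, and carrying out the isogeny and anisotropic-factor reductions internal to property~$(\tau)$, so that what emerges is a clean assertion of super-approximation for $\Gamma_2$ valid for every modulus coprime to $q_0$ rather than merely for prime powers.
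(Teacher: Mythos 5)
Your proof is correct and takes essentially the same route the paper intends: the paper states this corollary without a formal proof, justifying it only in the paragraph immediately preceding it, which points to exactly the two ingredients you use --- the classical fact (property $(\tau)$/Clozel--Burger--Sarnak et al.) that infinite arithmetic subgroups of semisimple $\bbq$-groups have super-approximation, and the induction of spectral gap to $\Gamma$ via Theorem~\ref{t:main} in the form of Corollary~\ref{c:SimpleCase}. Your added remarks about transferring the intrinsic congruence structure of $\Gamma_2$ to the fixed $\GL_{n_0}$ coordinates via strong approximation correctly flag the only place requiring nontrivial bookkeeping, which the paper likewise elides.
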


Using the mentioned result of Bourgain and Varj\'{u}~\cite[Theorem 1]{BV}, we get the following corollary of Theorem~\ref{t:main}.

\begin{cor}\label{c:BV}
	Let $\Gamma$ be a finitely generated subgroup of $\GL_{n_0}(\bbz[1/q_0])$. Suppose the Zariski-closure of $\Gamma$ in $(\GL_{n_0})_{\bbq}$ is an almost $\bbq$-simple group. Suppose further that there is a subgroup $\Lambda\subseteq \Gamma\cap \GL_{n_0}(\bbz)$ whose Zariski-closure is isomorphic to $(\SL_m)_{\bbq}$ for some positive integer $m$. Then $\Gamma$ has super-approximation. 
\end{cor}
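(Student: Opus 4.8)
The plan is to produce inside $\Gamma$ a finitely generated subgroup that acts with spectral gap and whose Zariski closure is a non-trivial connected subgroup of the Zariski closure of $\Gamma$, and then to quote Corollary~\ref{c:SimpleCase}. The only genuinely new ingredient needed is the theorem of Bourgain and Varj\'u for $\SL_m(\bbz)$; the rest is a transfer argument that exploits the fact that, by hypothesis, $\Lambda$ consists of \emph{integer} matrices.

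First I would pass to a finitely generated piece. Let $\bbh$ be the Zariski closure of $\Lambda$, so $\bbh\cong(\SL_m)_\bbq$, and assume $m\ge 2$ (the case $m=1$ being degenerate). A standard maximal-dimension argument produces a finitely generated $\Lambda_0\subseteq\Lambda$ with the same Zariski closure $\bbh$: choose finitely many elements of $\Lambda$ whose Zariski closure has maximal dimension; its identity component is then normalized by all of $\Lambda$, hence is a non-trivial connected normal subgroup of the almost-simple group $\bbh$, hence equals $\bbh$. In particular $\Lambda_0$ is infinite, and since $\SL_m$ has no non-trivial $\bbq$-character, $\Lambda_0\subseteq\SL_{n_0}(\bbz)$.

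Next I would transport the problem to the standard $\SL_m(\bbz)$ and apply \cite[Theorem~1]{BV}. Fix a $\bbq$-isomorphism $\varphi\colon(\SL_m)_\bbq\to\bbh$. Because $\Lambda_0\subseteq\GL_{n_0}(\bbz)$, the group $\bbh\cap\GL_{n_0}(\bbz)$ is a lattice in $\bbh(\bbr)$, so $\varphi^{-1}(\bbh\cap\GL_{n_0}(\bbz))$ is an arithmetic subgroup of $(\SL_m)_\bbq$, hence commensurable, after conjugation by a suitable $g\in\SL_m(\bbq)$, with $\SL_m(\bbz)$; therefore a finite-index subgroup $\Lambda_1$ of $g\,\varphi^{-1}(\Lambda_0)\,g^{-1}$ lies in $\SL_m(\bbz)$ and is finitely generated and Zariski-dense in $\SL_m$. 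By the theorem of Bourgain and Varj\'u, $\Lambda_1$ has super-approximation with respect to $\bbz^+$. Undoing the change of coordinates, the map $\varphi$ composed with conjugation by $g^{-1}$, and its inverse, are defined over $\bbz[1/N]$ for some $N\in\bbz^+$, so writing $\Lambda_2$ for the image of $\Lambda_1$ we get $\Lambda_2\subseteq\bbh\cap\GL_{n_0}(\bbz)\subseteq\Gamma$, and for every $q$ coprime to $N$ the reduction maps identify the relevant congruence quotients of $\Lambda_2$ and $\Lambda_1$; hence $\Lambda_2$ has super-approximation with respect to all $q$ coprime to $N$. To recover the finitely many primes dividing $N$ but not $q_0$, I would use that $\Lambda_2\subseteq\GL_{n_0}(\bbz)$ has semisimple --- hence perfect --- Zariski closure, so by the third of the results recalled above (\cite[Theorem~1]{SG:SAII}) it has super-approximation with respect to all prime powers; gluing this with the previous family over coprime moduli by the standard argument (the congruence quotients of $\Lambda_2$ at coprime levels are essentially $\SL_m$ over coprime finite rings and share no common quotient) yields super-approximation of $\Lambda_2$ with respect to $\{q\in\bbz^+:\gcd(q,q_0)=1\}$, i.e.\ spectral gap for $\Lambda_2\acts\prod_{p\nmid q_0}\GL_{n_0}(\bbz_p)$.

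Finally I would apply Corollary~\ref{c:SimpleCase} with $\Gamma_2:=\Lambda_2$ and $\Gamma_1:=\Gamma$: $\Lambda_2$ is an infinite finitely generated subgroup of $\Gamma\subseteq\GL_{n_0}(\bbz[1/q_0])$ and the Zariski closure of $\Gamma$ is almost $\bbq$-simple by hypothesis, so the corollary gives spectral gap for $\Gamma\acts\prod_{p\nmid q_0}\GL_{n_0}(\bbz_p)$, which is exactly super-approximation for $\Gamma$. The conceptual content lies entirely in \cite[Theorem~1]{BV} and in Theorem~\ref{t:main} (through Corollary~\ref{c:SimpleCase}); I expect the only real work to be the middle bookkeeping --- passing between the given embedding of $\Lambda$ and the standard $\SL_m(\bbz)$, keeping track of which primes are spoiled, and then absorbing them --- which is routine but must be carried out with care.
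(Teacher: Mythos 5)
Your overall strategy matches the paper's: transport $\Lambda$ to a Zariski-dense subgroup of $\SL_m(\bbz)$, apply Bourgain--Varj\'u, transport the spectral gap back, and feed it into Theorem~\ref{t:main} (or its special case Corollary~\ref{c:SimpleCase}). The difference, and the place where your argument has a genuine gap, is the transport-back step.

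You transport back by noting that the $\bbq$-isomorphism $\varphi\circ\mathrm{conj}_{g^{-1}}$ and its inverse are defined over $\bbz[1/N]$ for some $N$, which only gives super-approximation with respect to moduli $q$ with $\gcd(q,N)=1$. To recover the finitely many primes dividing $N$ but not $q_0$ you invoke \cite[Theorem~1]{SG:SAII} for prime powers, and then propose to ``glue over coprime moduli by the standard argument.'' This gluing is \emph{not} standard and is not justified by the fact that the congruence quotients at coprime levels share no common factor. Spectral gap for $\Lambda\acts G_1$ and $\Lambda\acts G_2$ does not formally imply spectral gap for $\Lambda\acts G_1\times G_2$, even when $\Lambda$ is dense in the product: the averaging operator on $L^2(G_1)^\circ\otimes L^2(G_2)^\circ$ is not the tensor product $T_{\Omega,G_1}\otimes T_{\Omega,G_2}$, and controlling it is precisely the sort of nontrivial ``combining'' problem that occupies a substantial part of the super-approximation literature (indeed the main point of Bourgain--Varj\'u \cite{BV} itself is such a combination). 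If you want to use this route you must supply a reference or an argument for the coprime-moduli combination; none appears in your proposal.

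The paper avoids the bad primes entirely. It takes $\rho\colon\bbh\to(\SL_m)_\bbq$ as in your set-up and uses \cite[Lemma~13]{SG:SAII} to find $g\in\SL_m(\bbq)$ with $\rho(\Lambda)\subseteq g\SL_m(\bbz)g^{-1}$ (so no need to pass to a finite-index subgroup). Then, instead of reducing mod $q$, it works with the adelic points: $\rho$ induces a \emph{topological} group isomorphism $\bbh(\bba)\to\SL_m(\bba)$, regardless of the fact that $\rho$ is only defined over $\bbz[1/N]$ as an integral model. Consequently, if $\overline\Lambda$ is the closure of $\Lambda$ in $\prod_p\GL_{n_0}(\bbz_p)$, then $g^{-1}\rho(\overline\Lambda)g$ is exactly the closure of $g^{-1}\rho(\Lambda)g$ in $\prod_p\SL_m(\bbz_p)$; hence spectral gap for $g^{-1}\rho(\Lambda)g\acts\prod_p\SL_m(\bbz_p)$ (from \cite[Theorem~1]{BV}) transfers verbatim to $\Lambda\acts\overline\Lambda$, i.e.\ $\Lambda\acts\prod_p\GL_{n_0}(\bbz_p)$ has spectral gap with no spoiled primes to absorb. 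Then Theorem~\ref{t:main} (together with \cite[Remark~15]{SG:SAI}) finishes the proof. You should replace the mod-$q$ bookkeeping and the gluing step with this adelic-isomorphism observation; the rest of your argument is fine.
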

\begin{proof}
	Let $\bbh$ be the Zariski-closure of $\Lambda$ in $(\GL_{n_0})_{\bbq}$. By the assumption, there is a $\bbq$-isomorphism $\rho:\bbh\rightarrow (\SL_m)_{\bbq}$. By \cite[Lemma 13]{SG:SAII}, there is $g\in \SL_m(\bbq)$ such that $\rho(\Lambda)\subseteq g\SL_m(\bbz)g^{-1}$. And so $g^{-1}\rho(\Lambda)g$ is a finitely generated Zariski-dense subgroup of $\SL_m(\bbz)$. Hence by \cite[Theorem 1]{BV} and \cite[Remark 15]{SG:SAI} we have that $g^{-1}\rho(\Lambda)g \acts \prod_p \SL_m(\bbz_p)$ has spectral gap. 
	
	Now let $\overline{\Lambda}$ be the closure of $\Lambda$ in $\prod_p \GL_{n_0}(\bbz_p)$, and let $\bba$ be the ring of adeles of $\bbq$. Since $\rho$ induces a topological isomorphism $\rho:\bbh(\bba)\rightarrow \SL_m(\bba)$, $g^{-1}\rho(\overline{\Lambda})g$ is the closure of $g^{-1}\rho(\Lambda)g$ in $\prod_p \SL_m(\bbz_p)$. Since $g^{-1}\rho(\Lambda)g \acts \prod_p \SL_m(\bbz_p)$ has spectral gap, we get that $\rho(\Lambda)\acts \rho(\overline{\Lambda})$ has spectral gap. And so $\Lambda\acts \overline{\Lambda}$ has spectral gap. Therefore by Theorem~\ref{t:main} and \cite[Remark 15]{SG:SAI} the claim follows.
\end{proof}

This kind of result was first obtained by Varj\'{u} in the appendix of~\cite{BK} where he proved a special case of Corollary~\ref{c:arithmetic} for the group of symmetries of an Apollonian packing. More recently in \cite[Theorem 1.3]{FSZ} a special case of Corollary~\ref{c:BV} is proved where the Zariski-closure of $\Gamma$ is assumed to be isomorphic to the restriction of scalars $R_{k/\bbq}((\SL_2)_{\bbq})$ of $(\SL_2)_{\bbq}$ for a finite extension $k/\bbq$. 
 
\subsection{Outline of the proof.} Here is an outline of the main ideas of the proof. 

{\bf Step 0. (Initial preparation)} Preliminary reductions: it is showed that one can essentially work under the extra assumptions that the Zariski-closure $\bbg_i$ of $\Gamma_i$ in $(\GL_{n_0})_{\bbq}$ is Zariski-connected for $i=1,2$; and $\bbg_1$ is simply-connected (see Section~\ref{s:FinalProof}).
  
{\bf Step 1. (Reduction to an Adelic Bounded Generation)}  Using Varj\'{u}'s Lemma~\cite[Lemma A.2]{BK} and \cite[Lemma 16]{SG:SAI}, the proof of Theorem~\ref{t:main} is reduced to an adelic bounded generation statement (see Theorem~\ref{t:BGAdelic}). The following is a variant of \cite[Lemma A.2]{BK}.
\begin{lem}[Varj\'{u}'s Lemma]
	Suppose $G$ is a finite group, $H$ is a subgroup of $G$, and 
	\[
		G=g_1Hg_1^{-1}\cdot g_2Hg_2^{-1}\cdots g_nHg_n^{-1}
	\]
   for some $g_i\in G$. Let $\Omega$ be a symmetric generating set of $H$. Then
	\[
	\lambda(\pcal_{\Omega'};G)\le f(|\Omega|,\lambda(\pcal_{\Omega};H),n)<1,
	\]
	where $\Omega':=\bigcup_{i=1}^n g_i\Omega g_i^{-1}$ and  $f:\bbz^+\times [0,1] \times \bbz^+ \rightarrow [0,1)$.
\end{lem}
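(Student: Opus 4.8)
The plan is to peel the statement apart with two soft steps and then settle the remaining core by an elementary approximate‑invariance argument. Throughout, write $K_i:=g_iHg_i^{-1}$, so the hypothesis is exactly $G=K_1K_2\cdots K_n$ as a product of subgroups; put $\lambda:=\lambda(\pcal_{\Omega};H)$ and $V:=L^2(G)^{\circ}$. (As usual we work over $\bbr$, which does not change any operator norm.)

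\emph{Step 1: reduce to one well‑understood operator.} Since $\tfrac1n\nu\le\pcal_{\Omega'}$ pointwise, where $\nu:=\tfrac1n\sum_{i=1}^{n}\pcal_{g_i\Omega g_i^{-1}}$, one can write $\pcal_{\Omega'}=\tfrac1n\nu+\bigl(1-\tfrac1n\bigr)\rho$ for a symmetric probability measure $\rho$ on $G$; as all three convolution operators have norm $\le 1$ on $V$, this gives $\lambda(\pcal_{\Omega'};G)\le\tfrac1n\|T_{\nu}|_{V}\|+1-\tfrac1n$, so it suffices to bound $\|T_{\nu}|_{V}\|$. Decomposing $L^2(G)$ into the $L^2$‑spaces of the orbits of the left translation action of $K_i$ on $G$, and noting that conjugation by $g_i$ identifies the $\pcal_{g_i\Omega g_i^{-1}}$‑averaging operator on $L^2(K_i)$ with the $\pcal_{\Omega}$‑averaging operator on $L^2(H)$, one sees that $T_i:=T_{g_i\Omega g_i^{-1}}$, restricted to $V$, has $1$‑eigenspace equal to $V_i:=\{f\in V:\ f\text{ is left }K_i\text{-invariant}\}$ and all remaining eigenvalues in $[-\lambda,\lambda]$. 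In particular $\lambda_{\min}(T_{\nu}|_V)\ge-\lambda$, so only an upper bound on $\langle T_{\nu}v,v\rangle$ for unit $v\in V$ is needed.

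\emph{Step 2: approximate invariance and bounded generation.} Fix a unit vector $v\in V$ and set $\delta:=\tfrac12\bigl(1-\langle T_{\nu}v,v\rangle\bigr)\ge0$. Since $\langle T_iv,v\rangle\le1$ and their average is $1-2\delta$, we get $\langle T_iv,v\rangle\ge1-2n\delta$ for every $i$; and because $I-T_i\ge(1-\lambda)(I-P_i)$ on $V$ (both sides vanish on $V_i$, while on $V_i^{\perp}$ one has $I-T_i\ge(1-\lambda)I$), where $P_i$ is the orthogonal projection onto $V_i$, this yields $\|v-P_iv\|^2=\langle(I-P_i)v,v\rangle\le\tfrac{2n\delta}{1-\lambda}=:\eta$. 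Now $P_i=\tfrac1{|K_i|}\sum_{k\in K_i}L_k$, and for any subgroup $K$ the orthogonal projection $P_K:=\tfrac1{|K|}\sum_{k\in K}L_k$ obeys the identity $\|w-P_Kw\|^2=\tfrac1{2|K|}\sum_{k\in K}\|L_kw-w\|^2$; hence $\tfrac1{|K_i|}\sum_{k\in K_i}\|L_kv-v\|^2\le2\eta$. The one genuine idea now enters: by Markov's inequality the set $S_i:=\{k\in K_i:\ \|L_kv-v\|^2\le8\eta\}$ is symmetric, contains $e$, and has more than half the elements of $K_i$, so $S_iS_i=K_i$ and therefore $\|L_kv-v\|\le6\sqrt{\eta}$ for \emph{every} $k\in K_i$ and every $i$. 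Writing an arbitrary $g\in G$ as $g=k_1k_2\cdots k_n$ with $k_j\in K_j$ — possible precisely because $G=K_1\cdots K_n$ — and telescoping along this word gives $\|L_gv-v\|\le6n\sqrt{\eta}$ for all $g\in G$. Applying the displayed identity once more with $K=G$, whose projection $P_G$ annihilates $v$, we obtain $1=\|v\|^2=\tfrac1{2|G|}\sum_{g\in G}\|L_gv-v\|^2\le18n^2\eta$, i.e. $\eta\ge\tfrac1{18n^2}$. Unwinding, $\delta\ge\tfrac{1-\lambda}{36n^3}$, hence $\|T_{\nu}|_V\|\le1-\tfrac{1-\lambda}{18n^3}$ (using also $\lambda_{\min}(T_{\nu}|_V)\ge-\lambda$), and finally $\lambda(\pcal_{\Omega'};G)\le1-\tfrac{1-\lambda}{18n^4}$; so one may take $f(t,\lambda,n):=1-\tfrac{1-\lambda}{18n^4}$, which in fact does not depend on $t=|\Omega|$.

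\emph{Where the difficulty lies.} Everything except the Markov step is routine: the first reduction is convexity of operator norms, the spectral description of $T_i|_V$ is bookkeeping, and propagating approximate invariance from the $K_i$ to all of $G$ is mere telescoping along the bounded‑generation word. The delicate point is upgrading ``$v$ is close to $L_kv$ on average over $k\in K_i$'' to ``$v$ is close to $L_kv$ for \emph{every} $k\in K_i$'' with a constant that does not see $|K_i|$ or $|G|$; the fact that a symmetric subset of a finite group of density $>\tfrac12$ squares onto the group is exactly what supplies this, and it is what keeps the final bound uniform in $G$ — the feature that makes the lemma usable for expander families. (The statement is read with $\lambda(\pcal_{\Omega};H)<1$, which holds in all intended applications and is automatic once $e\in\Omega$; if the $\Omega$‑walk on $H$ is periodic the conclusion can genuinely fail.)
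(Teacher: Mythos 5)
The paper does not actually prove this lemma; it is stated as a variant of \cite[Lemma A.2]{BK} (Varj\'u's appendix) and cited rather than derived. So there is no in-paper proof to compare against, and what you have produced is a self-contained argument for the finite version. I checked it line by line and it is correct: the pointwise bound $\tfrac1n\nu\le\pcal_{\Omega'}$ (using $|\Omega'|\le n|\Omega|$) and the resulting convex split are fine; the spectral description of $T_i|_V$ via the coset decomposition and conjugation by $g_i$ is exactly right, and gives both the $1$-eigenspace $V_i$ and the bound $\lambda_{\min}(T_\nu|_V)\ge-\lambda$; the identities $\|w-P_Kw\|^2=\|w\|^2-\langle P_Kw,w\rangle=\tfrac1{2|K|}\sum_k\|L_kw-w\|^2$ are correct; the Markov/density-$>\tfrac12$-squares-onto-the-group step is sound and yields $\|L_kv-v\|\le4\sqrt2\sqrt\eta\le6\sqrt\eta$ for every $k\in K_i$; the telescope along a bounded-generation word gives $\|L_gv-v\|\le6n\sqrt\eta$; and $P_Gv=0$ closes the loop, giving $\eta\ge\tfrac1{18n^2}$ and hence the stated $f$. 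Your approach is the same approximate-invariance-through-bounded-generation argument that underlies Varj\'u's proof; the density trick for upgrading ``close on average over $K_i$'' to ``close for all of $K_i$'' is the one genuinely clever move and you execute it cleanly. Two small remarks. First, your final $f$ does not use $|\Omega|$ at all, which is a (legitimate) strengthening of what the lemma promises. Second, your closing caveat is warranted: as literally written, $f:\bbz^+\times[0,1]\times\bbz^+\to[0,1)$ cannot hold when $\lambda(\pcal_{\Omega};H)=1$ (take $n=1$, $g_1=e$, $H=G$, $\Omega=\{h\}$ with $h$ generating a bipartite Cayley graph), so the statement must be read with $\lambda(\pcal_\Omega;H)<1$, as you note; with that reading your $f(t,\lambda,n)=1-\tfrac{1-\lambda}{18n^4}$ does the job.
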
 
By \cite[Lemma 16]{SG:SAI}, it is enough to get a spectral gap for a subgroup of finite index. Hence altogether it is enough to prove the following (see Theorem~\ref{t:BGAdelic}):

{\bf Adelic Bounded Generation:} {\em there are $\gamma_1,\ldots,\gamma_m\in \Gamma_1$ such that $\gamma_1\wh{\Gamma}_2\gamma_1^{-1}\cdots \gamma_m\wh{\Gamma}_2\gamma_m^{-1}$ is an open subgroup of $\wh{\Gamma}_1$ where $\wh{\Gamma}_i$ is the closure of $\Gamma_i$ in $\prod_{p\nmid q_0}\GL_{n_0}(\bbz_p)$.}

To get the above mentioned Adelic Bounded Generation result, we prove many bounded generation results from various angles: Lie algebras; Zariski topology; and $p$-adic topology (with certain uniformity on $p$). Here is a bit more detailed description of these steps.

{\bf Step 2. (Generating the Lie algebra)}  We prove that $\Lie(\bbg_2)(\bbq)$ generates $\Lie(\bbg_1)(\bbq)$ as a $\Gamma_1$-module. 

{\bf Step 3. (Bounded Generation: Zariski topology)} The infinitesimal result proved in the previous step shows that $(g_1,\ldots,g_n)\mapsto \gamma_1g_1\gamma_1^{-1}\cdots \gamma_ng_n\gamma_ng^{-1}$ is a geometrically dominant morphism from $\bbg_2\times \cdots \times \bbg_2$ to $\bbg_1$ for suitable $\gamma_i$'s in $\Gamma_1$. Looking at the scheme theoretic closure of $\Gamma_i$, we deduce that for almost all the geometric fibers $\bbg_i^{(p)}$ we still get dominant morphisms (see Proposition~\ref{p:BGZariskiTopology}).

{\bf Step 4. (Bounded Generation: $p$-adic topology)} Based on a quantitative open function theorem for $p$-adic analytic functions proved in~\cite[Lemma 45']{SG:SAII}, we show the following $p$-adic topological bounded generation with certain uniformity on $p$ (see Proposition~\ref{p:LargeCongruence}):

{\em there are a positive integer $N$ and $\gamma_1,\ldots,\gamma_n\in\Gamma_1$ such that 
$\Gamma_{1,p}[p^N]\subseteq \gamma_1\Gamma_{2,p}\gamma_1^{-1}\cdots \gamma_n\Gamma_{2,p}\gamma_n^{-1}$ where $\Gamma_{i,p}$ is the closure of $\Gamma_i$ in $\GL_{n_0}(\bbz_p)$ and $\Gamma_{1,p}[p^N]:=\Gamma_{1,p}\cap 1+p^N\gl_{n_0}(\bbz_p)$.
}
  
{\bf Step 5. (Bounded Generation: modulo $p$, for large primes)} Step 3 gave us certain dominant morphisms. A result of Pink and R\"{u}tsche~\cite[Proposition 2.5]{PR} helps us to deduce that the image of these morphisms applied to the $\f_p$-points of the underlying varieties is {\em large} \footnote{In this note $\f_q$ denotes the finite field with $q$ elements.}. And so by a result of Gowers~\cite{Gow} (also see~\cite[Corollary 1]{NP}) we get that the three fold multiple of the image of such a morphism is the entire $\f_p$-points of the considered group. So altogether using Nori's strong approximation~\cite[Theorem 5.4]{Nor} we get the following (see Lemma~\ref{l:BGModP})

{\em there are $\gamma_1,\ldots,\gamma_n\in \Gamma_1$ such that for large enough $p$ we have $\pi_p(\gamma_1\Gamma_{2,p}\gamma_1^{-1}\ldots\gamma_n\Gamma_{2,p}\gamma_n^{-1})=\pi_p(\Gamma_{1,p})$, where $\pi_p$ is the group homomorphism induced by the quotient map $\bbz_p\rightarrow \f_p$.}

{\bf Step 6. (Proving the Adelic Bounded Generation)} Using the truncated logarithmic maps (see Lemma~\ref{l:ThekthGrade}), Step 2, taking multiple commutators, and Step 5, we can generate the first $N$ {\em $p$-adic layers} of $\Gamma_{1,p}$ for large enough $p$. This result together with Step 4 give us the Adelic Bounded Generation.
 
\section*{Acknowledgment}
The first author would like to thank Amir Mohammadi for many valuable conversations. The second author is grateful to Alex Kontorovich for useful discussion on this topic. Authors would like to thank Alex Kontorovich,  Alex Lubotzky, Amir Mohammadi, Peter Sarnak, and Peter Varj\'{u} for their comments on the earlier version of this note. 

\section{Preliminary results.}
In this section, we gather some of the needed well-known results and adapt them to our setting. 

\subsection{Recalling basic analytic properties of $\bbq_p$-points of an algebraic group.}\label{s:AlgebraicAnalytic}

For the convenience of the reader, in this section some of the well-known analytic properties of the $\bbq_p$-points $\bbg(\bbq_p)$ of a linear algebraic $\bbq_p$-group $\bbg\subseteq (\GL_{n_0})_{\bbq_p}$ is recalled. 
For instance, we will present a detailed argument of why the logarithmic function induces bijection between $G_c:=\bbg(\bbq_p)\cap (I+p^c\gl_{n_0}(\bbz_p))$ and $\Lie(\bbg)(\bbq_p)\cap p^c\gl_{n_0}(\bbz_p)$ if $c$ is a positive integer and $c>1$ when $p=2$. 

We start by summarizing the basic properties of the logarithmic and the exponential functions. In what follows $c_0$ is 1 if $p$ is odd and it is 2 if $p=2$. For any prime $p$, the exponential function 
\[
\exp:p^{c_0}\gl_{n_0}(\bbz_p)\rightarrow \GL_{n_0}(\bbz_p)[p^{c_0}],
\h
\exp(x):=\sum_{i=0}^{\infty}x^i/i!
\]
and the logarithmic function 
\[
\log:\GL_{n_0}(\bbz_p)[p^{c_0}]\rightarrow p^{c_0} \gl_{n_0}(\bbz_p),
\h
\log g:=-\sum_{i=1}^{\infty}(I-g)^i/i
\]
are well-defined analytic functions, and inverse of each other, where 
\[
\GL_{n_0}(\bbz_p)[p^{c_0}]:=\ker(\GL_{n_0}(\bbz_p)\xrightarrow{\pi_{p^{c_0}}}\GL_{n_0}(\bbz/p^{c_0}\bbz));
\]
 moreover we have 
\[
\|\exp(x)-I\|_p=\|x\|_p,\h \text{and}\h \|\log(g)\|_p=\|g-I\|_p,
\]
for any $x\in p^{c_0}\gl_{n_0}(\bbz_p)$ and $g\in \GL_{n_0}(\bbz_p)[p^{c_0}]$. Therefore for any integer $c\ge c_0$ the exponential and the logarithmic functions induce bijections between $\GL_{n_0}(\bbz_p)[p^c]$ and $p^c\gl_{n_0}(\bbz_p)$. 

The next lemma is an easy corollary of the chain rule, but a direct detailed argument is presented. 

\begin{lem}\label{l:AnalyticAlgebraic}
	Let $p$ be a prime, and  $x\in p^{c_0}\gl_{n_0}(\bbz_p)$ where $c_0=2$ when $p=2$ is odd and $c_0=1$ otherwise. Suppose $f\in \bbq[X_{ij}]$ is a polynomial on the entries of $n_0$-by-$n_0$ matrices. Then 
	\[
	\lim_{n\rightarrow \infty}\frac{f(\exp(p^n x))-f(I)}{p^n}=(df)_I(x),
	\]	 
	where 	
	\[
	(df)_I(Y_{ij}):=\sum_{ij}\frac{\partial f}{\partial X_{ij}}(I)Y_{ij}.
	\]
\end{lem}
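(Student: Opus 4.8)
\textbf{Proof proposal for Lemma~\ref{l:AnalyticAlgebraic}.}

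The plan is to reduce the statement to the single-variable case by a direct power-series computation, exploiting that $\exp(p^n x) = I + p^n x + O(p^{2n})$ in the $p$-adic metric. First I would note that for $x\in p^{c_0}\gl_{n_0}(\bbz_p)$ the estimate $\|\exp(p^n x) - I - p^n x\|_p \le \|p^n x\|_p^2 \cdot \|x\|_p^{-1}$ (or more crudely $\le p^{-2n+C}$ for a constant $C$ depending only on $\|x\|_p$ and $c_0$) holds, since the $i\ge 2$ terms of $\sum_{i\ge 0}(p^n x)^i/i!$ have $p$-adic norm at most $\|p^n x\|_p^2 = p^{-2n}\|x\|_p^2$ — here one uses the standard bound $\|1/i!\|_p \le p^{(i-1)/(p-1)} \le \|x\|_p^{-(i-2)}$ for the given range of $x$, which is exactly why the threshold $c_0$ enters. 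Writing $E_n := \exp(p^n x) - I$, we thus have $E_n = p^n x + r_n$ with $\|r_n\|_p \to 0$ faster than $p^{-n}$, in fact $\|r_n\|_p/p^n \to 0$.

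Next I would expand $f(I + E_n) - f(I)$ in the entries. Since $f\in\bbq[X_{ij}]$ is a polynomial, the multivariate Taylor expansion around $I$ is a \emph{finite} sum
\[
f(I+E_n) - f(I) = (df)_I(E_n) + Q(E_n),
\]
where $(df)_I(E_n) = \sum_{ij} \frac{\partial f}{\partial X_{ij}}(I)\,(E_n)_{ij}$ is the linear part and $Q$ collects all monomials of degree $\ge 2$ in the entries of $E_n$, with coefficients in $\bbq$ that are fixed (independent of $n$). For $n$ large enough that all these coefficients and all entries of $E_n$ lie in $\bbz_p$, we get $\|Q(E_n)\|_p \le C' \|E_n\|_p^2 \le C'' p^{-2n}$. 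Dividing by $p^n$: the quadratic-and-higher part contributes $\|Q(E_n)\|_p/p^n \le C'' p^{-n} \to 0$, while the linear part gives
\[
\frac{(df)_I(E_n)}{p^n} = \frac{(df)_I(p^n x + r_n)}{p^n} = (df)_I(x) + \frac{(df)_I(r_n)}{p^n},
\]
and the last term tends to $0$ because $(df)_I$ is a fixed linear functional with $p$-adic-bounded coefficients and $\|r_n\|_p/p^n\to 0$. Combining, $\bigl(f(\exp(p^n x)) - f(I)\bigr)/p^n \to (df)_I(x)$, as claimed.

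The only genuinely delicate point is the convergence and size control of the exponential series, i.e. justifying $\|E_n - p^n x\|_p = o(p^{-n})$; this rests on the standard $p$-adic estimate for $v_p(i!)$ and is precisely what forces the hypothesis $x\in p^{c_0}\gl_{n_0}(\bbz_p)$ with $c_0=2$ at $p=2$, so that $\exp$ converges and the tail of the series is genuinely higher order. Everything else — the finiteness of the polynomial Taylor expansion, the bound on $Q$, and the passage to the limit — is elementary, since $f$ being a polynomial means there are no analytic convergence issues on the $f$ side and all constants are uniform in $n$. (One remark on the statement: the hypothesis reads ``$c_0=2$ when $p=2$ is odd'', which is presumably a typo for ``$c_0=2$ when $p=2$''; the proof uses it in the latter form.)
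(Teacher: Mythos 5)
Your argument is correct and follows essentially the same route as the paper: expand the polynomial $f$ in a finite Taylor series around $I$, isolate the linear part $(df)_I$, bound the degree-$\ge 2$ remainder by $\|E_n\|_p^2 = O(p^{-2n})$, and separately bound $(df)_I(E_n - p^n x)/p^n$ using the tail estimate for $\exp$. The only cosmetic difference is that the paper first clears denominators to make $f$ have integer coefficients, which yields the clean quantitative bound $\|p^n\|_p$ on the error, whereas you absorb the coefficient norms into unnamed constants $C', C''$ — both are fine, and your observation that ``when $p=2$ is odd'' is a typo for ``when $p=2$'' is also correct.
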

\begin{proof}
	After multiplying $f$ by a suitable non-zero integer, we can and will assume that $f$ has integer coefficients. To make the symbols a bit more clear, we view $f$ as a polynomial of $n_0^2$ variables $T_i$. Here $J$ denotes a multi-index, i.e. $J=(j_1,\ldots,j_{n_0^2})$ where $j_i$ are non-negative integers. We denote the symbolic higher-order partial derivatives of a polynomial $f$ by $\partial_J f$. For a multi-index $J$, let ${\bf T}^{J}$ be the monomial $\prod_i T_i^{j_i}$. By Taylor expansion we have
	\be\label{e:Taylor}
	f(I+[T_i])=\sum_J \partial_J f(I) {\bf T}^J=f(I)+(df)_I(T_i)+\sum_{J,\|J\|_1>1}\partial_J f(I) {\bf T}^J,
	\ee
	where $[T_i]$ is the $n_0$-by-$n_0$ matrix whose $ij$-entry is $T_{n_0(i-1)+j}$ and $\|J\|_1:=\sum_i |j_i|$. For $x\in p^{c_0}\gl_{n_0}(\bbz_p)$ and a positive integer $n$, let $t_i:=T_i(\exp(p^n x)-I)$ be the $i$-th component of $\exp(p^n x)-I$ in the above ordering. So 
	\be\label{e:1}
	\left\|\frac{f(\exp(p^n x))-f(I)-(df)_I(t_i)}{p^n}\right\|_p=
	\left\|\frac{\sum_{J,\|J\|_1>1} \partial_Jf(I) {\bf t}^J}{p^n}\right\|_p\le \frac{\|\exp(p^n x)-I\|_p^{2}}{\|p^n\|_p}=\frac{\|p^nx\|_p^{2}}{\|p^n\|_p}\le \|p^n\|_p.
	\ee  
	On the other hand, $t_i=\sum_{k=1}^{\infty} (p^{kn}/k!)T_i(x^k)$ where $T_i(\cdot)$ is the function which gives the $i$-th component of a matrix in the above ordering. Thus we get
	\be\label{e:2}
	\left\|\frac{(df)_I(t_i)}{p^n}-(df)_I(x)\right\|_p=\|\sum_{k=2}^{\infty}(p^{(k-1)n}/k!)(df)_I(x^k)\|_p\le \|p^n\|_p.
	\ee
	Hence by (\ref{e:1}) and (\ref{e:2}) we get 
	\[
	\left\|\frac{f(\exp(p^n x))-f(I)}{p^n}-(df)_I(x)\right\|_p\le \|p^n\|_p,
	\]
	which implies our claim.
\end{proof}
\begin{cor}\label{l:LogInLie}
	Let $\bbg\subseteq (\GL_{n_0})_{\bbq_p}$ be a given embedding of a Zariski-connected $\bbq_p$-group $\bbg$. Then for any positive integer $c$ which is more than 1 for $p=2$, the logarithmic function
	\[
	\log: \bbg(\bbq_p)\cap \GL_{n_0}(\bbz_p)[p^c]\rightarrow \Lie(\bbg)(\bbq_p)\cap p^c\gl_{n_0}(\bbz_p)
	\]
	is a well-defined injection.  
\end{cor}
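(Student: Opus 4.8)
The plan is to reduce the corollary to the previously established facts about $\exp$ and $\log$ on $\GL_{n_0}(\bbz_p)$ together with the observation that $\Lie(\bbg)$ can be detected analytically. Concretely, I would first recall from the discussion immediately preceding the lemma that, for any integer $c\ge c_0$, the logarithmic function $\log\colon \GL_{n_0}(\bbz_p)[p^c]\to p^c\gl_{n_0}(\bbz_p)$ is a well-defined bijection (with inverse $\exp$) satisfying $\|\log g\|_p=\|g-I\|_p$. Since the hypothesis is that $c>1$ when $p=2$, i.e. $c\ge c_0$ in all cases, $\log$ is already a well-defined injection on all of $\GL_{n_0}(\bbz_p)[p^c]$, hence a fortiori on the subset $\bbg(\bbq_p)\cap \GL_{n_0}(\bbz_p)[p^c]$. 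So the only real content is the inclusion
\[
\log\bigl(\bbg(\bbq_p)\cap \GL_{n_0}(\bbz_p)[p^c]\bigr)\subseteq \Lie(\bbg)(\bbq_p).
\]

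To prove this inclusion I would fix $g\in \bbg(\bbq_p)\cap \GL_{n_0}(\bbz_p)[p^c]$ and set $x:=\log g\in p^c\gl_{n_0}(\bbz_p)\subseteq p^{c_0}\gl_{n_0}(\bbz_p)$, so that $g=\exp(x)$. The Lie algebra $\Lie(\bbg)(\bbq_p)$, viewed inside $\gl_{n_0}(\bbq_p)$ via the given embedding $\bbg\subseteq (\GL_{n_0})_{\bbq_p}$, is the zero set of the linear forms $\{(df)_I : f\in I(\bbg)\}$, where $I(\bbg)\subseteq \bbq_p[X_{ij}]$ is the (defining) ideal of $\bbg$; equivalently $\Lie(\bbg)(\bbq_p)=\bigcap_{f\in I(\bbg)}\ker (df)_I$. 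Thus it suffices to show $(df)_I(x)=0$ for every $f$ in the ideal of $\bbg$. For such an $f$ we have $f(\exp(p^n x))=0$ for all positive integers $n$, because $\exp(p^nx)=g^{p^n}$ — here I use that $\exp$ and $\log$ are mutually inverse bijections on these congruence subgroups, so $\exp(p^n x)=\exp(p^n\log g)=g^{p^n}$ — and $\bbg(\bbq_p)$ is a group, hence closed under taking powers, so $g^{p^n}\in\bbg(\bbq_p)$ and $f$ vanishes on it. Consequently $f(\exp(p^nx))-f(I)=0-0=0$ for all $n$ (note $I\in\bbg(\bbq_p)$, so $f(I)=0$ too), and Lemma~\ref{l:AnalyticAlgebraic} gives
\[
(df)_I(x)=\lim_{n\to\infty}\frac{f(\exp(p^n x))-f(I)}{p^n}=0.
\]

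Since this holds for every $f$ in the ideal of $\bbg$, we conclude $x=\log g\in\Lie(\bbg)(\bbq_p)$, and since also $x\in p^c\gl_{n_0}(\bbz_p)$ by construction, $x\in \Lie(\bbg)(\bbq_p)\cap p^c\gl_{n_0}(\bbz_p)$, which is exactly the claimed containment. Combined with the injectivity of $\log$ on $\GL_{n_0}(\bbz_p)[p^c]\supseteq \bbg(\bbq_p)\cap\GL_{n_0}(\bbz_p)[p^c]$ noted above, this proves that $\log$ restricts to a well-defined injection $\bbg(\bbq_p)\cap\GL_{n_0}(\bbz_p)[p^c]\to\Lie(\bbg)(\bbq_p)\cap p^c\gl_{n_0}(\bbz_p)$.

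I do not expect any serious obstacle here: the corollary is essentially a packaging of Lemma~\ref{l:AnalyticAlgebraic} plus the elementary input that algebraic groups are closed under powers and that $\Lie(\bbg)$ is cut out by the differentials of the defining ideal. The one point requiring a little care is making sure the hypotheses of Lemma~\ref{l:AnalyticAlgebraic} are met, namely that $x\in p^{c_0}\gl_{n_0}(\bbz_p)$ so that $\exp(p^nx)$ is legitimately in the domain of all the relevant analytic functions; this is immediate from $c\ge c_0$. (One may note that Zariski-connectedness of $\bbg$ is not actually used in the argument as written; it is presumably stated for consistency with the later applications, and because over $\bbq_p$ one wants $\bbg(\bbq_p)$ to be the $\bbq_p$-points of a connected group so that these congruence subgroups behave well.)
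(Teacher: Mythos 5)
Your argument is correct and is essentially the paper's proof: both reduce to the containment $\log(\bbg(\bbq_p)\cap\GL_{n_0}(\bbz_p)[p^c])\subseteq\Lie(\bbg)(\bbq_p)$ and establish it by applying Lemma~\ref{l:AnalyticAlgebraic} to defining equations $f$ of $\bbg$ together with the observation $\exp(p^n\log g)=g^{p^n}\in\bbg(\bbq_p)$. The only divergence is that the paper first passes to the closed embedding $\GL_{n_0}\hookrightarrow\SL_{n_0+1}$, $g\mapsto\diag(g,(\det g)^{-1})$, so that $\bbg$ is cut out in affine space by honest polynomials as Lemma~\ref{l:AnalyticAlgebraic} requires; you instead take the ideal of polynomials in $\bbq_p[X_{ij}]$ vanishing on $\bbg$, which tacitly identifies $\Lie(\bbg)(\bbq_p)$ with the tangent space at $I$ of the Zariski closure of $\bbg$ in $\mathbb{A}^{n_0^2}$ --- legitimate since tangent spaces are local and $\bbg$ is open in that closure, but worth a word of justification.
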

\begin{proof}
	We already know that $\log:\GL_{n_0}(\bbz_p)[p^c]\rightarrow p^c\gl_{n_0}(\bbz_p)$ is a bijection. So it is enough to show that for any $g\in G_c:=\bbg(\bbq_p)\cap \GL_{n_0}(\bbz_p)[p^c]$ we have $\log g\in \Lie(\bbg)(\bbq_p)$. 
	
	Next we notice that the natural embedding $g\mapsto {\rm diag}(g,(\det g)^{-1})$ of $(\GL_{n_0})_{\bbz_p}$ into $(\SL_{n_0+1})_{\bbz_p}$ sends $\GL_{n_0}(\bbz_p)[p^c]$ to $\SL_{n_0+1}(\bbz_p)[p^c]$ and commutes with the logarithmic function. So we can and will consider $\bbg$ as a subgroup of $(\SL_{n_0+1})_{\bbq_p}$. So $\bbg$ as a closed subset of $(\SL_{n_0+1})_{\bbq_p}$ is given by relations $f_i$, where $f_i$ are polynomials on the entries of $(n_0+1)$-by-$(n_0+1)$ matrices. Hence
	\[
	\Lie(\bbg)(\bbq_p)=\{x\in \gl_{n_0+1}(\bbq_p)|\h (df_i)_I(x)=0\}.
	\]
	By Lemma~\ref{l:LogInLie}, we have
	\[
	(df_i)_I(\log g)=\lim_{n\rightarrow \infty}\frac{f_i(\exp(p^n \log g))-f_i(I)}{p^n}=\lim_{n\rightarrow \infty}\frac{f_i(g^{p^n})-f_i(I)}{p^n}=0,
	\]
	which implies $\log g\in \Lie(\bbg)(\bbq_p)$.
\end{proof}
\begin{prop}\label{p:AnalyticIsAlgebraic}
	Let $\bbg\subseteq (\GL_{n_0})_{\bbq_p}$ be a given embedding of a Zariski-connected $\bbq_p$-group $\bbg$. Then for a positive integer $c$, which is at least $2$ if $p=2$, the logarithmic function
		\[
	\log: \bbg(\bbq_p)\cap \GL_{n_0}(\bbz_p)[p^c]\rightarrow \Lie(\bbg)(\bbq_p)\cap p^c\gl_{n_0}(\bbz_p)
	\]
	is a bijection; and so the restriction of the exponential function is its inverse. 
\end{prop}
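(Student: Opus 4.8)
The plan is to upgrade Corollary~\ref{l:LogInLie} from an injection to a bijection by exhibiting an explicit inverse, namely the restriction of the exponential map, and showing this restriction lands inside $\bbg(\bbq_p)$. Concretely, I would first observe that the target $\Lie(\bbg)(\bbq_p)\cap p^c\gl_{n_0}(\bbz_p)$ makes sense as a subset of $p^{c_0}\gl_{n_0}(\bbz_p)$ (since $c\ge c_0$), so the ambient exponential $\exp:p^c\gl_{n_0}(\bbz_p)\to \GL_{n_0}(\bbz_p)[p^c]$ is defined on it. The norm identity $\|\exp(x)-I\|_p=\|x\|_p$ already recalled in the excerpt shows $\exp$ maps $p^c\gl_{n_0}(\bbz_p)$ into $\GL_{n_0}(\bbz_p)[p^c]$, and that $\exp$ and $\log$ are mutually inverse bijections between $p^c\gl_{n_0}(\bbz_p)$ and $\GL_{n_0}(\bbz_p)[p^c]$. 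So the only thing to prove is: if $x\in \Lie(\bbg)(\bbq_p)\cap p^c\gl_{n_0}(\bbz_p)$, then $\exp(x)\in\bbg(\bbq_p)$.

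Once that inclusion is established, the proof finishes quickly. We already know from Corollary~\ref{l:LogInLie} that $\log$ maps $G_c:=\bbg(\bbq_p)\cap\GL_{n_0}(\bbz_p)[p^c]$ injectively into $L_c:=\Lie(\bbg)(\bbq_p)\cap p^c\gl_{n_0}(\bbz_p)$; combined with the claimed statement that $\exp$ maps $L_c$ into $G_c$, and the fact that $\exp$ and $\log$ are inverse bijections on the ambient sets, a short diagram chase gives that $\log: G_c\to L_c$ is surjective as well, hence a bijection with inverse $\exp|_{L_c}$. So the entire content is the inclusion $\exp(L_c)\subseteq\bbg(\bbq_p)$.

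To prove $\exp(x)\in\bbg(\bbq_p)$ for $x\in L_c$, I would mirror the strategy of Corollary~\ref{l:LogInLie}: reduce via $g\mapsto\diag(g,(\det g)^{-1})$ to the case $\bbg\subseteq(\SL_{n_0+1})_{\bbq_p}$ cut out by polynomials $f_i$, so that $\bbg(\bbq_p)=\{h: f_i(h)=0\}$ and $\Lie(\bbg)(\bbq_p)=\{y:(df_i)_I(y)=0\}$. The key is to show $f_i(\exp(x))=0$. Here I would use the one-parameter subgroup $t\mapsto\exp(tx)$ for $t\in\bbz_p$ (the series converges since $tx\in p^c\gl_{n_0}(\bbz_p)$), which is a $p$-adic analytic curve through $I$. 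For $t\in p^{c-c_0}\bbz_p$ we can iterate Lemma~\ref{l:AnalyticAlgebraic} (or rather the estimates in its proof) to see that $\varphi_i(t):=f_i(\exp(tx))$ is a $p$-adic analytic function of $t$ whose derivative at any point $t_0$ is controlled by $(df_i)_{\exp(t_0 x)}(x\exp(t_0x))$; using that $x\in\Lie(\bbg)$ and that $\exp(tx)\in\bbg$ whenever $t$ is small enough that we can invoke Corollary~\ref{l:LogInLie} with $\log(\exp(tx))=tx\in L_c$, a connectedness/analytic-continuation argument (an analytic function on $p^{c-c_0}\bbz_p$ vanishing to infinite order is identically zero) forces $\varphi_i\equiv 0$, so in particular $f_i(\exp(x))=\varphi_i(1)=0$. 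Alternatively, and perhaps more cleanly, one notes $\exp(x)=\exp(p^N(p^{-N}x))=\bigl(\exp(p^{-N}x)\bigr)^{p^N}$ for $N$ chosen so that $p^{-N}x\in L_c$ still and $\exp(p^{-N}x)\in G_c$ by Corollary~\ref{l:LogInLie} applied in reverse — wait, that reasoning is circular, so the honest route is the analytic one, or else to invoke that $G_c$ is a $p$-adic analytic group whose Lie algebra is exactly $L_c$ and whose exponential coincides with the matrix exponential, a standard fact (e.g. from the theory of $p$-adic analytic groups) that the paper may simply cite.

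The main obstacle is precisely this last point: showing $\exp(x)\in\bbg(\bbq_p)$ rather than just in $\GL_{n_0}(\bbz_p)[p^c]$. Injectivity and the bijectivity bookkeeping are formal; the genuine work is verifying that the analytic exponential respects the algebraic subgroup structure. I expect the cleanest self-contained argument to run through the observation that $\log$ conjugates the group multiplication on $G_c$ (for $p$ odd, $c\ge1$; for $p=2$, $c\ge2$) into the Baker--Campbell--Hausdorff operation on $L_c$ — which requires knowing $L_c$ is closed under the $p$-adically convergent BCH series, a consequence of $\Lie(\bbg)(\bbq_p)$ being a Lie subalgebra — so that $G_c$ and $L_c$ are identified as groups, forcing $\log$ to be onto. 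Making the BCH convergence and the bracket-closure precise in the stated range of $c$ (in particular handling $p=2$) is the fiddly part, but it is exactly the kind of estimate already appearing in the excerpt's preliminary computations, so it should go through without surprises.
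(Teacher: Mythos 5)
You correctly isolate the crux: everything reduces to showing that $\exp$ maps $L_c := \Lie(\bbg)(\bbq_p)\cap p^c\gl_{n_0}(\bbz_p)$ into $\bbg(\bbq_p)$, and once that inclusion is known, combining it with Corollary~\ref{l:LogInLie} and the ambient bijection $\exp\colon p^c\gl_{n_0}(\bbz_p)\leftrightarrow \GL_{n_0}(\bbz_p)[p^c]$ is purely formal. The difficulty is that none of the three routes you sketch for this inclusion actually closes. Your ``vanishing to infinite order'' argument for $\varphi_i(t)=f_i(\exp(tx))$ needs $\exp(tx)\in\bbg(\bbq_p)$ on some open ball of $t$'s to get started, but Corollary~\ref{l:LogInLie} only gives the inclusion in the direction $\log(G_c)\subseteq L_c$, not $\exp(L_c)\subseteq G_c$ --- as you note yourself, this is circular, and the single condition $(df_i)_I(x)=0$ only controls $\varphi_i'(0)$, not the higher derivatives. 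Your BCH alternative would show $L_c$ carries a group structure making $\log\colon G_c\to L_c$ a homomorphism, but an injective homomorphism need not be surjective, so it does not by itself produce surjectivity either. The ``cite the standard fact'' escape hatch is essentially what the proposition itself is establishing (namely, that the $p$-adic analytic Lie algebra of $G_c$ coincides with the algebraic $\Lie(\bbg)(\bbq_p)$), so it cannot be assumed.

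The ingredient you are missing is a dimension count. The paper quotes from Margulis that the dimension of $\bbg(\bbq_p)$ as a $\bbq_p$-analytic manifold equals $\dim\bbg$. Since $\log\colon G_{c_0}\to\Lie(\bbg)(\bbq_p)$ is an analytic injection (Corollary~\ref{l:LogInLie}) with invertible differential at $I$, and source and target have the same analytic dimension, the image $\log(G_{c_0})$ contains an open neighbourhood of $0$ in $\Lie(\bbg)(\bbq_p)$ by the inverse function theorem; that is, $p^{c_1}\gl_{n_0}(\bbz_p)\cap\Lie(\bbg)(\bbq_p)\subseteq\log(G_{c_0})$ for some $c_1$. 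This gives $\exp(L_c)\subseteq\bbg(\bbq_p)$ for all $c\ge c_1$, hence the bijection for large $c$. The final step is indeed an analytic continuation, but applied to $f\circ\exp$ on $\Lie(\bbg)(\bbq_p)\cap p^{c_0}\gl_{n_0}(\bbz_p)$ (a multi-variable analytic function vanishing on an open set), to descend from $c\ge c_1$ to the stated range $c\ge c_0$. Your one-variable version of analytic continuation is in the right spirit, but it is deployed before you have any open set on which $f\circ\exp$ is known to vanish; the dimension argument is what produces that open set.
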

\begin{proof}
	First we prove the claim when $c$ is large depending on the embedding of $\bbg$. 
	
	By~\cite[Chapter I, Lemma 2.5.1 (i)]{Mar}, we have that the dimension of $\bbg(\bbq_p)$ as a $\bbq_p$-analytic manifold is the same as $\dim \bbg$. Since, by Lemma~\ref{l:LogInLie}, the restriction of the logarithmic function is an analytic immersion of $G_2:=\bbg(\bbq_p)\cap \GL_{n_0}(\bbz_p)[p^2]$ into $\Lie(\bbg)(\bbq_p)$ and as $p$-adic analytic manifolds $G_2$ has the same dimension as $\Lie(\bbg)(\bbq_p)$, we have that for some positive integer $c_1$
	\[
		p^{c_1}\gl_{n_0}(\bbz_p)\cap \Lie(\bbg)(\bbq_p)\subseteq \log(G_2).
	\]
	This implies that $\exp(p^{c_1}\gl_{n_0}(\bbz_p)\cap \Lie(\bbg)(\bbq_p))\subseteq \bbg(\bbq_p)$. Hence
for any $c\ge c_1$ we have
	\be\label{e:LargeOpenSet}
		\exp(p^{c}\gl_{n_0}(\bbz_p)\cap \Lie(\bbg)(\bbq_p))\subseteq G_c,
	\ee
So by (\ref{e:LargeOpenSet}) and Lemma~\ref{l:LogInLie} we get that $\log:G_c\rightarrow \Lie(\bbg)(\bbq_p)\cap p^c\gl_{n_0}(\bbz_p)$ is a bijection.  

Suppose the closed immersion of $\bbg$ is given by the ideal $I_{\bbg}\triangleleft \bbq[\GL_{n_0}]$. For any $f\in I_{\bbg}$ the composite function $f\circ \exp$ defines a $p$-adic analytic function on $\Lie(\bbg)(\bbq_p)\cap p^{c_0}\gl_{n_0}(\bbz_p)$ where $c_0=1$ if $p>2$ and $c_0=2$ if $p=2$. We have proved that this analytic function is identically zero on the open set $\Lie(\bbg)(\bbq_p)\cap p^{c}\gl_{n_0}(\bbz_p)$ if $c$ is large enough. Hence it is zero, which implies 
\[
\exp(\Lie(\bbg)(\bbq_p)\cap p^{c_0}\gl_{n_0}(\bbz_p))\subseteq \bbg(\bbq_p).
\]
Therefore $\log: \bbg(\bbq_p)\cap \GL_{n_0}(\bbz_p)[p^{c_0}]\rightarrow \Lie(\bbg)(\bbq_p)\cap p^{c_0}\gl_{n_0}(\bbz_p)$ is a bijection.
\end{proof}

\subsection{A remark on certain flat models of an algebraic group.}
In this work we need to work with certain group schemes over either $\bbz[1/q_0]$ or $\bbz_p$. In order to treat them at the same time, only in this section, we let $A$ be a PID and $F$ be its quotient field. 

Let $\bbg$ be a linear algebraic group defined over $F$. For a fixed $F$-embedding $\rho:\bbg\rightarrow (\GL_{n_0})_F$, $\bbg_{\rho}$ denotes the image of $\rho$.  Now we view $(\GL_{n_0})_F$ as the generic fiber of the group scheme $(\GL_{n_0})_A$ and let $\gcal_{\rho}$ be the Zariski-closure of $\bbg_{\rho}$ in $(\GL_{n_0})_A$. 

To clarify the previous paragraph, let $F[\GL_{n_0}]$ be the ring of regular functions of $(\GL_{n_0})_F$ and $I_{\bbg_{\rho}}$ be the defining ideal of $\bbg_{\rho}$ (which means the scheme structure of $\bbg_{\rho}$ is $\Spec(F[\GL_{n_0}]/I_{\bbg_{\rho}})$); then the ring of regular functions $A[\gcal_{\rho}]$ of $\gcal_{\rho}$ is $A[\GL_{n_0}]/(A[\GL_{n_0}]\cap I_{\bbg_{\rho}})$ where $A[\GL_{n_0}]$ is the ring of regular functions of $(\GL_{n_0})_A$.

\begin{lem}\label{l:FlatModel}
	In the above setting, the generic fiber of $\gcal_{\rho}$ is isomorphic to $\bbg_{\rho}$; and, for any $A$-algebra $R$ with free $A$-module structure, $\gcal_{\rho}(R)$ can be naturally identified with $\bbg_{\rho}(R\otimes_A F)\cap \GL_{n_0}(R)$. (Here, since $R$ is a free $A$-modules, we can and will identify $R$ with a subring of $R\otimes_A F$ through $x\mapsto x\otimes 1$.) In particular, $\Lie (\gcal_{\rho})(R)$ can be naturally identified with $\Lie(\bbg_{\rho})(R\otimes_A F)\cap \gl_{n_0}(R)$.
\end{lem}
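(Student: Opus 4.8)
The plan is to reduce the three assertions to the single elementary observation that, since $A$ is a PID with fraction field $F$, the localization $A[\GL_{n_0}]\hookrightarrow F[\GL_{n_0}]$ is injective and every element of $F[\GL_{n_0}]$ lands in $A[\GL_{n_0}]$ after multiplying by a suitable nonzero $a\in A$. Write $I_{\bbg_{\rho}}\triangleleft F[\GL_{n_0}]$ for the defining ideal of $\bbg_{\rho}$, so that, by definition, $J:=A[\GL_{n_0}]\cap I_{\bbg_{\rho}}$ is the defining ideal of $\gcal_{\rho}$ and $A[\gcal_{\rho}]=A[\GL_{n_0}]/J$. First I would check that $J$ generates $I_{\bbg_{\rho}}$ over $F$: the inclusion $J\cdot F[\GL_{n_0}]\subseteq I_{\bbg_{\rho}}$ is clear, and if $f\in I_{\bbg_{\rho}}$ then $af\in A[\GL_{n_0}]\cap I_{\bbg_{\rho}}=J$ for some $a\in A\setminus\{0\}$, whence $f\in J\cdot F[\GL_{n_0}]$. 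Since $F$ is a localization of $A$, the base change $-\otimes_A F$ is exact, and therefore $A[\gcal_{\rho}]\otimes_A F=F[\GL_{n_0}]/(J\cdot F[\GL_{n_0}])=F[\GL_{n_0}]/I_{\bbg_{\rho}}$; that is, the generic fiber of $\gcal_{\rho}$ is $\bbg_{\rho}$, which is the first assertion. Note in passing that $A[\gcal_{\rho}]$ is a torsion-free, hence flat, $A$-module; in particular $\gcal_{\rho}\times_A\gcal_{\rho}$ is the scheme-theoretic closure of $\bbg_{\rho}\times_F\bbg_{\rho}$, so multiplication and inversion of $(\GL_{n_0})_A$ restrict to $\gcal_{\rho}$ and $\gcal_{\rho}$ is a closed $A$-subgroup scheme.

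For the functor-of-points statement, let $R$ be an $A$-algebra that is free as an $A$-module. Then $R$ is flat, hence torsion-free, over $A$, so $\iota\colon R\hookrightarrow R\otimes_A F$ is injective and every $a\in A\setminus\{0\}$ acts invertibly on $R\otimes_A F$ (it is an $F$-algebra); consequently $\GL_{n_0}(R)\hookrightarrow\GL_{n_0}(R\otimes_A F)$. View $g\in\GL_{n_0}(R)$ as the $A$-algebra homomorphism $\phi_g\colon A[\GL_{n_0}]\to R$ it represents, and let $\psi_g\colon F[\GL_{n_0}]\to R\otimes_A F$ be the $F$-algebra homomorphism extending $\iota\circ\phi_g$; then $g\in\gcal_{\rho}(R)$ iff $\phi_g(J)=0$, while $g\in\bbg_{\rho}(R\otimes_A F)$ iff $\psi_g(I_{\bbg_{\rho}})=0$. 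If $\psi_g(I_{\bbg_{\rho}})=0$ then $\iota(\phi_g(J))=\psi_g(J)=0$, and injectivity of $\iota$ gives $\phi_g(J)=0$. Conversely, if $\phi_g(J)=0$ and $f\in I_{\bbg_{\rho}}$, choose $a\in A\setminus\{0\}$ with $af\in J$; then $a\,\psi_g(f)=\psi_g(af)=\iota(\phi_g(af))=0$, and since $a$ is invertible in $R\otimes_A F$ we conclude $\psi_g(f)=0$. Hence, inside $\GL_{n_0}(R\otimes_A F)$, we have $\gcal_{\rho}(R)=\bbg_{\rho}(R\otimes_A F)\cap\GL_{n_0}(R)$, which is the second assertion.

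For the Lie algebras I would apply the second assertion to the ring of dual numbers $R[\epsilon]:=R[\epsilon]/(\epsilon^2)$, which is free over $A$ whenever $R$ is and whose base change to $F$ is $(R\otimes_A F)[\epsilon]$. Using the descriptions $\Lie(\gcal_{\rho})(R)=\ker(\gcal_{\rho}(R[\epsilon])\to\gcal_{\rho}(R))$ and $\Lie(\bbg_{\rho})(R\otimes_A F)=\ker(\bbg_{\rho}((R\otimes_A F)[\epsilon])\to\bbg_{\rho}(R\otimes_A F))$, an element $I+\epsilon X$ with $X\in\gl_{n_0}(R)$ lies in $\gcal_{\rho}(R[\epsilon])$ iff, by the second assertion, it lies in $\bbg_{\rho}((R\otimes_A F)[\epsilon])$, i.e. iff $X\in\Lie(\bbg_{\rho})(R\otimes_A F)$. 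This yields $\Lie(\gcal_{\rho})(R)=\Lie(\bbg_{\rho})(R\otimes_A F)\cap\gl_{n_0}(R)$. (Equivalently, one may note that both sides are cut out inside $\gl_{n_0}(R)$, resp. $\gl_{n_0}(R\otimes_A F)$, by the vanishing of the differentials at $I$ of the elements of $J$, resp. of $I_{\bbg_{\rho}}$, and these impose the same linear conditions on $\gl_{n_0}(R)$ because $J$ spans $I_{\bbg_{\rho}}$ over $F$ and denominators may be cleared.)

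There is no genuine obstacle here: the entire content is the flatness of $A[\gcal_{\rho}]$ over the PID $A$, used in its concrete incarnation as the denominator-clearing step in the converse direction of the second assertion. The only things requiring care are keeping the two rings $R$ and $R\otimes_A F$ distinct throughout, and recording explicitly that the hypothesis that $R$ be free over $A$ is exactly what guarantees $R\hookrightarrow R\otimes_A F$ and the passage to dual numbers behave as expected.
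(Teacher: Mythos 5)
Your proof is correct and follows essentially the same route as the paper: both identify the generic fiber via exactness of localization at $S=A\setminus\{0\}$, both use freeness (hence torsion-freeness) of $R$ to obtain the injection $R\hookrightarrow R\otimes_A F$ and the bijection on points by clearing denominators, and both deduce the Lie-algebra statement from the functor-of-points statement applied to dual numbers. The one small distinction is that the paper also invokes freeness of $A[\gcal_{\rho}]$ over the PID $A$ to embed it in $F[\bbg_{\rho}]$, whereas your denominator-clearing argument sidesteps that and works directly with $J=A[\GL_{n_0}]\cap I_{\bbg_{\rho}}$, which is slightly cleaner.
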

\begin{proof}
	Let $S:=A\setminus \{0\}$ and $\Afr:=A[\GL_{n_0}]$. Then $F=S^{-1}A$ and $\Afr\otimes_A F\simeq S^{-1}\Afr= F[\GL_{n_0}]$. 
	Let $I:=\Afr\cap  I_{\bbg_{\rho}}$. Then $S^{-1}I= I_{\bbg_{\rho}}$. So 
	$(\Afr/I)\otimes_A F\simeq S^{-1}(\Afr/I)\simeq S^{-1}\Afr/S^{-1}I=F[\GL_{n_0}]/ I_{\bbg_{\rho}}=F[\bbg_{\rho}]$.
	
	For any $\phi\in \gcal_{\rho}(R):={\rm Hom}_{A-{\rm alg.}}(A[\gcal_{\rho}],R)$ and $a\in A\setminus \{0\}$, we have $\phi(a)=a \phi(1)\neq 0$ as $R$ is a free $A$-module. Hence $S^{-1}\phi: S^{-1}A[\gcal_{\rho}]\rightarrow S^{-1}R$ is well-defined. As we discussed above $S^{-1}A[\gcal_{\rho}]=F[\bbg_{\rho}]$; and $S^{-1}R\simeq R\otimes_A F$. On the other hand, $\phi$ can be lifted to an $A$-algebra homomorphism from $\Afr$ to $R$. So we get the desired point in $\bbg_{\rho}(R\otimes_A F)\cap \GL_{n_0}(R)$.   
	
	Now suppose $\wh{\phi}\in\bbg_{\rho}(R\otimes_A F)\cap \GL_{n_0}(R)$. This means $\wt{\phi}:F[\GL_{n_0}]\rightarrow R\otimes_A F$, $\wt{\phi}(x):=\wh{\phi}(x+I_{\bbg_{\rho}})$, sends the generators $t_{ij}, t$ of $F[\GL_{n_0}]$ to $R$. 
	
	As $A$ is a PID, $A[\gcal_{\rho}]$ is a free $A$-module; and so we can and will identify $A[\gcal_{\rho}]$ by a subring of $S^{-1}A[\gcal_{\rho}]=F[\bbg_{\rho}]$. Now let $\phi$ be the $A$-algebra homomorphism which is the restriction of $\wh{\phi}$ to $A[\gcal_{\rho}]$. Since the standard generators of $F[\GL_{n_0}]$ are sent to $R$ by $\wh{\phi}$, we can let $R$ to be the codomain of $\phi$. Hence we get an $A$-algebra homomorphism $\phi:A[\gcal_{\rho}]\rightarrow R$, i.e. $\phi\in \gcal_{\rho}(R)$.
	
	To get the last part, we first notice that $R[T]/\langle T^2\rangle=R\oplus R\overline{T}$ is a free $A$-module. Hence by the previous part and the definition of Lie algebra we get the following commutative diagram
\[
\begin{array}{cccccccc}
1\rightarrow & \Lie(\gcal_{\rho})(R) &	 \rightarrow & \gcal_{\rho}(R[\overline T]) & \rightarrow & \gcal_{\rho}(R) & \rightarrow 1\\
& & &\rotatebox{-90}{$\xrightarrow{\sim}$} & &\rotatebox{-90}{$\xrightarrow{\sim}$} & 
\\
1\rightarrow & \Lie(\bbg_{\rho})(R\otimes_A F)\cap \gl_{n_0}(R) &	 \rightarrow & \bbg_{\rho}(R[\overline T]\otimes_A F) \cap \GL_{n_0}(R[\overline T]) & \rightarrow & \bbg_{\rho}(R\otimes_A F)\cap \GL_{n_0}(R) & \rightarrow 1,
\end{array}
\]
which implies the last part. 
	 
\end{proof}

\section{Generating the Lie algebra as a module under the adjoint action.}
In this section, we study perfect algebraic groups and their adjoint representation. The main result of this section is Proposition~\ref{p:GeneratingLieAlgebra}.

Let's recall that the normal closure of an algebraic subgroup $\bbg_2$ of $\bbg_1$ is the smallest normal closed subgroup of $\bbg_2$ in $\bbg_1$. 

\begin{prop}
	\label{p:GeneratingLieAlgebra}
	Let $\bbg_2\subseteq \bbg_1$ be Zariski-connected $\bbq$-groups. Suppose $\bbg_2$ is a perfect $\bbq$-subgroup of $\bbg_1$, and further assume that the normal closure of $\bbg_2$ in $\bbg_1$ is $\bbg_1$. Let $\gfr_i:=\Lie(\bbg_i)(\bbq)$ be the $\bbq$-structure of the Lie algebras of $\bbg_i$. Then $\gfr_2$ generates $\gfr_1$ as an $\bbg_1(\bbq)$-module under the adjoint representation.    
\end{prop}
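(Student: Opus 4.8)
The plan is to work inside $\gfr_1$ and let $\mfr$ denote the $\bbg_1(\qbar)$-submodule of $\gfr_1\otimes_\bbq\qbar$ generated by $\gfr_2\otimes_\bbq\qbar$ under the adjoint representation; by Galois descent it suffices to show $\mfr = \gfr_1\otimes_\bbq\qbar$, so we may extend scalars and argue over an algebraically closed field, writing $\bbg_i$ for $(\bbg_i)_{\qbar}$ and $\gfr_i$ for $\gfr_i\otimes\qbar$. First I would observe that, since the adjoint action of $\bbg_1$ is algebraic and $\bbg_1$ is connected, the subspace $\mfr$ is invariant under $\ad(\gfr_1)$ as well, i.e.\ $\mfr$ is an ideal of $\gfr_1$; in fact $\mfr$ is the Lie algebra of a connected normal algebraic subgroup $\bbn$ of $\bbg_1$. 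This uses the standard correspondence: the $\bbg_1$-submodule of $\gfr_1$ generated by a sub-Lie-algebra which is itself the Lie algebra of a subgroup is again the Lie algebra of a (normal, connected) subgroup — here one can take $\bbn$ to be the subgroup generated by all $\bbg_1$-conjugates of $\bbg_2$, whose Lie algebra contains all $\Ad(g)\gfr_2$ and hence contains $\mfr$, and conversely $\Lie(\bbn)\subseteq\mfr$ because $\bbn$ is generated by the images of the maps $g\mapsto g x g^{-1}$ whose differentials land in $\mfr$.

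Next I would invoke the hypothesis that the normal closure of $\bbg_2$ in $\bbg_1$ is all of $\bbg_1$: the subgroup $\bbn$ just constructed is precisely that normal closure (it is normal since it is stable under conjugation by $\bbg_1$, it is closed and connected, and it contains $\bbg_2$; any closed normal subgroup containing $\bbg_2$ contains every conjugate of $\bbg_2$ hence contains $\bbn$). Therefore $\bbn = \bbg_1$, and consequently $\mfr = \Lie(\bbn) = \gfr_1$. This gives the statement over $\qbar$, and descending back via the Galois action (which permutes the generators $\gfr_2$ among themselves and commutes with the $\bbg_1(\qbar)$-action in the appropriate equivariant sense) yields that $\gfr_2$ generates $\gfr_1$ as a $\bbg_1(\qbar)$-module; a further standard argument, using that $\bbg_1(\bbq)$ is Zariski-dense in $\bbg_1$ (true for connected groups over infinite perfect fields, or after passing to an isogenous form one applies density of rational points), upgrades this to generation as a $\bbg_1(\bbq)$-module, since the $\bbq$-span of $\{\Ad(\gamma)v : \gamma\in\bbg_1(\bbq), v\in\gfr_2\}$ is a $\bbq$-subspace whose $\qbar$-span is $\Ad(\bbg_1(\qbar))$-stable and equals $\gfr_1\otimes\qbar$.

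The main obstacle I anticipate is the passage between the algebraic/$\qbar$ statement and the rational statement: one must be careful that the $\bbg_1(\bbq)$-module generated by $\gfr_2$ is a genuine $\bbq$-subspace of $\gfr_1$ whose extension of scalars is the full $\Ad(\bbg_1)$-module, and this requires either Zariski-density of $\bbg_1(\bbq)$ in $\bbg_1$ or a careful use of the fact that $\bbg_1$ is $\bbq$-connected (so that, e.g., the big cell is defined over $\bbq$ and has $\bbq$-points). The perfectness of $\bbg_2$ is not used in the module-generation argument per se but I expect it enters to guarantee $\bbg_2$ is a \emph{closed} subgroup with a well-behaved Lie algebra and that $\Lie(\bbg_2)=[\Lie(\bbg_2),\Lie(\bbg_2)]$ matches the perfectness of the group, which is needed elsewhere in the paper; within this proof its role is to ensure the normal closure behaves as expected. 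A secondary technical point is justifying that $\bbn$, defined as the subgroup generated by conjugates of $\bbg_2$, is closed and connected with $\Lie(\bbn)$ equal to the span of the $\Ad(g)\gfr_2$ — this is a standard fact about subgroups generated by families of connected subgroups (Borel, \emph{Linear Algebraic Groups}, Prop.~2.2 and its corollaries), which I would cite rather than reprove.
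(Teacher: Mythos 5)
Your proposal is correct in substance but takes a genuinely different route from the paper. The paper first uses Lemma~\ref{l:Perfect} to promote perfectness of $\bbg_2$ to perfectness of $\bbg_1$, writes $\bbg_1 \simeq \bbh \ltimes \bbu$, and then reduces to the case of abelian unipotent radical by means of Lemma~\ref{l:ReductionToAbelianUnipotentRadical} (the ideal $\afr_\bbu$ is the Jacobson radical of the span $A_{\bbg}$ of $\Ad(\bbg_1(\bbq))$, and $[\ufr,\ufr]\subseteq J(A_{\bbg})\gfr_1$) together with Nakayama's lemma; it then applies Lemmas~\ref{l:Semisimple} and~\ref{l:AbelianUnipotentRadical} to identify the $\Ad(\bbg_1(\bbq))$-submodule $M$ (modulo $[\ufr,\ufr]$) as $\Lie(\bbn)(\bbq)$ for a normal $\bbq$-subgroup $\bbn\supseteq\bbg_2$, and concludes $\bbn=\bbg_1$. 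You instead argue geometrically: $\mfr$ is an ideal because it is $\Ad(\bbg_1)$-invariant, $\bbn:=\langle g\bbg_2 g^{-1}\rangle$ is the normal closure and hence equals $\bbg_1$, and $\Lie(\bbn)\subseteq\mfr$ gives $\gfr_1\subseteq\mfr$. Your route is shorter and, incidentally, never uses perfectness of $\bbg_2$ or $\bbg_1$, so it proves a slightly more general statement. The one place that needs to be tightened is the inclusion $\Lie(\bbn)\subseteq\mfr$: the justification ``the differentials of $g\mapsto gxg^{-1}$ land in $\mfr$'' is not by itself a proof, and this is \emph{not} an immediate corollary of Borel's Proposition~2.2 (the Lie algebra of a group generated by connected subgroups is not, in general, the span of their Lie algebras). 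The correct argument in characteristic $0$ is: by Borel--Chevalley, $\bbn$ is a finite product $H_1\cdots H_n$ of conjugates $H_i=g_i\bbg_2g_i^{-1}$, and the multiplication map $\mu:H_1\times\cdots\times H_n\to\bbn$ is dominant, hence (by generic smoothness in characteristic $0$) has surjective differential at a generic point $(h_1,\ldots,h_n)$; after left translation the image of $d\mu$ there is $\sum_i \Ad((h_{i+1}\cdots h_n)^{-1})(\Ad(g_i)\gfr_2)$, which lies in $\mfr$ because $\mfr$ is $\Ad(\bbg_1)$-stable, so $\Lie(\bbn)\subseteq\mfr$. With that step supplied, your descent from $\qbar$ to $\bbq$ via Zariski density of $\bbg_1(\bbq)$ (valid for connected linear algebraic groups over $\bbq$) is fine, and the proof is complete.
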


\begin{lem}\label{l:Perfect}
		Let $\bbg_2\subseteq \bbg_1$ be Zariski-connected $\bbq$-groups. Suppose $\bbg_2$ is a perfect $\bbq$-subgroup of $\bbg_1$, and further assume that the normal closure of $\bbg_2$ in $\bbg_1$ is $\bbg_1$. Then $\bbg_1$ is perfect.
\end{lem}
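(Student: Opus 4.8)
The plan is to examine $\bbg_2$ through the abelianization of $\bbg_1$. First I would invoke the classical fact that the derived subgroup $[\bbg_1,\bbg_1]$ of the Zariski-connected $\bbq$-group $\bbg_1$ is a Zariski-connected closed normal $\bbq$-subgroup of $\bbg_1$ (the standard theorem on commutator subgroups of connected linear algebraic groups, e.g.\ in Borel's book). Let $\pi\colon\bbg_1\rightarrow \bbg_1/[\bbg_1,\bbg_1]$ be the quotient homomorphism; it is defined over $\bbq$ and its target is a commutative algebraic group.

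Next I would show that $\bbg_2\subseteq [\bbg_1,\bbg_1]$. Since $\pi$ is a homomorphism of algebraic groups, its image $\pi(\bbg_2)$ is a closed subgroup of the commutative group $\bbg_1/[\bbg_1,\bbg_1]$, hence commutative; on the other hand $\pi(\bbg_2)$ is perfect, because $\bbg_2$ is: as $\bbg_2=[\bbg_2,\bbg_2]$ and commutator subgroups are preserved under surjective homomorphisms, $[\pi(\bbg_2),\pi(\bbg_2)]=\pi([\bbg_2,\bbg_2])=\pi(\bbg_2)$. A group that is at the same time commutative and perfect is trivial, so $\pi(\bbg_2)$ is trivial, i.e.\ $\bbg_2\subseteq \ker\pi=[\bbg_1,\bbg_1]$. (If one prefers, the last two sentences can be run on $\bbc$-points as a statement about abstract groups, using $[\bbg_1,\bbg_1](\bbc)=[\bbg_1(\bbc),\bbg_1(\bbc)]$ and the corresponding identity for $\bbg_2$.)

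To conclude, I would observe that $[\bbg_1,\bbg_1]$ is a closed normal subgroup of $\bbg_1$ containing $\bbg_2$. By definition the normal closure of $\bbg_2$ in $\bbg_1$ is the smallest such subgroup, and it equals $\bbg_1$ by hypothesis; hence $\bbg_1\subseteq [\bbg_1,\bbg_1]$, so $\bbg_1=[\bbg_1,\bbg_1]$ is perfect. The argument is essentially formal: the only external ingredient is the closedness (and $\bbq$-rationality, or Zariski-connectedness, which is not even needed here) of the derived subgroup of a connected linear algebraic group, and I do not expect any genuine obstacle beyond citing that fact cleanly and making sure the ``perfect $+$ commutative $=$ trivial'' step is applied to the honest image $\pi(\bbg_2)$ rather than to some abstract subgroup.
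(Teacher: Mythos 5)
Your proof is correct and takes essentially the same route as the paper: pass to the abelianization $\bbg_1/[\bbg_1,\bbg_1]$, note that the perfect group $\bbg_2$ must land in the kernel, then invoke the normal-closure hypothesis to conclude $\ker = \bbg_1$. You simply spell out the ``perfect implies trivial image in an abelian group'' step, which the paper leaves implicit.
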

\begin{proof}
	Let $f:\bbg_1\rightarrow \bbg_1/[\bbg_1,\bbg_1]$ be the natural quotient map. Since $\bbg_2$ is perfect, $\bbg_2$ is a subgroup of the kernel of $f$. Since $\ker(f)$ is a normal subgroup of $\bbg_1$ and the normal closure of $\bbg_2$ in $\bbg_1$ is $\bbg_1$, we get that $\ker(f)=\bbg_1$. Hence $\bbg_1$ is perfect.
\end{proof}
The following lemma has been proved in \cite[Lemma 20]{SG:SAII} for large finite fields. It is reproved here for the convenience of the reader. Lemma~\ref{l:ReductionToAbelianUnipotentRadical} enables us to reduce the proof of Proposition~\ref{p:GeneratingLieAlgebra} to the case where the unipotent radical of $\bbg_1$ is abelian.

\begin{lem}
	\label{l:ReductionToAbelianUnipotentRadical}
	Let $\bbh$ be a Zariski-connected semisimple $\bbq$-group and $\bbu$ be a unipotent $\bbq$-group. Suppose $\bbh$ acts on $\bbu$, and let $\bbg:=\bbh\ltimes \bbu$. Let $A_{\bbg}$ be the $\bbq$-span of $\Ad(\bbg(\bbq))\subseteq {\rm End}_{\bbq}(\Lie(\bbg)(\bbq))$ and $\afr_{\bbu}$ be the ideal of $A_{\bbg}$ generated by $\{\Ad(u)-1|\h u\in \bbu(\bbq)\}$. Then
	\begin{enumerate}
		\item The Jacobson radical $J(A_{\bbg})$ of $A_{\bbg}$ is equal to $\afr_{\bbu}$.
		\item $[\ufr,\ufr]\subseteq J(A_{\bbg}) \gfr$ where $\ufr:=\Lie(\bbu)(\bbq)$ and $\bbg:=\Lie(\bbg)(\bbq)$.  
	\end{enumerate}
\end{lem}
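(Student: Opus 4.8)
The plan is to analyze the structure of the finite-dimensional associative $\bbq$-algebra $A_{\bbg}$ through its action on $\gfr=\hfr\oplus\ufr$, where $\hfr:=\Lie(\bbh)(\bbq)$ and $\ufr:=\Lie(\bbu)(\bbq)$. First I would observe that $\afr_{\bbu}$ is indeed a two-sided ideal: it is generated by $\{\Ad(u)-1 : u\in\bbu(\bbq)\}$, and since $\bbu$ is normal in $\bbg$, conjugating $u$ by an arbitrary element $g\in\bbg(\bbq)$ shows $\Ad(g)(\Ad(u)-1)\Ad(g)^{-1} = \Ad(gug^{-1})-1 \in \afr_{\bbu}$, so the $\bbq$-span of the generators is stable under left and right multiplication by $A_{\bbg}$. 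For part (1), I would show $\afr_{\bbu}\subseteq J(A_{\bbg})$ by checking it is a nil (indeed nilpotent) ideal: the elements $\Ad(u)-1$ for $u\in\bbu(\bbq)$ are nilpotent endomorphisms of $\gfr$ (since $\bbu$ is unipotent, $\Ad(u)$ is a unipotent operator), and more importantly, since $\bbu$ is a normal unipotent subgroup, one can filter $\gfr$ by $\bbg$-invariant subspaces on which $\bbu$ acts trivially on the graded pieces — for instance the descending central series of $\ufr$ together with $\ufr\subseteq\gfr$ — so that $\afr_{\bbu}$ maps every filtration step into the next, making it nilpotent as an ideal of $A_{\bbg}$. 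For the reverse inclusion $J(A_{\bbg})\subseteq\afr_{\bbu}$, I would argue that $A_{\bbg}/\afr_{\bbu}$ is semisimple: modulo $\afr_{\bbu}$ the algebra is generated by the image of $\Ad(\bbh(\bbq))$ acting on $\gfr$, and since $\bbh$ is semisimple and Zariski-connected, its adjoint action (and more generally any algebraic representation, here on $\gfr$ viewed as a $\bbh$-module) is completely reducible in characteristic zero; hence the $\bbq$-span of $\Ad(\bbh(\bbq))$ in $\mathrm{End}_{\bbq}(\gfr)$ is a semisimple algebra, and $A_{\bbg}/\afr_{\bbu}$ is a quotient of (a tensor-type combination built from) it, still semisimple. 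Therefore $J(A_{\bbg})=\afr_{\bbu}$.

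For part (2), I would use that $[\ufr,\ufr]$ sits inside $\ufr$ and is the Lie-algebra analogue of $[\bbu,\bbu]$. The key point is that the adjoint action of $\bbu(\bbq)$ on $\ufr$ realizes $[\ufr,\ufr]$ inside the image of the operators $\Ad(u)-1$: for $u=\exp(X)$ with $X\in\ufr$ and $Y\in\ufr$, we have $(\Ad(u)-1)Y = [X,Y] + \tfrac{1}{2}[X,[X,Y]]+\cdots$, and running over all $X,Y$ the span of these vectors is exactly $[\ufr,\ufr]$ (the higher-order terms already lie in $[\ufr,[\ufr,\ufr]]\subseteq[\ufr,\ufr]$, and the leading bracket term surjects onto $[\ufr,\ufr]$ after taking the span, by a standard argument that one can solve for $[X,Y]$ using the nilpotent expansion). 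Since each $\Ad(u)-1\in\afr_{\bbu}=J(A_{\bbg})$ and $\ufr\subseteq\gfr$, we get $[\ufr,\ufr]\subseteq J(A_{\bbg})\,\gfr$ as claimed.

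The main obstacle I anticipate is the proof that $A_{\bbg}/\afr_{\bbu}$ is semisimple, i.e.\ that nothing beyond the unipotent part contributes to the Jacobson radical. The subtlety is that $A_{\bbg}$ is not literally a group algebra and $\Ad(\bbg(\bbq))$ is not a finite group, so one cannot invoke Maschke's theorem directly; instead the argument must go through the complete reducibility of rational representations of the semisimple group $\bbh$ in characteristic zero, applied to the $\bbh$-module $\gfr$, and one must be careful that passing to the associative algebra generated by the image preserves semisimplicity — this is where one uses that a subalgebra of $\mathrm{End}_{\bbq}(V)$ generated by a completely reducible set of operators is itself semisimple (Wedderburn/Jacobson density theory). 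A secondary technical point is verifying that $\afr_{\bbu}$ is genuinely nilpotent rather than merely nil, which is handled by exhibiting an explicit $\bbg$-stable filtration of $\gfr$ coming from the (finite) lower central series of $\ufr$ together with the extension $0\to\ufr\to\gfr\to\hfr\to 0$, on whose graded pieces every $\Ad(u)-1$ acts as zero.
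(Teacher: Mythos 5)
Your proposal follows the same strategy as the paper: for part (1) you show $\afr_{\bbu}$ is a nilpotent ideal (via a $\bbg$-stable filtration) and that $A_{\bbg}/\afr_{\bbu}$ is a semisimple quotient of the semisimple algebra $A_{\bbh}$ spanned by $\Ad(\bbh(\bbq))$; for part (2) you expand $(\Ad(\exp X)-1)Y$ as a power series in $\ad(X)$ and extract the leading bracket term. The paper's write-up is terser, asserting $A_{\bbg}/\afr_{\bbu}\simeq A_{\bbh}$ directly and making the extraction explicit via a Vandermonde argument applied to $n^{-1}(\Ad(\exp(nx))(y)-y)$, but the ideas coincide.
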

\begin{proof}
	Since $\bbu$ is unipotent, $\afr_{\bbu}$ is a nilpotent ideal. Hence $\afr_{\bbu}\subseteq J(A_{\bbg})$. Let $A_{\bbh}$ be the $\bbq$-span of $\Ad(\bbh(\bbq))$ as a subset of ${\rm End}_{\bbq}(\gfr)$. Since $\bbh$ is semisimple, $A_{\bbh}$ is a semisimple algebra. Moreover, as a $\bbq$-algebra, we have $A_{\bbg}/\afr_{\bbu}\simeq A_{\bbh}$. Overall we get $J(A_{\bbg})=\afr_{\bbu}$. 
	
		Since $\bbu$ is unipotent, $\log$ and $\exp$ define $\bbq$-morphisms between $\bbu$ and its Lie algebra. For any $n\in \bbz$, $x,y\in \ufr$ we have 
		\be\label{e:ExpAd}
		\Ad(\exp(nx))(y)=\exp(n\ad(x))(y).
		\ee
		Thus for $n\in \bbz^+$, $x,y\in \ufr$ we have
		\be\label{e:ExpAd2}
		n^{-1}(\Ad(\exp(nx))(y)-y)=[x,y]+\sum_{i=1}^{\dim_{\bbq}\ufr} \frac{n^i}{i!} \ad(x)^i(y)\in J(A_{\bbg})\gfr.
		\ee
		Therefore by the Vandermonde determinant we have that $[x,y]\in J(A_{\bbg})\gfr$. 
\end{proof}
\begin{lem}\label{l:Semisimple}
	Let $\bbh$ be a Zariski-connected, semisimple $\bbq$-group, and $\hfr=\Lie(\bbh)(\bbq)$. Let $M$ be a subspace of $\hfr$ which is $\overline{H}:=\Ad(\bbh(\bbq))$-invariant. Then there is a normal subgroup $\bbn$ of $\bbh$ such that $\Lie(\bbn)(\bbq)=M$; in particular, if $M$ is a proper subspace of $\hfr$, then $\bbn$ is a proper normal subgroup of $\bbh$.   
\end{lem}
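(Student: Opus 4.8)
The plan is to use the standard correspondence between connected normal subgroups of a semisimple group and ideals of its Lie algebra, together with complete reducibility of the adjoint representation. First I would observe that since $\bbh$ is semisimple and Zariski-connected, $\hfr = \Lie(\bbh)(\bbq)$ is a semisimple Lie algebra over $\bbq$ (in characteristic zero), and the adjoint action $\Ad:\bbh\to\GL(\hfr)$ has image whose $\bbq$-span is a semisimple associative algebra; in particular every $\Ad(\bbh(\bbq))$-submodule of $\hfr$ has an $\Ad(\bbh(\bbq))$-invariant complement. So given the $\overline{H}$-invariant subspace $M$, I first want to upgrade ``$\Ad(\bbh(\bbq))$-invariant'' to ``ideal of the Lie algebra $\hfr$''. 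This follows by differentiating: since $\bbh$ is connected, an $\Ad(\bbh)$-invariant subspace is $\ad(\hfr)$-invariant, i.e. $[\hfr,M]\subseteq M$, so $M$ is an ideal of $\hfr$. (One should be a little careful because a priori we only have invariance under the subgroup $\bbh(\bbq)$ of $\bbq$-points, not all of $\bbh$; but $\bbh(\bbq)$ is Zariski-dense in the connected $\bbq$-group $\bbh$, so any subspace of the $\bbq$-vector space $\hfr$ invariant under $\Ad(\bbh(\bbq))$ is invariant under $\Ad(\bbh)$, hence under $\ad(\hfr)$.)

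Next, since $\hfr$ is a semisimple Lie algebra over the field $\bbq$ of characteristic zero, every ideal $M\triangleleft\hfr$ is a direct factor: $\hfr = M\oplus M'$ with $M'$ an ideal, and $M$ is itself semisimple and equals a sum of some of the $\bbq$-simple factors of $\hfr$ in the decomposition into $\bbq$-simple ideals. Now I would produce the subgroup $\bbn$. One route: let $\bbn$ be the Zariski-closure in $\bbh$ of the subgroup generated by $\{\exp(tx) : x\in M, t\}$ in the formal sense — more cleanly, use that in characteristic zero there is a bijective correspondence between Zariski-connected normal algebraic subgroups of $\bbh$ and ideals of $\Lie(\bbh)$, coming from the fact that a connected algebraic subgroup is determined by its Lie algebra and that $\ad$-invariant subspaces integrate to connected subgroups (this is, e.g., in Borel or Humphreys). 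Thus there is a Zariski-connected normal $\bbq$-subgroup $\bbn\trianglelefteq\bbh$ with $\Lie(\bbn) = M$; the $\bbq$-rationality of $\bbn$ follows because $M$ is defined over $\bbq$ (it is a $\bbq$-subspace and its integral subgroup is cut out by $\bbq$-polynomial conditions, or: $\bbn$ equals the product of the $\bbq$-almost-simple factors of $\bbh$ corresponding to the $\bbq$-simple summands of $M$). Finally, the "in particular" clause is immediate: if $M\subsetneq\hfr$ then $\Lie(\bbn) = M\neq\hfr = \Lie(\bbh)$, and since in characteristic zero an inclusion of connected algebraic groups inducing an equality of Lie algebras is an equality, $\bbn\subsetneq\bbh$, so $\bbn$ is a proper normal subgroup.

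The main obstacle I anticipate is not the Lie-theoretic content — complete reducibility and the ideal/normal-subgroup correspondence are standard in characteristic zero — but making sure the $\bbq$-rationality is handled correctly at each stage: that $\Ad(\bbh(\bbq))$-invariance of a $\bbq$-subspace already forces $\ad(\hfr)$-invariance (via Zariski-density of $\bbh(\bbq)$), and that the normal subgroup integrating the $\bbq$-ideal $M$ can be taken defined over $\bbq$ rather than just over $\qbar$. The cleanest way to package the second point is to decompose $\hfr$ into its $\bbq$-simple ideals $\hfr = \hfr_1\oplus\cdots\oplus\hfr_r$, note each $\hfr_j = \Lie(\bbn_j)$ for a $\bbq$-almost-simple normal factor $\bbn_j\trianglelefteq\bbh$, observe that $M$ being an $\ad(\hfr)$-ideal of a semisimple Lie algebra is a subsum $M = \bigoplus_{j\in S}\hfr_j$ for some $S\subseteq\{1,\dots,r\}$ (each $\hfr_j$ is simple as a Lie algebra so $M\cap\hfr_j$ is $0$ or $\hfr_j$), and then set $\bbn := \prod_{j\in S}\bbn_j$, which is visibly a $\bbq$-normal subgroup with $\Lie(\bbn)(\bbq) = M$, and proper as soon as $S\neq\{1,\dots,r\}$, i.e. as soon as $M\subsetneq\hfr$.
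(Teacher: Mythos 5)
Your proof is correct, and the overall strategy is the same as the paper's: reduce to the decomposition $\hfr = \bigoplus_j \hfr_j$ into $\bbq$-simple ideals, show $M$ is a subsum $\bigoplus_{j\in S}\hfr_j$, and take $\bbn$ to be the product of the corresponding $\bbq$-almost-simple normal factors. The step where you differ is in how you establish that $M$ is a subsum. You first upgrade $\Ad(\bbh(\bbq))$-invariance to $\ad(\hfr)$-invariance via Zariski-density and differentiation, and then invoke the standard structure theorem that every ideal of a semisimple Lie algebra in characteristic zero is a sum of some of its simple ideals. The paper instead stays entirely at the level of the group action and proves the subsum claim directly by a commutator trick: if the $j$-th projection $\pr_j(M)$ is nonzero, pick $x \in M$ with $x_j \neq 0$ and $h_j \in \bbh_j(\bbq)$ with $\Ad(h_j)(x_j) \neq x_j$; then $\Ad(h_j)(x) - x$ lies in $(M \cap \hfr_j)\setminus\{0\}$, and simplicity of $\hfr_j$ as an $\Ad(\bbh(\bbq))$-module forces $\hfr_j \subseteq M$. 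The paper's version is a bit more self-contained, avoiding the Lie-algebraic passage and the citation of the structure theorem; your version leans on standard textbook facts, at the cost of the extra ``invariance to ideal'' step. One small caveat on your write-up: the parenthetical ``each $\hfr_j$ is simple so $M\cap\hfr_j$ is $0$ or $\hfr_j$'' is not, by itself, a proof that an ideal is a subsum (the same dichotomy holds for the graph of an isomorphism $\hfr_1 \to \hfr_2$, which is not a subsum); it is the earlier appeal to the full structure theorem for ideals in a semisimple Lie algebra, or an argument of the paper's commutator type, that closes that gap. Finally, note that the paper makes a ``without loss of generality $\bbh$ is simply-connected'' reduction so that $\bbh$ is a genuine direct product of its simple factors; you skip this, which is fine since the product $\prod_{j\in S}\bbn_j$ of commuting normal factors is still a normal $\bbq$-subgroup with the required Lie algebra even when the product is only almost-direct.
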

\begin{proof} 

	Without loss of generality we can and will assume that $\bbh$ is simply-connected. So there are Zariski-connected, simply-connected, semisimple, $\bbq$-simple groups $\bbh_i$ such that $\bbh\simeq \oplus_i \bbh_i$. Without loss of generality we can and will identify $\bbh$ with $\oplus_i \bbh_i$. Therefore $\hfr_i:=\Lie(\bbh_i)(\bbq)$ are simple $\overline{H}$-modules, and $M$ can be identified with a subspace of $\oplus_i \hfr_i$.
	
{\bf Claim.} Let ${\rm pr}_j:\oplus_i \hfr_i\rightarrow \hfr_j$ be the projection onto the $j$-th component. Let $ J:=\{j|\h {\rm pr}_j(M)\neq 0\}$. Then $M=\oplus_{j\in J} \hfr_j$.

{\em Proof of Claim.} For $j\in J$, there is $x=(x_i)_i\in \oplus_i \hfr_i$ such that $x_j\neq 0$. So there is $h_j\in \bbh_j(\bbq)$ such that $\Ad(h_j)(x_j)\neq x_j$. Hence 
\[
\Ad(h_j)(x)-x=\Ad(h_j)(x_j)-x_j\in (M\cap \hfr_j)\setminus \{0\}.
\]
So $M\cap \hfr_j\neq 0$. Since $\hfr_j$ is a simple $\overline{H}$-module, we get $\hfr_j\subseteq M$. Thus $M=\oplus_{j\in J} \hfr_j$.

Now it is clear that the subgroup $\bbn:=\oplus_{j\in J}\bbh_j$ satisfies all the mentioned conditions.
\end{proof}

The next lemma is tightly related to \cite[Lemma 13]{SGV}, where normal subgroups of a perfect algebraic group is described. And we closely follow the proof of the mentioned result. 

\begin{lem}\label{l:AbelianUnipotentRadical}
	Let $\bbh$ be a Zariski-connected semisimple $\bbq$-group. Let $\rho:\bbh\rightarrow \mathbb{GL}(\bbv)$ be a $\bbq$-representation of $\bbh$. Suppose $\bbv(\overline{\bbq})$ has no non-zero $\bbh(\overline{\bbq})$-fixed point. Let $\bbg:=\bbh\ltimes \bbv$, and $\gfr:=\Lie(\bbg)(\bbq)$. Let $M$ be a subspace of $\gfr$ which is $\overline{G}:=\Ad(\bbg(\bbq))$-invariant. Then there is a normal subgroup $\bbn$ of $\bbg$ such that $M=\Lie(\bbn)(\bbq)$.     
\end{lem}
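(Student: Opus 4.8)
The plan is to use the $\bbq$-vector-space decomposition $\gfr=\hfr\oplus\vfr$, where $\hfr:=\Lie(\bbh)(\bbq)$ and $\vfr:=\Lie(\bbv)(\bbq)$, keeping in mind that $\vfr$ is an abelian ideal of $\gfr$ on which $\bbh$ acts via $d\rho$ and that $\gfr/\vfr\cong\hfr$, with $\Ad(\bbv(\bbq))$ acting trivially on this quotient. First I observe that $\bbg(\bbq)$ is Zariski-dense in $\bbg$: indeed $\bbg$ is $\bbq$-isomorphic, as a variety, to $\bbh\times\bbv$, a connected reductive group over an infinite field is unirational, and $\bbv$ is an affine space, so $\bbg$ is unirational over $\bbq$ and thus has Zariski-dense rational points. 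Hence the $\bbq$-subspace $M$, being $\overline{G}=\Ad(\bbg(\bbq))$-invariant, is in fact $\Ad(\bbg)$-stable, and therefore an ideal of $\gfr$. Put $W:=M\cap\vfr$ and let $\bar M\subseteq\hfr$ be the image of $M$ under the projection $\gfr=\hfr\oplus\vfr\to\hfr$. Then $W$ is an $\Ad(\bbh(\bbq))$-submodule of the vector group $\vfr$, so $W=\Lie(\bbv_W)(\bbq)$ for a unique vector subgroup $\bbv_W\subseteq\bbv$, which is $\bbh$-stable (by density of $\bbh(\bbq)$ in $\bbh$) and hence normal in $\bbg$; and $\bar M$ is an $\Ad(\bbh(\bbq))$-invariant ideal of $\hfr$, so by Lemma~\ref{l:Semisimple} there is a normal $\bbq$-subgroup $\bbh'\triangleleft\bbh$ with $\Lie(\bbh')(\bbq)=\bar M$. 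Since $\hfr$ is semisimple, its ideal $\bar M$ is perfect: $[\bar M,\bar M]=\bar M$.

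The crux is to show that $M=\bar M\oplus W$, i.e.\ that $M$ contains no skew graph over $\bar M$. A direct computation in the semidirect product gives $\Ad((1,w))(X,Y)=(X,\,Y-d\rho(X)w)$ for $X\in\hfr$ and $Y,w\in\vfr$; applying $\Ad(\bbv(\bbq))$-invariance of $M$ to an element $(X,Y)\in M$ and subtracting yields $(0,d\rho(X)w)\in M\cap\vfr=W$, so $d\rho(\bar M)\vfr\subseteq W$. Equivalently $\bbh'$ (connected, with $\Lie(\bbh')(\bbq)=\bar M$) acts trivially on $\vfr/W$, and hence on $\bbv/\bbv_W$. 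Now let $\sigma\colon\bar M\to\vfr/W$ be the $\bbq$-linear map assigning to the $\hfr$-component of an element of $M$ the class modulo $W$ of its $\vfr$-component; this is well defined because $\ker(M\to\bar M)=W$. Evaluating $[(X,Y),(X',Y')]\in M$ for two elements of the ideal $M$ gives $\sigma([X,X'])=d\rho(X)\sigma(X')-d\rho(X')\sigma(X)$ in $\vfr/W$, and the right-hand side vanishes because $d\rho(\bar M)$ kills $\vfr/W$; hence $\sigma$ vanishes on $[\bar M,\bar M]=\bar M$. Thus $\bar M\times\{0\}\subseteq M$; since also $\{0\}\times W\subseteq M$ and $\dim_{\bbq}M=\dim_{\bbq}\bar M+\dim_{\bbq}W$, we conclude $M=(\bar M\times\{0\})\oplus(\{0\}\times W)$.

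Finally I set $\bbn:=\bbh'\ltimes\bbv_W$, a closed $\bbq$-subgroup of $\bbg$ since $\bbv_W$ is $\bbh'$-stable, with $\Lie(\bbn)(\bbq)=\bar M\oplus W=M$. To see $\bbn\triangleleft\bbg$: the factor $\bbh$ normalizes $\bbh'$ (normal in $\bbh$) and $\bbv_W$ ($\bbh$-stable); and $\bbv$ centralizes $\bbv_W$ (being abelian) and, since $(1,w)(h',0)(1,-w)=(h',(1-\rho(h'))w)$ while $\bbh'$ acts trivially on $\bbv/\bbv_W$, conjugates $\bbh'$ into $\bbn$. Hence $\bbg=\bbh\cdot\bbv$ normalizes $\bbn$, which proves the lemma. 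The main obstacle is the middle paragraph --- ruling out a nontrivial section $\sigma$; this uses perfectness of the ideal $\bar M$ of the semisimple algebra $\hfr$ together with the fact, extracted from $\Ad(\bbv(\bbq))$-invariance, that $\bar M$ acts trivially on $\vfr/W$. The remaining points are routine $\bbq$-rationality bookkeeping, and the hypothesis that $\bbv$ has no nonzero $\bbh$-fixed vector is simply the ``abelian unipotent radical'' situation that the reductions preceding the lemma (via Lemma~\ref{l:ReductionToAbelianUnipotentRadical}) arrange.
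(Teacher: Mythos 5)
Your proof is correct, and the key step is genuinely different from the paper's. Both arguments set $W=M\cap V$ (the paper's $M'$, where $V=\Lie(\bbv)(\bbq)$), use Lemma~\ref{l:Semisimple} to produce a normal $\bbq$-subgroup of $\bbh$ whose Lie algebra is the projection $\bar M$ of $M$ to $\hfr$, and extract from $\Ad(\bbv(\bbq))$-invariance that $d\rho(\bar M)V\subseteq W$. The difference is how the skew graph $\sigma\colon\bar M\to V/W$ (the paper's $\phi$) is killed. The paper first passes to $\bbh\ltimes(\bbv/\underline{M}')$ to arrange $M\cap V=0$, then argues that $\phi(\pr M)$ is fixed pointwise by all of $\bbh(\bbq)$ --- via $[\pr M,V]=0$ for the $\bbh_M(\bbq)$-action, via $\Ad(\bbh(\bbq))$-equivariance of $\phi$ for the centralizer, and via the factorization $\bbh(\bbq)=\bbh_M(\bbq)\,C_{\bbh(\bbq)}(\bbh_M(\bbq))$ --- and finally invokes the standing hypothesis that $\bbv$ has no nonzero $\bbh$-fixed vector to force $\phi=0$. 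You instead keep $W$, note that $M$ is an ideal of $\gfr$ (by density of $\bbg(\bbq)$), and read off $\sigma([X,X'])=d\rho(X)\sigma(X')-d\rho(X')\sigma(X)=0$ in $V/W$ from the bracket of $\hfr\ltimes V$; perfectness of the ideal $\bar M$ of the semisimple Lie algebra $\hfr$ then yields $\sigma=0$. Your argument is purely Lie-algebraic, does not use the no-fixed-point hypothesis at all (so it establishes a marginally stronger statement), and sidesteps the $\bbq$-rational factorization $\bbh(\bbq)=\bbh_M(\bbq)\,C_{\bbh(\bbq)}(\bbh_M(\bbq))$, which is cleanest when $\bbh$ is simply connected. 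The final assembly $\bbn=\bbh'\ltimes\bbv_W$ and the normality check match what the paper obtains after undoing its quotient reduction.
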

\begin{proof}	
	Let $M':=M\cap V$, and $\underline{M}'$ be the $\bbq$-subgroup of $\bbv$ which is induced by $M'$. Since $\bbh$ is semisimple, $\bbh(\bbq)$ is Zariski-dense in $\bbh$. Hence $\underline{M}'$ is invariant under the action of $\bbh$. 
	
	Passing to $\bbh\ltimes \bbv/\underline{M}'$ and $M/M'\subseteq \hfr\oplus V/M'$, we can and will assume that $V\cap M=\{0\}$. Thus projection to $\hfr$ induces an embedding and we get an $\Ad(\bbh(\bbq))$-module homomorphism $\phi:\pr M\rightarrow V$, where $\pr: \hfr\oplus V\rightarrow \hfr$ is the projection map, such that 
	\[
	M=\{(x,\phi(x))|\h x\in \pr M\}.
	\] 
	For any $v\in V$ and $x\in \pr M$, we have 
	\be\label{e:AdUnipAction}
	\Ad(1,v)(x,\phi(x))-(x,\phi(x))=(0,[x,v])\in M.
	\ee
	
	Hence by (\ref{e:AdUnipAction}) we have $[\pr M, V]=0$. On the other hand, by Lemma~\ref{l:Semisimple}, there is a normal subgroup $\bbh_M$ of $\bbh$ such that $\pr M=\Lie(\bbh_M)(\bbq)$. In particular, $\bbh_M$ is a semisimple $\bbq$-group. So we have that $\bbh_M$ acts trivially on $\bbv$. Thus $\bbh_M$ is a normal subgroup of $\bbg$.  
	
	For any $h$ in the centralizer $C_{\bbh(\bbq)}(\bbh_M(\bbq))$ of $\bbh_M(\bbq)$ in $\bbh(\bbq)$ and $x\in \pr M$, we have $\Ad(h)(\phi(x))=\phi(\Ad(h)(x))=\phi(x)$. On the other hand, since $\bbh$ is a semisimple $\bbq$-group and $\bbh_M$ is a normal $\bbq$-subgroup of $\bbh$, we have $\bbh(\bbq)=\bbh_M(\bbq)C_{\bbh(\bbq)}(\bbh_M(\bbq))$. Overall we get that $\bbh(\bbq)$ acts trivially on $\phi(\pr M)$. Hence $\phi=0$, and we get $M=(\Lie \bbh_M)(\bbq)$. 	
\end{proof}
\begin{proof}[Proof of Proposition~\ref{p:GeneratingLieAlgebra}]
	By Lemma~\ref{l:Perfect}, $\bbg_1$ is perfect. So $\bbg_1\simeq \bbh \ltimes \bbu$ where $\bbh$ is a semisimple $\bbq$-group $\bbh$  and $\bbu$ is a unipotent $\bbq$-group. Moreover $\bbv(\overline{\bbq})$ has no non-zero $\Ad(\bbh(\overline{\bbq}))$-fixed element where $\bbv=\bbu/[\bbu,\bbu]$. 
	
	Let $M$ be the $\Ad(\bbg_1(\bbq))$-module generated by $\gfr_2$. We want to show that $M=\gfr_1$. By Lemma~\ref{l:ReductionToAbelianUnipotentRadical} and Nakayama's lemma, it is enough to prove $[\ufr,\ufr]+M=\gfr_1$ where $\ufr:=\Lie (\bbu)(\bbq)$.
	
	By applying Lemma~\ref{l:AbelianUnipotentRadical} for $\bbh\ltimes \bbv$ and 
	\[
	(M+[\ufr,\ufr])/[\ufr,\ufr]\subseteq \Lie(\bbh)(\bbq)\oplus \Lie(\bbv)(\bbq)=\hfr\oplus \ufr/[\ufr,\ufr],
	\]
 we get that there is a normal $\bbq$-subgroup $\bbn$ of $\bbg_1$ such that $M+[\ufr,\ufr]=\Lie(\bbn)(\bbq)\supseteq \Lie (\bbg_2)(\bbq)$. Hence $\bbn\supseteq \bbg_2$. Since the normal closure of $\bbg_2$ in $\bbg_1$ is $\bbg_1$, we get that $\bbn=\bbg_1$. And so $M+[\ufr,\ufr]=\gfr_1$, which finishes the proof as it was explained above.  	
\end{proof}

\section{Simultaneous bounded generation of almost all the fibers: Zariski-topology.}\label{s:Zariski}

In this section we prove a {\em bounded generation} statement at the level of Zariski-topology (see Proposition~\ref{p:BGZariskiTopology}). 

To avoid recalling the definition of some well-known terms {\em within} the statements, they are mentioned here. Along the way, some needed notation is introduced.  

Let $\Spec(\bbz[1/q_0])$ be the affine scheme of the ring $\bbz[1/q_0]$; in particular, the set of its points is 
\[
\Spec(\bbz[1/q_0])=\{0\}\cup\{p\bbz[1/q_0]|\h p \text{ is a prime integer}, p\nmid q_0 \}.
\]
 The point $(0)$ is called the {\em generic} point (as it is dense). For any $p\in \Spec(\bbz[1/q_0])$, the residue field over $p\bbz[1/q_0]$ is denoted by $k(p)$; that means $k(p)$ is either the finite field with $p$ elements if $p\neq 0$, or $\bbq$ if $p=0$. For any field $F$, its algebraic closure is denoted by $\overline{F}$. 
 
 Here we have to work with {\em affine, finite type, reduced, flat} group schemes $\gcal$ over $\bbz[1/q_0]$; that means, as a scheme $\gcal=\Spec(A)$ where $A$ is a finitely generated $\bbz[1/q_0]$-algebra with no non-zero nilpotent element and no additively torsion element, and in addition $A$ has a Hopf algebra structure.        For an affine group scheme $\gcal=\Spec(A)$ over $\bbz[1/q_0]$, its fiber over $p\in \Spec(\bbz[1/q_0])$ is denoted by $\gcal^{(p)}$; that means $\gcal^{(p)}=\Spec(A\otimes_{\bbz[1/q_0]}k(p))$. For a group scheme $\gcal$ over $\bbz[1/q_0]$, its geometric fiber over $p\in \Spec(\bbz[1/q_0])$ is denoted by $\bbg^{(p)}$; that means $\bbg^{(p)}:=\Spec(A\otimes_{\bbz[1/q_0]}\overline{k(p)})$. The fiber $\gcal^{(0)}$ over $(0)$ is called the generic fiber, and the geometric fiber $\bbg^{(0)}$ is sometimes denoted by $\bbg$. A morphism $f:\Spec(A)\rightarrow\Spec(B)$ between two schemes is called {\em dominant} if its image is dense. 
 
 For any non-zero integer $q$, any homomorphism induced by the quotient map $\pi_q:\bbz[1/q_0]\rightarrow \bbz[1/q_0]/q\bbz[1/q_0]$ is still denoted by $\pi_q$. We let $\pi_0$ denote all the homomorphisms induced by the embedding $\pi_0:\bbz[1/q_0]\rightarrow \bbq$. In particular, for a group scheme $\gcal$ over $\bbz[1/q_0]$ and $p\bbz[1/q_0]\in \Spec(\bbz[1/q_0])$, we get a group homomorphism $\pi_p:\gcal(\bbz[1/q_0])\rightarrow \gcal^{(p)}(k(p))$. 
 
 The normal closure of an algebraic subgroup $\bbg_2$ of $\bbg_1$ is the smallest normal closed subgroup of $\bbg_2$ in $\bbg_1$. 

\begin{prop}\label{p:BGZariskiTopology}
 Let $\Gamma_2\subseteq \Gamma_1$ be subgroups of $\GL_{n_0}(\bbz[1/q_0])$. 
 For $i=1,2$, let $\gcal_i$ be the Zariski-closure of $\Gamma_i$ in $(\GL_{n_0})_{\bbz[1/q_0]}$, and suppose the geometric generic fiber $\bbg_i$ of $\gcal_i$ is irreducible. Suppose  
 the normal closure of $\bbg_2$ in $\bbg_1$ is $\bbg_1$. Then there are $\gamma_1,\ldots, \gamma_c$ in $\Gamma_1$ such that the following is a surjective morphism
 \[
 f^{(p)}_{(\gamma_i)}(h_1,\ldots,h_c):\bbg_2^{(p)}\times \cdots \times \bbg_2^{(p)} \rightarrow \bbg_1^{(p)}, \hspace{.5cm} f^{(p)}_{(\gamma_i)}(h_1,\ldots,h_c):=\pi_p(\gamma_1)h_1\pi_p(\gamma_1)^{-1}\cdots  \pi_p(\gamma_c)h_c\pi_p(\gamma_c)^{-1},
 \]     
 where $p\bbz[1/q_0]$ ranges in a non-empty open subset of $\Spec(\bbz[1/q_0])$; that means $p\bbz[1/q_0]$ can be any prime ideal except for finitely many non-zero ones. Furthermore $\bbg_i^{(p)}$ are Zariski-connected. 
\end{prop}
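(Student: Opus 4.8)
\textbf{Proof proposal for Proposition~\ref{p:BGZariskiTopology}.}

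The plan is to bootstrap from the infinitesimal statement of Proposition~\ref{p:GeneratingLieAlgebra} to a generic-fiber dominance statement, and then spread it out over $\Spec(\bbz[1/q_0])$. First I would pass to the geometric generic fiber and set up a single morphism whose differential at the identity I can control. Consider the map
\[
F_{(\gamma_i)}(h_1,\ldots,h_c):=\gamma_1 h_1\gamma_1^{-1}\cdots\gamma_c h_c\gamma_c^{-1}\colon \bbg_2\times\cdots\times\bbg_2\to\bbg_1,
\]
evaluated at the identity point $(e,\ldots,e)$, which lands on $e\in\bbg_1$. Its differential there is the linear map $(X_1,\ldots,X_c)\mapsto \sum_i \Ad(\gamma_i)(X_i)$, with image the subspace of $\gfr_1:=\Lie(\bbg_1)(\qbar)$ spanned by $\bigcup_i \Ad(\gamma_i)(\gfr_2)$. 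By Proposition~\ref{p:GeneratingLieAlgebra}, $\gfr_2$ generates $\gfr_1$ as a $\bbg_1(\qbar)$-module under the adjoint action; since $\Gamma_1$ is Zariski-dense in $\bbg_1$ (here using that $\bbg_1$, equivalently $\bbg_i$, is Zariski-connected, which one arranges by the Step~0 reductions, or else one works with the connected component and notes $\Gamma_1$ is dense in $\bbg_1$ up to finite index), the matrices $\Ad(\gamma)$, $\gamma\in\Gamma_1$, already span the same subalgebra of $\End(\gfr_1)$ as $\Ad(\bbg_1(\qbar))$. Hence finitely many $\gamma_1,\ldots,\gamma_c\in\Gamma_1$ can be chosen so that $\sum_i \Ad(\gamma_i)(\gfr_2)=\gfr_1$. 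For this choice the differential of $F_{(\gamma_i)}$ at $(e,\ldots,e)$ is surjective, so $F_{(\gamma_i)}$ is smooth at that point and in particular dominant; since $\bbg_1$ is a connected group and the image of a dominant morphism contains a dense open set, and since $\bbg_1$ is covered by translates of that open set by $\bbg_1(\qbar)$-points arising from $F$ itself (using that $F(g_1 h_1,\ldots)=\,$... adjusted — more cleanly, the image is a constructible dense subset of a homogeneous space), a standard argument (the image of $F_{(\gamma_i)}\cdot F_{(\gamma_i)}$, say) upgrades dominance to surjectivity of $F_{(\gamma_i)}$ or of a bounded self-product; in any case one obtains surjective $f^{(0)}_{(\gamma_i)}$ on geometric generic fibers, possibly after enlarging $c$.

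Next I would spread this out. Let $\gcal_i=\Spec(A_i)$ be the flat $\bbz[1/q_0]$-models of Proposition/Lemma~\ref{l:FlatModel}, and consider the $\bbz[1/q_0]$-morphism of schemes $f_{(\gamma_i)}\colon \gcal_2\times_{\bbz[1/q_0]}\cdots\times_{\bbz[1/q_0]}\gcal_2\to\gcal_1$ defined by the same formula (the $\gamma_i$ lie in $\GL_{n_0}(\bbz[1/q_0])$, so conjugation is defined integrally). By the previous paragraph the generic fiber of $f_{(\gamma_i)}$ is surjective (equivalently, dominant with the target irreducible). Now invoke Chevalley's theorem on constructible images together with the standard ``spreading out'' / constructibility results for morphisms of finite type (e.g. \cite[EGA IV]{} — but since the paper may not cite EGA, I would phrase it via generic flatness / Noetherian induction on $\Spec(\bbz[1/q_0])$): the locus of $p\in\Spec(\bbz[1/q_0])$ over which $f^{(p)}_{(\gamma_i)}$ is surjective onto $\bbg_1^{(p)}$ is constructible and contains the generic point, hence contains a non-empty open set, i.e. all but finitely many primes $p\nmid q_0$. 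Concretely: dominance on the generic fiber means the corresponding Hopf-algebra map $A_1\otimes\bbq\to (A_2^{\otimes c})\otimes\bbq$ is injective; clearing denominators, $A_1\otimes\bbz[1/N]\to A_2^{\otimes c}\otimes\bbz[1/N]$ is injective with flat cokernel for suitable $N$, so it stays injective after $\otimes\,\overline{k(p)}$ for $p\nmid N$, giving dominance of $f^{(p)}_{(\gamma_i)}$ fiberwise; and dominance plus the group/homogeneity argument as above (performed inside each fiber, where $\bbg_1^{(p)}$ is connected — see below) gives surjectivity, after possibly passing to a bounded product, which only changes $c$.

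Finally, for the ``Furthermore'' clause: one must know that $\bbg_i^{(p)}$ is Zariski-connected for all but finitely many $p$. This is again a spreading-out fact — geometric connectedness of fibers of a finite-type morphism is constructible and holds at the generic point (where $\bbg_i$ is irreducible by hypothesis), hence on a dense open set; alternatively it follows from the existence, for all but finitely many $p$, of a smooth connected model of $\bbg_i$ with geometrically connected fibers (generic smoothness plus the fact that $\bbg_i$ connected forces connected nearby fibers). I would cite the relevant statement from one of the companion papers (\cite{SG:SAII} or \cite{SGV}) where exactly this kind of integral model is constructed, rather than reproving it.

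I expect the main obstacle to be the passage from \emph{dominance} of $F_{(\gamma_i)}$ (which is immediate from the surjective differential) to honest \emph{surjectivity} of $f^{(p)}_{(\gamma_i)}$ onto $\bbg_1^{(p)}$, uniformly in $p$. On a connected algebraic group the image of a dominant morphism is a dense constructible set, and one promotes it to everything by multiplying boundedly many copies of that set — but one needs this boundedness to be independent of $p$, which is why the argument must be run \emph{integrally} (via the flat models and generic flatness) rather than fiber by fiber. Keeping track of the reductions of Step~0 (so that the relevant groups are genuinely connected over each $\overline{k(p)}$) is the bookkeeping one has to be careful with; the underlying geometry is the differential computation plus Proposition~\ref{p:GeneratingLieAlgebra}.
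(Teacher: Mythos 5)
Your proposal takes a genuinely different route from the paper --- via the surjectivity of the differential at the identity and Proposition~\ref{p:GeneratingLieAlgebra} --- but this route has a real gap. Proposition~\ref{p:GeneratingLieAlgebra} requires $\bbg_2$ to be a \emph{perfect} $\bbq$-subgroup (and, by Lemma~\ref{l:Perfect}, then so is $\bbg_1$), whereas Proposition~\ref{p:BGZariskiTopology} assumes only that the $\bbg_i$ are irreducible and that the normal closure of $\bbg_2$ in $\bbg_1$ is $\bbg_1$. As stated, the proposition does not provide you with perfectness, so you are not entitled to invoke the Lie-algebra generation result. In the paper's eventual applications (Sections~5--7, where hypothesis (A5) is in force) perfectness does hold, so your argument would cover those; but it does not prove the proposition as written.

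The paper's actual proof avoids the Lie algebra entirely and is purely Zariski-topological. With $\bbv:=\overline{\im f^{(0)}_{(\gamma_i)}}$ (irreducible since $\bbg_2$ is), either $\overline{\bbv\cdot\bbv^{-1}\cdot\bbv}\neq\bbv$, in which case tripling the list of $\gamma_i$'s strictly increases $\dim\bbv$; or $\bbv$ is a subgroup, and the normal-closure hypothesis forces it to be non-normal (a normal $\bbv\supseteq\gamma_1\bbg_2\gamma_1^{-1}$ would contain $\bbg_2$ and hence equal $\bbg_1$), so Zariski-density of $\Gamma_1$ gives $\gamma$ with $\gamma\bbv\gamma^{-1}\cdot\bbv\neq\bbv$, again increasing the dimension of the image. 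Iterating at most $\dim\bbg_1$ times yields a dominant $f^{(0)}_{(\gamma_i)}$. The rest --- spreading out dominance and geometric irreducibility to almost all fibers via constructibility \cite[Prop.\ 9.6.1, Thm.\ 9.7.7, Thm.\ 12.2.4]{EGA}, and upgrading dominance to surjectivity via Chevalley's theorem together with the identity $U\cdot U=\bbg_1^{(p)}$ for a dense open $U$ in a connected group \cite[Chapter 7.4]{Hum} --- matches your proposal closely; your ``clearing denominators'' variant of the spreading-out is a legitimate alternative to the EGA citation, and the hesitation you express about dominance-to-surjectivity is unnecessary given Humphreys' lemma. The paper's dimension-increase argument is preferable precisely because it works under the stated (weaker) hypotheses and defers the Lie-algebra analysis to where it is genuinely needed (Lemma~\ref{l:LieAlgebraExpansion}).
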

Let us recall parts of Proposition 9.6.1, Theorem 9.7.7 and Theorem 12.2.4 from \cite{EGA} where {\em constructibility} of being a dominant morphism (see~\cite[Chapter I.3.3]{DG} for definition of a constructible set) and {\em genericness} of dimension, being smooth, and being geometrically irreducible for the fibers of a $\bbz[1/q_0]$-scheme are proved (see~\cite[Chapter I.4.4]{DG} for definition of a smooth morphism and see~\cite[Theorem 40, Lemma 42]{SGV} for an effective version of the former results). 

\begin{thm}\label{t:GenericnessConstructiblityDominant}
\begin{enumerate}
	\item Let $\vcal$ be an affine $\bbz[1/q_0]$-scheme of finite type. Suppose the generic fiber $\vcal^{(0)}$ of $\vcal$ is smooth and geometrically irreducible; that is to say $\bbv^{(0)}:=\vcal^{(0)}\times_{\Spec(\bbq)}\Spec(\overline{\bbq})$ is irreducible. Then there is $q_1\in \bbz\setminus\{0\}$ such that 
	\begin{enumerate}
		\item For any $p\nmid q_0q_1$, the fiber $\vcal^{(p)}$ of $\vcal$ over $p\bbz[1/q_0]$ is a smooth $k(p)$-scheme,
		\item For any $p\nmid q_0q_1$, $\vcal^{(p)}$ is geometrically irreducible; that is to say $\bbv^{(p)}:=\vcal^{(p)}\times_{\Spec(k(p))}\Spec(\overline{k(p)})$ is irreducible,
		\item For any $p\nmid q_0q_1$, $\dim \vcal^{(0)}=\dim \vcal^{(p)}=\dim \bbv^{(p)}$. 
	\end{enumerate}
	\item Let $\vcal$ and $\wcal$ be two affine, of finite type, $\bbz[1/q_0]$-schemes. Let $f:\vcal\rightarrow \wcal$ be a $\bbz[1/q_0]$-morphism. Let $\bbv^{(p)}$ and $\bbw^{(p)}$ be the geometric fibers over $p\bbz[1/q_0]\in \Spec(\bbz[1/q_0])$ of $\vcal$ and $\wcal$, respectively. Suppose $\bbv^{(0)}$ and $\bbw^{(0)}$ are irreducible and $f^{(0)}:\bbv^{(0)}\rightarrow \bbw^{(0)}$ is dominant. Then there is $q_1\in \bbz\setminus\{0\}$ such that 
	\[
	f^{(p)}:\bbv^{(p)}\rightarrow \bbw^{(p)}
	\]  
	is dominant for any $p\nmid q_0q_1$.
\end{enumerate}
\end{thm}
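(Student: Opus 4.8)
The plan is to deduce both parts from the ``spreading out'' formalism of \cite{EGA}, organized around one elementary observation about the base: since $\Spec(\bbz[1/q_0])$ is a one--dimensional integral Noetherian scheme whose unique generic point is $(0)$, every subset $C\subseteq\Spec(\bbz[1/q_0])$ that is constructible (in the sense of \cite[Chapter I.3.3]{DG}) and contains $(0)$ contains a dense open, hence contains $\{p\bbz[1/q_0]:p\nmid q_0q_1\}$ for some nonzero integer $q_1$. Indeed, writing $C$ as a finite union of locally closed sets, the piece containing $(0)$ has closed part equal to the whole space (the closure of the generic point), so it is a nonempty open, i.e. cofinite. Consequently, for each property $P$ occurring in the statement it suffices to show that $\{p\bbz[1/q_0]:\vcal^{(p)}\ (\text{resp. }f^{(p)})\text{ satisfies }P\}$ is constructible and contains the generic point; all the successive shrinkings below are of this cofinite type and get absorbed into a single $q_1$.

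For part (1), first invoke generic flatness \cite[IV, 6.9.1]{EGA} to find $q_1'\neq0$ with $\vcal$ flat over $U:=\Spec(\bbz[1/q_0q_1'])$. (a) As $\vcal\to U$ is now flat and of finite presentation, its non--smooth locus is closed in $\vcal$ \cite[IV, 17.5.1]{EGA}, so by Chevalley's theorem its image in $U$ is constructible; that image omits $(0)$ since $\vcal^{(0)}$ is smooth, hence is finite, and after shrinking $U$ every $\vcal^{(p)}$ is smooth over $k(p)$. (b) By the genericness of geometric irreducibility of fibres \cite[IV, 9.7.7]{EGA} the set of $p\in U$ with $\bbv^{(p)}$ irreducible is locally constructible, hence (as $U$ is Noetherian) constructible, and it contains $(0)$ by hypothesis; shrink again. (c) After (a) and (b), flatness over the integral scheme $U$ forbids vertical components, so $\vcal$ is irreducible with $\vcal^{(0)}$ dense; the dimension count for flat finite--type morphisms \cite[IV, 14.3.10]{EGA} then gives $\dim\bbv^{(p)}=\dim\vcal^{(p)}=\dim\vcal^{(0)}$ for all $p\in U$ (equivalently, combine the semicontinuity of fibre dimension with flatness, as in \cite[Lemma 42]{SGV}).

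For part (2), I would first replace $\wcal$ by the scheme--theoretic image of $f$; since $f$ is affine this formation commutes with the flat base change $\Spec(\bbq)\to\Spec(\bbz[1/q_0])$, and $f^{(0)}$ being dominant it does not change the generic fibre topologically, so the open subscheme $\wcal\setminus(\text{image})$ has empty generic fibre and hence, by Chevalley plus the opening observation, empty fibre over all but finitely many $p$. Applying part (1) to $\wcal$ as well, after a further shrinking I may assume $\wcal$ is irreducible with $\wcal^{(0)}$ dense, each $\wcal^{(p)}$ geometrically irreducible with generic point $\eta_p$, and $\eta_0$ the generic point of $\wcal$. By Chevalley the image $f(\vcal)\subseteq\wcal$ is constructible, and since $f^{(0)}$ is dominant it contains $\eta_0$, hence a dense open $O\subseteq\wcal$. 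The open subscheme $O$ is of finite type over $\bbz[1/q_0]$ with nonempty generic fibre, so $O\cap\wcal^{(p)}\neq\emptyset$ for all but finitely many $p$; for such $p$ the set $O\cap\wcal^{(p)}$ is a nonempty open of the irreducible space $\wcal^{(p)}$ and therefore contains $\eta_p$, whence $\eta_p\in f(\vcal)$, i.e.\ $f^{(p)}:\vcal^{(p)}\to\wcal^{(p)}$ is dominant; passing to $\overline{k(p)}$ gives that $f^{(p)}:\bbv^{(p)}\to\bbw^{(p)}$ is dominant.

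The real content sits entirely in the cited \cite{EGA} facts — generic flatness, openness of the smooth locus, Chevalley's constructibility theorem, the genericness of geometric irreducibility of fibres, and compatibility of the scheme--theoretic image with flat base change; the combinatorics of the base is trivial because $\Spec(\bbz[1/q_0])$ is one--dimensional. The step I expect to require the most care is part (2): one must resist conflating the (geometric) generic point of the fibre $\bbw^{(p)}$ with anything lying over the generic point of the base, and the clean way around this is precisely to pass first to the scheme--theoretic image and to feed $\wcal$ through part (1), so that each $\wcal^{(p)}$ acquires an honest generic point $\eta_p$ which can then be tested for membership in the constructible set $f(\vcal)$.
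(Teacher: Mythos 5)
Your argument is correct and fills in what the paper leaves implicit: the paper's own ``proof'' is a one--line citation to \cite[Proposition 9.6.1, Theorem 9.7.7, Theorem 12.2.4]{EGA}, so you have essentially reconstructed the spreading--out content those references encode, using the same toolbox (generic flatness, Chevalley, openness of the smooth locus, genericness of geometric irreducibility, dimension of fibres in a flat family) and the same organizing observation that a constructible subset of $\Spec(\bbz[1/q_0])$ containing the generic point is cofinite. The one small imprecision is in part (2), where you say ``applying part (1) to $\wcal$ as well'': the hypotheses of part (2) do not give smoothness of $\wcal^{(0)}$, so part (1a) is not available for $\wcal$; but you only actually use the geometric irreducibility (1b) and the dimension statement (1c), and those two are obtained from generic flatness plus \cite[IV, 9.7.7]{EGA} without any smoothness input, so the argument goes through once this is phrased as invoking (1b)--(1c) rather than all of part (1). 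Two further remarks that may be worth making explicit if this were written up in full: the passage to the scheme--theoretic image is not strictly needed once generic flatness has killed vertical components (the ``constructible image contains the generic point, hence a dense open $O$'' step works directly on the original $\wcal$), and in the final step one should note explicitly that $f(\vcal)\cap\wcal^{(p)}=f^{(p)}(\vcal^{(p)})$ so that $\eta_p$ lying in the constructible set $f(\vcal)$ really does witness dominance of $f^{(p)}$ on the $k(p)$--fibre, after which base change to $\overline{k(p)}$ finishes.
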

\begin{proof}
	As it is mentioned earlier, these are special cases of~\cite[Proposition 9.6.1, Theorem 9.7.7, Theorem 12.2.4]{EGA}. 
\end{proof}
\begin{proof}[Proof of Proposition~\ref{p:BGZariskiTopology}] We start by getting a dominant map on the geometric fiber of the generic point.	
	{\bf Claim 1.} For a given $(\gamma_i)_{i=1}^{c_1}\subseteq \Gamma_1$, if $f^{(0)}_{(\gamma_i)_{i=1}^{c_1}}$ is not dominant, then there are $\gamma_{c_1+1},\ldots,\gamma_{c_2}\in \Gamma_1$ such that $\dim\left(\overline{\im (f^{(0)}_{(\gamma_i)_{i=1}^{c_2}})}\right)>\dim\left(\overline{\im (f^{(0)}_{(\gamma_i)_{i=1}^{c_1}})}\right)$. 
	
	{\em Proof of Claim 1.} Since $\bbg_2$ is irreducible, $\bbv:=\overline{\im (f^{(0)}_{(\gamma_i)})}$ is irreducible. Therefore $\overline{\bbv\cdot \bbv^{-1}\cdot \bbv}$ is irreducible, too. If $\overline{\bbv\cdot \bbv^{-1}\cdot \bbv}\neq \bbv$, then $\dim \overline{\bbv\cdot \bbv^{-1}\cdot \bbv}>\dim \bbv$ and we get the desired claim as 
		\[
		\overline{\bbv\cdot \bbv^{-1}\cdot \bbv}=\overline{\im( f^{(0)}_{(\gamma_i)_{i=1}^{c_2}})},
		\]
		 where $c_2:=3c_1$ and $\gamma_{kc_1+i}:=\gamma_i$ for $0\le k\le 2$ and $1\le i\le c_1$.
	
	So we can and will assume that  $\overline{\bbv\cdot \bbv^{-1}\cdot \bbv}= \bbv$, which implies that $\bbv$ is a subgroup of $\bbg_1$. Since the normal closure of $\bbg_2$ in $\bbg_1$ is $\bbg_1$, $\bbv$ cannot be a normal subgroup of $\bbg_1$. Since $\Gamma_1$ is Zariski-dense in $\bbg_1$, there is $\gamma\in\Gamma_1$ such that $\gamma \bbv \gamma^{-1} \cdot \bbv\neq \bbv$. Again by irreducibility of $\bbv$ and $\overline{\gamma \bbv \gamma^{-1} \cdot \bbv}$ we get the desired claim.
	 
	{\bf Claim 2.} There are $\gamma_1,\ldots,\gamma_{c_1}\in \Gamma_1$ such that $f^{(0)}_{(\gamma_i)}$ is a dominant map.
	
	{\em Proof of Claim 2.} Applying Claim 1 for at most $\dim \bbg_1$ many times, we get $\gamma_1,\ldots,\gamma_{c_{\dim \bbg_1}}\in\Gamma_1$ such that $f^{(0)}_{(\gamma_i)}$ is dominant.
	
	{\bf Claim 3.} There are $q_1\in \bbz\setminus\{0\}$ and $\gamma_1,\ldots,\gamma_{c_1}\in \Gamma_1$ such that for any $p\nmid q_0q_1$ (including $p=0$) we have that $f^{(p)}_{(\gamma_i)}$ is dominant and $\bbg_i^{(p)}$ is irreducible.
	
	{\em Proof of Claim 3.} Let $\gamma_i$'s be as in Claim 2. Now we can get the rest using Theorem~\ref{t:GenericnessConstructiblityDominant} parts 1(b) and (2).
	
	{\bf Claim 4.} There are $\gamma_1,\ldots,\gamma_{c}\in \Gamma_1$ and $q_1\in \bbz\setminus\{0\}$ such that for any $p\nmid q_0q_1$ (including $p=0$) we have that
	$f^{(p)}_{(\gamma_i)}$ is surjective.
	
	{\em Proof of Claim 4.} Since $\overline{k(p)}$ is algebraically closed, it is enough to find $\gamma_i$ such that $f^{(p)}_{(\gamma_i)}$ induces a surjection from $\bbg_2^{(p)}(\overline{k(p)})\times \cdots \times \bbg_2^{(p)}(\overline{k(p)})$ to $\bbg_1^{(p)}(\overline{k(p)})$ (see~\cite[Corollary 11, Chapter I.3.6]{DG}). Let $\gamma_1,\ldots,\gamma_{c_1}$ be as in Claim 3. So by Chevalley's theorem~\cite[Chapter 4.4]{Hum}, there is an open and dense subset $U_p\subseteq \bbg_1^{(p)}(\overline{k(p)})$ of $\im(f^{(p)}_{(\gamma_i)})$. Hence by \cite[Chapter 7.4]{Hum} we have $\bbg_1^{(p)}(\overline{k(p)})=U_p\cdot U_p$. Therefore $f^{(p)}_{(\gamma_i)_{i=1}^{c}}$ is surjective where $c:=2c_1$ and $\gamma_{c_1+i}:=\gamma_i$ for any $1\le i \le c_1$.
\end{proof}

\section{Large congruence subgroup: for an arbitrary place.}\label{s:LargeConrguence}

In this section, we will prove a bounded generation statement in $p$-adic setting for {\em arbitrary $p$} (see Proposition~\ref{p:LargeCongruence}). 

Besides some well-known terms from algebraic geometry that has been already introduced in Section~\ref{s:Zariski}, we need to recall some terms from algebraic group theory. An algebraic group $\bbg$ is called {\em perfect} if there is no non-trivial homomorphism to an abelian algebraic group; that is to say the derived subgroup $[\bbg,\bbg]$ is equal to $\bbg$. A perfect group $\bbg$ is isomorphic to $\bbl\ltimes \bbu$ where $\bbl$ is a semisimple group and $\bbu$ is a unipotent group. We say a perfect group $\bbg$ is simply connected if its semisimple part is simply connected (see~\cite[Section 3]{SGV} for relevant basic properties of perfect algebraic groups). 

To avoid repeating a series of assumptions for the statements of this section, we record them here for the future reference. 

\begin{itemize}
	\item[(A1)] $\Gamma_2\subseteq \Gamma_1$ are finitely generated subgroups of $\GL_{n_0}(\bbz[1/q_0])$.
	\item[(A2)] $\gcal_i$ is the Zariski-closure of $\Gamma_i$ in $(\GL_{n_0})_{\bbz[1/q_0]}$. 
	\item[(A3)] For any $p\in \Spec(\bbz[1/q_0])$, $\gcal_i^{(p)}$ is the fiber over $p$ of $\gcal_i$.
	\item[(A4)] For any $p\in \Spec(\bbz[1/q_0])$, $\bbg_i^{(p)}$ is the geometric fiber of $\gcal_i^{(p)}$.
	\item[(A5)] $\bbg_i:=\bbg_i^{(0)}$ are perfect and $\bbg_1$ is simply-connected.
	\item[(A6)] For $\gamma_i\in\Gamma_1$ and $p\in \Spec(\bbz[1/q_0])$, $f_{p,(\gamma_i)}:\gcal_2^{(p)}\times \cdots \times \gcal_2^{(p)}\rightarrow \gcal_1^{(p)}$ and $f^{(p)}_{(\gamma_i)}: \bbg_2^{(p)}\times \cdots \times \bbg_2^{(p)}\rightarrow \bbg_1^{(p)}$ be the morphisms that are given by 
	\[
	(h_i)\mapsto \pi_p(\gamma_1)h_1\pi_p(\gamma_1)^{-1}\cdots\pi_p(\gamma_c)h_1\pi_p(\gamma_c)^{-1}.
	\]
	\item[(A7)] For some $\gamma_1,\ldots,\gamma_{m_0}\in \Gamma_1$, $f^{(p)}_{(\gamma_i)}$ is surjective if $p$ is large enough. 
\end{itemize}

\begin{prop}\label{p:LargeCongruence}
	In the setting of (A1)-(A7), there are a positive integer $N$ and $\gamma_{m_0+1},\ldots,\gamma_{m_1}\in \Gamma$,such that for any prime $p\nmid q_0$ we have 
	\[
	\Gamma_{1,p}[p^N]\subseteq \gamma_1 \Gamma_{2,p} \gamma_1^{-1} \cdots \gamma_{m_1} \Gamma_{2,p} \gamma_{m_1}^{-1},
	\] 
	where $\Gamma_{i,p}$ is the closure of $\Gamma_i$ in $\GL_{n_0}(\bbz_p)$ and $\Gamma_{1,p}[p^N]:=\Gamma_{1,p}\cap \ker(\gcal_1(\bbz_p)\xrightarrow{\pi_{p^N}}\gcal_1(\bbz/p^N\bbz))$.
\end{prop}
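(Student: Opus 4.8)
The plan is to combine the Zariski-topological bounded generation (which after (A7) gives surjective maps $f^{(p)}_{(\gamma_i)}$ on the geometric fibers for large $p$) with a quantitative $p$-adic open-function theorem, the latter being uniform in $p$. First I would recall that by Proposition~\ref{p:BGZariskiTopology} (which is exactly assumption (A7) together with the Zariski-connectedness of the fibers) there are $\gamma_1,\dots,\gamma_{m_0}\in\Gamma_1$ making $f^{(p)}_{(\gamma_i)}:\bbg_2^{(p)}\times\cdots\times\bbg_2^{(p)}\to\bbg_1^{(p)}$ surjective for all but finitely many $p$, including $p=0$; and by Proposition~\ref{p:AnalyticIsAlgebraic} together with Lemma~\ref{l:FlatModel}, for an integer $c\ge c_0$ the logarithm identifies $\Gamma_{i,p}[p^c]$ with $\Lie(\gcal_i)(\bbz_p)\cap p^c\gl_{n_0}(\bbz_p)$. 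Since the differential at the identity of the multiplication-by-conjugates map $f_{p,(\gamma_i)}$ is a fixed linear map $L_p:\prod\Lie(\gcal_2^{(p)})\to\Lie(\gcal_1^{(p)})$ whose reduction mod $p$ equals the differential of $f^{(p)}_{(\gamma_i)}$, surjectivity of $f^{(p)}_{(\gamma_i)}$ at a smooth point forces $L_p\bmod p$ to be surjective for $p$ large, hence $L_p$ is surjective with bounded ``denominators'' uniformly in $p$.

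The core analytic input is the quantitative inverse/open function theorem for $p$-adic analytic maps, \cite[Lemma 45']{SG:SAII}: if $F:\bbz_p^a\to\bbz_p^b$ is analytic with $\|F(0)\|_p$ small and the Jacobian $(dF)_0$ has a right inverse of bounded norm, then the image of a small ball around $0$ contains a small ball around $F(0)$, with all constants depending only on $n_0$ (and a uniform bound that works for every prime not dividing $q_0$). I would apply this to the map $(x_1,\dots,x_{m_0})\mapsto \log f_{p,(\gamma_i)}(\exp x_1,\dots,\exp x_{m_0})$ from a product of balls in the $\Lie(\gcal_2^{(p)})(\bbz_p)$'s into $\Lie(\gcal_1^{(p)})(\bbz_p)$. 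The surjectivity of $L_p$ with uniformly bounded right inverse (which I get from the previous paragraph, possibly after discarding finitely many small primes and treating those separately by brute force since for a single $p$ one can always find $N$ by compactness) supplies the Jacobian hypothesis uniformly in $p$; the conclusion is that there is a fixed $N$ with $\Lie(\gcal_1)(\bbz_p)\cap p^N\gl_{n_0}(\bbz_p)\subseteq \log\big(\gamma_1\Gamma_{2,p}\gamma_1^{-1}\cdots\gamma_{m_0}\Gamma_{2,p}\gamma_{m_0}^{-1}\big)$ for every $p\nmid q_0$. Exponentiating back and using that $\log$ identifies $\Gamma_{1,p}[p^N]$ with $\Lie(\gcal_1)(\bbz_p)\cap p^N\gl_{n_0}(\bbz_p)$ gives $\Gamma_{1,p}[p^N]\subseteq \gamma_1\Gamma_{2,p}\gamma_1^{-1}\cdots\gamma_{m_0}\Gamma_{2,p}\gamma_{m_0}^{-1}$; the extra generators $\gamma_{m_0+1},\dots,\gamma_{m_1}$ in the statement are there only to absorb the fact that the open-function theorem produces a \emph{product} of $\exp$ of small balls rather than a single one, so one repeats the block $(\gamma_1,\dots,\gamma_{m_0})$ a bounded number of times (the ``$3$-fold'' trick, or simply enough copies so the product of the image balls fills $\Gamma_{1,p}[p^N]$).

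The finitely many excluded small primes from Proposition~\ref{p:BGZariskiTopology} need separate attention: for each such fixed $p$, $\Gamma_{2,p}$ is an open subgroup of $\gcal_2(\bbz_p)$ whose closure's Lie algebra is $\Lie(\gcal_2^{(p)})(\bbz_p)$, and since the normal closure of $\bbg_2^{(p)}$ in $\bbg_1^{(p)}$ is still $\bbg_1^{(p)}$ (this is inherited as the fibers are still connected and $\Gamma_2\subseteq\Gamma_1$), Proposition~\ref{p:GeneratingLieAlgebra} applied over $\bbq_p$ together with the same $p$-adic open function theorem (for a single prime no uniformity is needed) yields an $N_p$ that works; taking $N$ to be the maximum over the finite bad set and the uniform $N$ handles all $p$ at once. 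The step I expect to be the main obstacle is the \emph{uniformity in $p$} of the Jacobian right-inverse bound: one must check that the $\gamma_i$ coming from the $\bbq$-level argument can be chosen once and for all so that the linear map $L_p$ over $\bbz_p$ is surjective with a $p$-independent bound on the norm of its right inverse. This is where one uses that the $\gamma_i$ lie in $\Gamma_1\subseteq\GL_{n_0}(\bbz[1/q_0])$ (so their reductions are controlled), that $L_p$ is the reduction of a single $\bbz[1/q_0]$-linear map coming from the generic fiber, and a Smith-normal-form / elementary-divisor argument: the cokernel of the integral model of $L$ is a finitely generated torsion $\bbz[1/q_0]$-module, so its $p$-primary part vanishes for all but finitely many $p$ and has bounded exponent at the remaining ones, giving exactly the needed uniform control.
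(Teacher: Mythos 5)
Your overall strategy — linearize via $\log$ and $\exp$, apply the quantitative $p$-adic open-function theorem \cite[Lemma 45']{SG:SAII}, and get uniformity across $p$ by controlling a single integral model of the differential — is the same one the paper uses, and your Smith-normal-form remark at the end is the right way to think about the $|a_0|_p$-type lower bound on the Jacobian. But there is a genuine gap at the very first step, and it causes you to misread what $\gamma_{m_0+1},\ldots,\gamma_{m_1}$ are doing.

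You claim that surjectivity of $f^{(p)}_{(\gamma_i)_{i=1}^{m_0}}$ (assumption (A7)) forces the differential of the conjugate-product map at the identity to be surjective mod $p$. That does not follow: the identity is a specific point, not a generic one, and a dominant (even surjective) morphism of smooth varieties need not have surjective differential there. The differential of $f_{p,(\gamma_i)}$ at $(1,\ldots,1)$ is the linear map $(x_i)\mapsto\sum_i\Ad(\gamma_i)(x_i)$, and there is no reason this should carry $\prod_i\gfr_{2,p}$ onto $\gfr_{1,p}$ just because the polynomial map $f^{(p)}$ is onto. In the paper, surjectivity of this linear map at the identity is arranged separately: Lemma~\ref{l:LieAlgebraExpansion} (which is where Proposition~\ref{p:GeneratingLieAlgebra} actually enters the argument, not merely as a patch for small primes) produces $\dim\bbg_1$ additional elements $\gamma_1',\ldots,\gamma_{\dim\bbg_1}'\in\Gamma_1$ with $\Lie(\bbg_1)(\bbq)=\sum_i\Ad(\gamma_i')(\Lie(\bbg_2)(\bbq))$, and one \emph{appends} these as $\gamma_{m_0+1},\ldots,\gamma_{m_1}$ to the list. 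That is the entire content of the extra generators in the statement; they are not repeats of the block $(\gamma_1,\ldots,\gamma_{m_0})$, and the product-of-balls issue you invoke to explain them does not arise (the conjugate-product structure is already built into $F$, and \cite[Lemma 45']{SG:SAII} directly yields a ball in the target).

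A smaller but also real omission: \cite[Lemma 45']{SG:SAII} requires the Taylor coefficients of the analytic map to lie in $\bbz_p$, but the map $F=\log\circ\mathrm{Prod}\circ\mathrm{Conj}\circ\exp$ has Baker--Campbell--Hausdorff coefficients with denominators whose $p$-adic valuation grows with the monomial degree. The paper handles this by bounding $|c_{\mathbf i}|_p\le p^{m_2\|\mathbf i\|_1}$ and rescaling $F_1(\mathbf x):=F(p^{cm_2}\mathbf x)$ to push all coefficients into $\bbz_p$ before invoking the open-function theorem; your sketch does not address this normalization, and without it the lemma does not apply.

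Once you insert the Lie-algebra-generation step via Lemma~\ref{l:LieAlgebraExpansion} (so that $dF(\mathbf 0)$ is onto over $\bbq$, hence has open image over every $\bbz_p$ with a single integral lower bound $N(dF(\mathbf 0))\ge|a_0|_p$) and the BCH rescaling, the rest of your outline — applying \cite[Lemma 45']{SG:SAII} and exponentiating back — matches the paper's proof, and the separate treatment of finitely many small primes becomes unnecessary since the bound works for all $p\nmid q_0$ at once.
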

\begin{lem}\label{l:LieAlgebraExpansion}
	In the setting of (A1)-(A7), there are $\dim \bbg_1$-many elements $\gamma_i'$ in $\Gamma_1$ such that 
	\[
	\Lie(\bbg_1)(\bbq)=\sum_{i=1}^{\dim \bbg_1} \Ad(\gamma_i')(\Lie (\bbg_2)(\bbq)).
	\]
And so $\sum_{i=1}^{\dim \bbg_1} \Ad(\gamma_i')(\Lie (\gcal_2)(\prod_{q_0\nmid p}\bbz_p))$ is an open subgroup of  $\Lie(\gcal_1)(\prod_{q_0\nmid p}\bbz_p)$.
\end{lem}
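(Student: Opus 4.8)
The plan is to deduce Lemma~\ref{l:LieAlgebraExpansion} from Proposition~\ref{p:GeneratingLieAlgebra} together with the flat-model description of $\Lie(\gcal_i)$ from Lemma~\ref{l:FlatModel}. First I would verify that the hypotheses of Proposition~\ref{p:GeneratingLieAlgebra} are met under (A1)--(A7): the generic geometric fibers $\bbg_i=\bbg_i^{(0)}$ are perfect (in particular $\bbg_2$ is a perfect $\bbq$-subgroup of $\bbg_1$) by (A5), and the hypothesis that $f^{(p)}_{(\gamma_i)}$ is surjective for large $p$ (A7), combined with Claim~3 of the proof of Proposition~\ref{p:BGZariskiTopology}, forces the normal closure of $\bbg_2$ in $\bbg_1$ to be all of $\bbg_1$ (otherwise the image of the multiplication-conjugation morphism would land in a proper subgroup). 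Actually it is cleaner simply to recall that this normal-closure condition is part of the running setup inherited from Theorem~\ref{t:main} via Step~0, so I will take it as given. Thus Proposition~\ref{p:GeneratingLieAlgebra} applies and $\gfr_2:=\Lie(\bbg_2)(\bbq)$ generates $\gfr_1:=\Lie(\bbg_1)(\bbq)$ as a $\bbg_1(\bbq)$-module under the adjoint representation.

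Next I would pass from the abstract generation statement to a finite list of elements of $\Gamma_1$. Since $\Gamma_1$ is Zariski-dense in $\bbg_1$ and the adjoint representation is a morphism of $\bbq$-varieties, the $\bbq$-span of $\{\Ad(\gamma)(\gfr_2)\mid \gamma\in\Gamma_1\}$ equals the $\bbq$-span of $\{\Ad(g)(\gfr_2)\mid g\in\bbg_1(\bbq)\}$, which is $\gfr_1$ by the previous paragraph. (Concretely: the subspace $W_\gamma:=\Ad(\gamma)(\gfr_2)$ depends algebraically on $\gamma$, and the $\Gamma_1$-span is $\Ad(\bbg_1(\bbq))$-stable because $\Gamma_1$ is Zariski-dense; being a nonzero $\Ad(\bbg_1(\bbq))$-submodule containing $\gfr_2$, and hence the whole module it generates, it must be $\gfr_1$.) Now pick a $\bbq$-basis of $\gfr_1$ from within $\bigcup_{\gamma\in\Gamma_1}\Ad(\gamma)(\gfr_2)$; this uses at most $\dim\bbg_1$ elements $\gamma_1',\ldots,\gamma_{\dim\bbg_1}'\in\Gamma_1$, and by construction
\[
\Lie(\bbg_1)(\bbq)=\sum_{i=1}^{\dim\bbg_1}\Ad(\gamma_i')(\Lie(\bbg_2)(\bbq)),
\]
which is the first assertion.

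For the second assertion, write $A:=\prod_{p\nmid q_0}\bbz_p$, a $\bbz[1/q_0]$-algebra which is a free $\bbz[1/q_0]$-module, so Lemma~\ref{l:FlatModel} identifies $\Lie(\gcal_i)(A)$ with $\Lie(\bbg_i)(\bbq)\otimes_{\bbq}(A\otimes_{\bbz[1/q_0]}\bbq)\cap\gl_{n_0}(A)$; in particular $\Lie(\gcal_i)(A)$ is a finitely generated free $\bbz[1/q_0]$-module that spans $\Lie(\bbg_i)(\bbq)$ over $\bbq$, i.e.\ it is a lattice in $\gfr_i$. The map $x\mapsto\sum_i\Ad(\gamma_i')(x_i)$ is $\bbz[1/q_0]$-linear (indeed defined over $\bbz[1/q_0]$ since each $\gamma_i'\in\GL_{n_0}(\bbz[1/q_0])$), and by the first part it is surjective after tensoring with $\bbq$. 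A $\bbz[1/q_0]$-linear map between finitely generated free $\bbz[1/q_0]$-modules which is surjective over $\bbq$ has image of finite index; the same then holds after the flat base change $\otimes_{\bbz[1/q_0]}A$, so $\sum_i\Ad(\gamma_i')(\Lie(\gcal_2)(A))$ is a finite-index — hence open, since $\Lie(\gcal_1)(A)$ is a profinite group in the congruence topology — subgroup of $\Lie(\gcal_1)(A)$. The main thing to be careful about is the transition from "$\bbq$-span is everything" to "finite index over $\bbz[1/q_0]$", which requires knowing that the relevant $\bbz[1/q_0]$-modules are genuinely lattices of full rank in their $\bbq$-spans; this is exactly what Lemma~\ref{l:FlatModel} delivers, so no real obstacle remains, only bookkeeping about which $A$-algebra one base-changes along.
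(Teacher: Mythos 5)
Your first paragraph is fine: Proposition~\ref{p:GeneratingLieAlgebra} plus Zariski-density of $\Gamma_1$ gives the $\bbq$-statement, and the greedy dimension-count produces $\dim\bbg_1$ elements $\gamma_i'\in\Gamma_1$. (Your worry about whether (A1)--(A7) include the normal-closure hypothesis is handled by noting that (A7) together with constructibility forces $f^{(0)}_{(\gamma_i)}$ to be dominant, so the group generated by $\Gamma_1$-conjugates of $\bbg_2$ is already $\bbg_1$.) This is exactly the paper's proof of the first assertion.

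The second paragraph contains two genuine false statements, and the way you apply Lemma~\ref{l:FlatModel} is not legitimate as written. First, $A:=\prod_{p\nmid q_0}\bbz_p$ is \emph{not} a free $\bbz[1/q_0]$-module: each $\bbz_p$ is a direct summand of $A$, and $\bbz_p$ is not even free over $\bbz[1/q_0]$ (for $\ell\nmid pq_0$ one has $\bbz_p/\ell\bbz_p=0$, which is impossible for a nonzero free $\bbz[1/q_0]$-module), so by Kaplansky's theorem (projective $=$ free over a PID) neither is $A$. Hence Lemma~\ref{l:FlatModel} does not apply with $R=A$ in the form stated. Second, $\Lie(\gcal_i)(A)=\prod_{p\nmid q_0}\Lie(\gcal_i)(\bbz_p)$ is an uncountable $A$-module; it is emphatically \emph{not} a finitely generated $\bbz[1/q_0]$-module and \emph{not} a lattice in $\gfr_i$ -- you have conflated it with $\Lie(\gcal_i)(\bbz[1/q_0])$. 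The remedy is easy and close to what you meant: work place by place. For each $p\nmid q_0$ apply Lemma~\ref{l:FlatModel} with $A=R=\bbz_p$ to identify $\Lie(\gcal_i)(\bbz_p)$ with the rank-$d_i$ $\bbz_p$-lattice $\Lie(\bbg_i)(\bbq_p)\cap\gl_{n_0}(\bbz_p)$; the $\bbz[1/q_0]$-linear map $\Phi:\Lie(\gcal_2)(\bbz[1/q_0])^{d_1}\to\Lie(\gcal_1)(\bbz[1/q_0])$ has torsion cokernel killed by some $a_0\in\bbz\setminus\{0\}$, so $\Phi_p$ is surjective onto $\Lie(\gcal_1)(\bbz_p)$ for $p\nmid a_0$ and has open (finite-index) image for the finitely many $p\mid a_0$; taking the product over $p$ gives the open subgroup. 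This is precisely the ``Chinese remainder theorem plus first part'' invoked in the paper, so the strategy is right -- but the two assertions above need to be removed and replaced by the prime-by-prime version before the argument is correct.
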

\begin{proof}
	The first part is a corollary of Proposition~\ref{p:GeneratingLieAlgebra} and the assumption that $\Gamma_1$ is Zariski-dense in $\bbg_1$. The second part is consequence of Chinese remainder theorem and the first part.
\end{proof}

\begin{lem}\label{l:StrongApproxLocalChart}
 	In the setting of (A1), (A2), and (A5), for any $p\nmid q_0$, let $\Gamma_{1,p}$ be the closure of $\Gamma_1$ in $\GL_{n_0}(\bbz_p)$, and $\wh{\Gamma}_1$ be the closure of $\Gamma_1$ in $\prod_{p\nmid q_0}\GL_{n_0}(\bbz_p)$. Then the following holds:
 	\begin{enumerate}
 		\item (Strong Approximation: simply-connected) $\wh{\Gamma}_1$ is an open subgroup of $\prod_{p\nmid q_0}\gcal_1(\bbz_p)$; in particular, there is a positive integer $c_0$ such that for any prime $p\nmid q_0$ and any integer $c\ge c_0$, we have $\Gamma_{1,p}[p^{c}]=\gcal_1(\bbz_p)[p^{c}]$ where as before these are principal congruence subgroups. 
 		\item (Strong Approximation: general) There is a positive integer $c_1$ such that for any integer $c\ge c_1$ we have
 		$\gcal_2(\bbz_p)[p^c]=\Gamma_{2,p}[p^{c}]$. Moreover, if $p$ is large enough, then $\gcal_2(\bbz_p)[p]=\Gamma_{2,p}[p]$
 		\item (Local charts) There is a positive integer $c_2$ such that for any integer $c\ge c_2$, the exponential and the logarithmic maps induce bijections between $\gcal_i(\bbz_p)[p^{c}]$ and $\Lie(\gcal_i)(\bbz_p)\cap p^c\gl_{n_0}(\bbz_p)$ for $i=1$ or $2$. 
 	\end{enumerate}
 \end{lem}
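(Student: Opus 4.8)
The plan is to prove the three parts of Lemma~\ref{l:StrongApproxLocalChart} more or less independently, with (3) being essentially formal and (1)--(2) invoking Nori--Matthews--Vaserstein--Weisfeiler type strong approximation together with the flat-model bookkeeping of Lemma~\ref{l:FlatModel}.

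\emph{Part (3).} This is the easiest: by Lemma~\ref{l:FlatModel}, for $R=\bbz_p$ we may identify $\gcal_i(\bbz_p)$ with $\bbg_i(\bbq_p)\cap\GL_{n_0}(\bbz_p)$ and likewise $\Lie(\gcal_i)(\bbz_p)$ with $\Lie(\bbg_i)(\bbq_p)\cap\gl_{n_0}(\bbz_p)$. Then Proposition~\ref{p:AnalyticIsAlgebraic} (applied over $\bbq_p$, which is legitimate since $\bbg_i$ is Zariski-connected by (A5)) says that $\log$ and $\exp$ are mutually inverse bijections between $\gcal_i(\bbz_p)[p^c]$ and $\Lie(\gcal_i)(\bbz_p)\cap p^c\gl_{n_0}(\bbz_p)$ for every $c\ge c_0$, where $c_0=1$ for odd $p$ and $c_0=2$ for $p=2$. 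So one takes $c_2=2$, uniformly in $p$; the point is that $c_2$ does not depend on $p$, which is immediate from the uniform statement in Proposition~\ref{p:AnalyticIsAlgebraic}.

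\emph{Parts (1) and (2).} For part (1), since $\bbg_1$ is perfect and simply-connected by (A5), the strong approximation theorem (Nori for the unipotent and $\SL$-type behavior, Matthews--Vaserstein--Weisfeiler and Nori for the semisimple part, together with the standard reduction for a perfect group $\bbh\ltimes\bbu$) gives that the closure $\wh\Gamma_1$ of the Zariski-dense finitely generated group $\Gamma_1$ in $\prod_{p\nmid q_0}\GL_{n_0}(\bbz_p)$ is open in $\prod_{p\nmid q_0}\gcal_1(\bbz_p)$; here one uses again Lemma~\ref{l:FlatModel} to know $\gcal_1(\bbz_p)=\bbg_1(\bbq_p)\cap\GL_{n_0}(\bbz_p)$ is the ``correct'' integral model. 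Openness in the restricted product means exactly that there is a finite set $S$ of primes and an integer $c_0$ so that outside $S$ the closure is everything and inside $S$ it contains the principal congruence subgroup at level $p^{c_0}$; enlarging $c_0$ absorbs the finitely many bad primes, giving $\Gamma_{1,p}[p^c]=\gcal_1(\bbz_p)[p^c]$ for all $c\ge c_0$ and all $p\nmid q_0$. Part (2) is the same argument applied to $\Gamma_2\subseteq\gcal_2$: $\bbg_2$ is perfect by (A5), so strong approximation again yields $\gcal_2(\bbz_p)[p^c]=\Gamma_{2,p}[p^c]$ for $c\ge c_1$ uniformly; the ``moreover'' clause, $\gcal_2(\bbz_p)[p]=\Gamma_{2,p}[p]$ for large $p$, is the sharper statement that for all but finitely many $p$ the reduction is already surjective onto the first congruence layer, which is exactly Nori's strong approximation at level $p$ for $p$ large (no ramification obstructions) combined with the structure of perfect groups; one reduces as usual to the semisimple and unipotent pieces, where the unipotent piece is trivial and the semisimple piece is Nori/Matthews--Vaserstein--Weisfeiler.

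The main obstacle is the bookkeeping needed to make the level $c_0, c_1, c_2$ \emph{uniform in $p$} and to handle the non-semisimple (perfect, not semisimple) case cleanly: strong approximation for $\bbh\ltimes\bbu$ requires knowing the closure of $\Gamma$ surjects onto the closure of its image in $\bbh$ and that the unipotent part is handled (Nori's results on unipotent groups and the fact that a finitely generated Zariski-dense subgroup of a unipotent $\bbq$-group is $p$-adically dense up to a bounded level, uniformly in $p$). All of this is standard and has been recorded in the references cited in the surrounding text (\cite{Nor}, \cite{SGV}, \cite{SG:SAII}); the proof will essentially cite those results and note that Lemma~\ref{l:FlatModel} makes the integral models compatible, then extract the uniform congruence-level statements by the restricted-product openness argument above.
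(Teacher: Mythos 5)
Parts~(1) and~(3) of your proposal are correct and match the paper: part~(1) is a direct citation of Nori's strong approximation (which applies because $\bbg_1$ is simply-connected by (A5)), and part~(3) follows from Proposition~\ref{p:AnalyticIsAlgebraic} combined with the model identification in Lemma~\ref{l:FlatModel}, uniformly in $p$.

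There is a genuine gap in your treatment of part~(2). You write that it is ``the same argument applied to $\Gamma_2\subseteq\gcal_2$'' because $\bbg_2$ is perfect. But (A5) only asserts that $\bbg_1$ is simply-connected; $\bbg_2$ is merely perfect, and Nori-type strong approximation does \emph{not} apply directly to a non-simply-connected group: the closure of a Zariski-dense finitely generated subgroup need not be open in $\prod_p\gcal_2(\bbz_p)$, and the reduction mod $p$ need not be surjective even for large $p$. The paper handles this by passing to the simply-connected cover $\iota:\wt{\bbg}_2\to\bbg_2$: it pulls $\Gamma_2$ back to $\Lambda=\iota^{-1}(\Gamma_2)\cap\wt{\bbg}_2(\bbq)$, which is again finitely generated and Zariski-dense (citing \cite[Lemma 24]{SGS}), applies Nori to $\Lambda$ inside the simply-connected $\wt{\bbg}_2$, and then transfers the conclusion down to $\bbg_2$ by bounding the index $|\bbg_2(\bbq_p)/\iota(\wt{\bbg}_2(\bbq_p))|\le p^{c_3}$, with $c_3$ depending only on $\dim\bbg_2$. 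That last bound is what produces the uniform $c_1$; it is not a consequence of applying strong approximation to $\Gamma_2$ itself.

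Your sketch of the ``moreover'' clause has the same defect and additionally misidentifies the mechanism. It is not simply ``Nori at level $p$ for $p$ large.'' Even for large $p$, the quotient $\bbg_2(\bbq_p)/\iota(\wt{\bbg}_2(\bbq_p))$ can be nontrivial; the paper argues via the Galois cohomology sequence $1\to\mu(\bbq_p)\to\wt{\bbg}_2(\bbq_p)\to\bbg_2(\bbq_p)\to H^1(\bbq_p,\mu)$ that this quotient is an abelian $m$-torsion group for $m$ bounded in terms of $\dim\bbg_2$, hence has no $p$-torsion once $p>m$. Combined with the fact that $\gcal_2(\bbz_p)[p]$ is pro-$p$, this forces $\gcal_2(\bbz_p)[p]\subseteq\iota(\iota^{-1}(\gcal_2(\bbz_p)))[p]$, and only then does Nori applied to $\Lambda$ in the cover give the equality $\gcal_2(\bbz_p)[p]=\Gamma_{2,p}[p]$. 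Without the cohomological step, the argument does not close.
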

\begin{proof}
	The first part is Nori's strong approximation~\cite[Theorem 5.4]{Nor}. The third part is a direct consequence of Proposition~\ref{p:AnalyticIsAlgebraic} and Lemma~\ref{l:FlatModel}.
	
	Let $\wt{\bbg}_2$ be the simply-connected cover of $\bbg_2$, and $\iota:\wt{\bbg}_2\rightarrow \bbg_2$ be the $\bbq$-central isogeny. Let $\Lambda:=\iota^{-1}(\Gamma_2)\cap \wt{\bbg}_2(\bbq)$. Then, as in \cite[Lemma 24]{SGS}, we have that $\Lambda$ is a finitely generated, Zariski-dense subgroup of $\wt{\bbg}_2$. Hence, by Nori's strong approximation, we have that the closure $\Lambda_p$ of $\Lambda$ in $\iota^{-1}(\gcal_2(\bbz_p))$ is open; and moreover, for large enough $p$, $\Lambda_p=\iota^{-1}(\gcal_2(\bbz_p))$. Since $|\bbg_2(\bbq_p)/\iota(\wt{\bbg}_2(\bbq_p))|\le p^{c_3}$ where $c_3$ just depends on the dimension of $\bbg_2$, we have that $\iota(\Lambda_p)\supseteq \gcal_2(\bbz_p)[p^{c_1}]$ where $c_1$ just depends on $\Gamma_2\subseteq \GL_{n_0}(\bbz[1/q_0])$. Therefore $\Gamma_{2,p}\supseteq \gcal_2(\bbz_p)[p^{c_1}].$ 
	
	To show the last claim of part (2), we notice that $\mu:=\ker(\iota)$ is a central subgroup of $\wt{\bbg}_2$ which is a $\bbq_p$-group. Using Galois cohomology, we get the following exact sequence:
	\[
	1\rightarrow \mu(\bbq_p)\rightarrow \wt{\bbg}_2(\bbq_p)\rightarrow \bbg(\bbq_p) \rightarrow H^1(\bbq_p,\mu).
	\] 
	Hence $\bbg(\bbq_p)/\iota(\wt{\bbg}_2(\bbq_p))$ can be embedded into $H^1(\bbq_p,\mu)$, which is an abelian $m$-torsion group for some $m$ that can be bounded by the dimension of $\bbg_2$. In particular, for large enough $p$, $\bbg(\bbq_p)/\iota(\wt{\bbg}_2(\bbq_p))$ does not have any $p$-element. As $\gcal_2(\bbz_p)[p]/\iota(\iota^{-1}(\gcal_2(\bbz_p)))[p]$ can be embedded into $\bbg(\bbq_p)/\iota(\wt{\bbg}_2(\bbq_p))$, we get that $\gcal_2(\bbz_p)[p]/\iota(\iota^{-1}(\gcal_2(\bbz_p)))[p]$ has no $p$-element. On the other hand, $\gcal_2(\bbz_p)[p]$ is a pro-$p$ group, and so all of its finite quotients are $p$-groups. Therefore we get 
	\be\label{e:LargeIotaImage}
	\gcal_2(\bbz_p)[p]=\iota(\iota^{-1}(\gcal_2(\bbz_p)))[p].
	\ee
	As we said earlier, $\Lambda_p=\iota^{-1}(\bbg_2(\bbz_p))$, for large enough $p$. Thus by (\ref{e:LargeIotaImage}) we have
	\[
	\gcal_2(\bbz_p)[p]=\Gamma_{2,p}[p].
	\]
\end{proof}
\begin{proof}[Proof of Proposition~\ref{p:LargeCongruence}] Let $m_1:=m_0+\dim \bbg_1$ and $\gamma_{m_0+i}:=\gamma_i'$ where $\gamma_i'$ are the elements given by Lemma~\ref{l:LieAlgebraExpansion}. Let $c$ be an integer larger than $\max\{c_0,c_1,c_2\}$ where $c_i$'s are given in Lemma~\ref{l:StrongApproxLocalChart}. Let $\gfr_{i,p}:=\Lie(\gcal_i)(\bbz_p)$. Let $F:p^c \gfr_{2,p} \times \cdots \times p^c \gfr_{2,p} \rightarrow p^c \gfr_{1,p}$ be the composite of the following $p$-adic analytic functions: 
\[
\begin{array}{lll}
p^c \gfr_{2,p} \times \cdots \times p^c \gfr_{2,p}& 
\xrightarrow{\exp} \gcal_2(\bbz_p)[p^c]\times \cdots \times \gcal_2(\bbz_p)[p^c],
& \exp(x_1,\ldots, x_{m_1}):=(\exp x_1,\ldots, \exp x_{m_1}),\\
& \xrightarrow{{\rm Conj}}\gcal_1(\bbz_p)[p^c]\times \cdots \times \gcal_1(\bbz_p)[p^c], 
& {\rm Conj}(h_1,\ldots,h_{m_1}):=(\gamma_1h_1\gamma_1^{-1},\ldots, \gamma_{m_1}h_{m_1}\gamma_{m_1}^{-1}),\\
& \xrightarrow{{\rm Prod}} \gcal_1(\bbz_p)[p^c], 
&{\rm Prod}(g_1,\ldots,g_{m_1}):=g_1\cdot \cdots \cdot g_{m_1},\\
& \xrightarrow{\log} \gfr_{1,p}\cap p^c\gl_{n_0}(\bbz_p), 
& g\mapsto \log(g).
\end{array}
\]
So by the chain rule we have $dF({\bf 0}):\gfr_{2,p}\times \cdots \times \gfr_{2,p}\rightarrow \gfr_{1,p}$ is the composite of the following maps:
\[
\begin{array}{lll}
\gfr_{2,p} \times \cdots \times \gfr_{2,p}& 
\xrightarrow{d(\exp)({\bf 0})} \gfr_{2,p}\times \cdots \times \gfr_{2,p},
& d(\exp)({\bf 0})(x_1,\ldots, x_{m_1}):=(x_1,\ldots, x_{m_1}),\\
& \xrightarrow{d({\rm Conj})(I)}\gfr_{1,p}\times \cdots \times \gfr_{1,p}, 
& d({\rm Conj})(I)(x_1,\ldots,x_{m_1})=(\Ad(\gamma_1)(x_1),\ldots, \Ad(\gamma_{m_1})(x_{m_1})),\\
& \xrightarrow{d({\rm Prod})(I)} \gfr_1, 
&d({\rm Prod})(I)(y_1,\ldots,y_{m_1})=y_1+\cdots+y_{m_1},\\
& \xrightarrow{d(\log)(I)} \gfr_{1,p}, 
& d(\log)(I)(y)=y.
\end{array}
\] 
Hence by Lemma~\ref{l:LieAlgebraExpansion} and Lemma~\ref{l:StrongApproxLocalChart}, we have that, for large enough $p$, $dF({\bf 0})$ is onto, and for any $p\nmid q_0$ the image of $dF({\bf 0})$ is open in $\gfr_{1,p}$. Hence for some non-zero integer $a_0$ we have $N(dF({\bf 0}))\ge |a_0|_p$ where, for a $d$-by-$m$ matrix $X=[\vbf_1 \cdots \vbf_m]$, $N(X):=\max_{1\le i_1\le \cdots\le i_d\le m}|\det[\vbf_{i_1}\cdots \vbf_{i_d}]|$. 

By defintion of $F$, we have $F(x_1,\ldots, x_{m_1})=\log(\exp(\Ad(\gamma_1)(x_1))\cdots\exp(\Ad(\gamma_{m_1})(x_{m_1}))$. Hence by Baker-Campbell-Hausdorff formula~\cite[Chapter V.5]{Jac} we have 
\[
F(x_1,\ldots,x_{m_1})=\sum_i \Ad(\gamma_i)(x_i)+\sum_{\ibf} c_{\ibf} L_{\ibf}(\Ad(\gamma_1)(x_1),\ldots,\Ad(\gamma_{m_1})(x_{m_1})),
\]
where $L_{\ibf}(y_1,\ldots,y_{m_1})$ is a Lie monomial with multi-index $\ibf$ and $c_{\ibf}\in \bbq$; moreover using the explicit Baker-Campbell-Hausdorff formula we see that 
\be \label{e:NormOfCoefficients}
|c_{\ibf}|_p\le p^{m_2\|\ibf\|_1},
\ee
where $\|(i_1,\ldots,i_{m_1})\|_1=\sum_j i_j$ and $m_2$ is a constant which depends on $m_1$ (independent of $p$). Let $F_1:\gfr_2\times \cdots \times \gfr_2 \rightarrow \gfr_1$ be $F_1(\xbf):=F(p^{cm_2}\xbf)$. Choosing $\bbz_p$-basis for $\gfr_1$ and $\gfr_2$, we identify them by $\bbz_p^{d_1}$ and $\bbz_p^{d_2}$, where $d_i=\dim \bbg_i$. Now using the mentioned Baker-Campbell-Hausdorff formula, writing the Taylor expansion of $F'$ at ${\bf 0}$ with respect to the chosen coordinates we get $F_1(\xbf)=\sum_{\ibf}(c_{\ibf,1}\xbf^{\ibf},\ldots,c_{\ibf,d_1}\xbf^{\ibf})$ and $|c_{\ibf,j}|_p\le 1$ for any $\ibf$ and $j$. We also have $N(dF_1({\bf 0}))\ge |a_0 p^{m_2d_1}|$. Therefore by \cite[Lemma 45']{SG:SAII} there is $l_0$ which is independent of $p$ such that 
\[
F_1({\bf 0})+p^{l}\bbz_p^{d_1}\subseteq F_1(\bbz_p^{d_2m_1}),
\]
which implies that 
\be\label{e:LargeBall} 
p^l\gfr_{1,p}\subseteq \log(\gamma_1\gcal_2(\bbz_p)[p^c]\gamma_1^{-1}\cdots \gamma_{m_1}\gcal_2(\bbz_p)[p^c]\gamma_{m_1}^{-1}).
\ee
By Lemma~\ref{l:StrongApproxLocalChart} and Equation (\ref{e:LargeBall}) we have 
\[
\Gamma_{1,p}[p^l]\subseteq \gamma_1\gcal_2(\bbz_p)[p^c]\gamma_1^{-1}\cdots \gamma_{m_1}\gcal_2(\bbz_p)[p^c]\gamma_{m_1}^{-1}
\subseteq \gamma_1\Gamma_{2,p}\gamma_1^{-1}\cdots \gamma_{m_1}\Gamma_{2,p}\gamma_{m_1}^{-1}.
\]
\end{proof}

\section{Bounded generation: Large primes.}
In this section, we will prove a bounded generation statement in $p$-adic setting for large $p$ (see Proposition~\ref{p:BoundedGenerationLargePrime}).

We will be using all the mentioned terms from algebraic geometry and algebraic group theory; in particular, we will operate under the assumptions (A1)-(A7) which are mentioned in Section~\ref{s:LargeConrguence}. In addition, we assume $\gamma_i$'s satisfy Proposition~\ref{p:LargeCongruence} and Lemma~\ref{l:LieAlgebraExpansion}, which means
\begin{itemize}
	\item[(A8)] We have $\Lie(\bbg_1)(\bbq)=\sum_{i=1}^{c} \Ad(\gamma_i)(\Lie (\bbg_2)(\bbq)).$
	\item[(A9)] For a positive integer $N$ and any prime $p\nmid q_0$, we have 
	$
	\Gamma_{1,p}[p^N]\subseteq \gamma_1 \Gamma_{2,p} \gamma_1^{-1} \cdots \gamma_{c} \Gamma_{2,p} \gamma_{c}^{-1}.
	$
\end{itemize}
It is worth mentioning that, as we have seen in the proof of Proposition~\ref{p:LargeCongruence}, (A1)-(A8) implies (A9). 

\begin{prop}\label{p:BoundedGenerationLargePrime}
In the setting of (A1)-(A9), there is a positive integer $C$ such that for any large enough $p$ we have 
\be\label{e:PAdicBG}
\textstyle
\prod_C (\gamma_1 \Gamma_{2,p} \gamma_1^{-1}\cdots \gamma_c \Gamma_{2,p}\gamma_c^{-1})=\Gamma_{1,p},
\ee
where $\Gamma_{i,p}$ is the closure of $\Gamma_i$ in $\GL_{n_0}(\bbz_p)$ and $\prod_C X=\{x_1\cdots x_C|\h x_i\in X\}$.	 
\end{prop}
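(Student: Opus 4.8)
The plan is to combine the surjectivity of the morphism $f^{(p)}_{(\gamma_i)}$ on geometric fibers (assumption (A7)/Proposition~\ref{p:BGZariskiTopology}) with a quantitative count to show that the image of the conjugate product set inside $\pi_p(\Gamma_{1,p})$ is a positive proportion of the whole group, and then upgrade ``positive proportion'' to ``everything'' via a product-set expansion argument. Concretely, let $X_p := \gamma_1\Gamma_{2,p}\gamma_1^{-1}\cdots\gamma_c\Gamma_{2,p}\gamma_c^{-1}$. First I would pass to the quotient mod $p$: by Lemma~\ref{l:StrongApproxLocalChart}(2), for large $p$ we have $\gcal_2(\bbz_p)[p] = \Gamma_{2,p}[p]$, so $\pi_p(\Gamma_{2,p}) = \pi_p(\gcal_2(\bbz_p)) = \gcal_2^{(p)}(\f_p)$, and similarly for $\gcal_1$. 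Thus $\pi_p(X_p)$ is exactly the image $f_{p,(\gamma_i)}(\gcal_2^{(p)}(\f_p)\times\cdots\times\gcal_2^{(p)}(\f_p))$ inside $\gcal_1^{(p)}(\f_p) = \pi_p(\Gamma_{1,p})$.

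Next I would estimate $|\pi_p(X_p)|$ from below. Since $f^{(p)}_{(\gamma_i)}$ is a surjective (hence dominant) morphism of geometrically irreducible varieties over $\f_p$ of the fixed dimensions $\dim\bbg_2$ and $\dim\bbg_1$, the Lang--Weil estimates together with a bound on the generic fiber dimension give $|f_{p,(\gamma_i)}(\gcal_2^{(p)}(\f_p)^c)| \gg p^{\dim\bbg_1}$ with an implied constant independent of $p$; equivalently $|\pi_p(X_p)| \ge \delta_0\,|\gcal_1^{(p)}(\f_p)|$ for some fixed $\delta_0 > 0$ and all large $p$. (Here one uses that the fibers of a dominant morphism have dimension exactly $\dim\bbg_2\cdot c - \dim\bbg_1$ on a dense open set, and that the finitely many exceptional primes from Proposition~\ref{p:BGZariskiTopology} are excluded.) This is the step referenced in the outline via Pink--R\"utsche \cite[Proposition 2.5]{PR}; I would invoke that result to get the quantitative lower bound uniformly in $p$.

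Now $\pi_p(X_p)$ is a symmetric-up-to-finitely-many-shifts subset of the finite group $\gcal_1^{(p)}(\f_p)$ of density bounded below; more precisely $X_p = X_p^{-1}$ up to relabeling the $\gamma_i$, and $1\in X_p$. Since $\bbg_1$ is perfect and simply connected, by Nori's strong approximation \cite[Theorem 5.4]{Nor} the group $\gcal_1^{(p)}(\f_p)$ has no abelian (indeed no ``small'') quotient of bounded index for large $p$, so it is quasirandom in the sense of Gowers \cite{Gow}: the smallest dimension of a nontrivial representation grows with $p$. By Gowers' theorem (see also \cite[Corollary 1]{NP}), if $A,B,C$ are subsets of a group $H$ whose nontrivial representations all have dimension $\ge D$ and $|A||B||C| > |H|^3/D$, then $ABC = H$. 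Applying this with $A=B=C=\pi_p(X_p)$ and $D = D(p)\to\infty$, for $p$ large enough $|\pi_p(X_p)|^3 \ge \delta_0^3|\gcal_1^{(p)}(\f_p)|^3 > |\gcal_1^{(p)}(\f_p)|^3/D(p)$, hence $\pi_p(X_p)^3 = \gcal_1^{(p)}(\f_p)$, i.e.\ $\pi_p(\prod_3 X_p) = \pi_p(\Gamma_{1,p})$.

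Finally I would lift from mod $p$ to the whole of $\Gamma_{1,p}$ using assumption (A9). We have $\prod_3 X_p$ surjecting onto $\pi_p(\Gamma_{1,p}) = \Gamma_{1,p}/\Gamma_{1,p}[p]$, and $\Gamma_{1,p}[p^N]\subseteq X_p$ by (A9); since $\Gamma_{1,p}[p]\supseteq \Gamma_{1,p}[p^2]\supseteq\cdots$ and for $p$ large $\Gamma_{1,p}[p^{k}]/\Gamma_{1,p}[p^{k+1}]$ is a fixed-dimensional $\f_p$-vector space, one bounded number of layers can be filled by taking a bounded number of extra products of $\Gamma_{1,p}[p^N]$-cosets — or, more cleanly, I would argue that $\Gamma_{1,p}[p]$ is generated by a bounded number of conjugates of $\Gamma_{1,p}[p^N]$ using the Lie-algebra surjectivity (A8) and the Baker--Campbell--Hausdorff/commutator computation already carried out in the proof of Proposition~\ref{p:LargeCongruence}, so that $\Gamma_{1,p}[p] \subseteq \prod_{C'} X_p$ for a fixed $C'$ and all large $p$. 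Combining, $\prod_{3+C'} X_p = \Gamma_{1,p}$, giving the conclusion with $C = 3 + C'$. The main obstacle is making the lower bound $|\pi_p(X_p)| \gg p^{\dim\bbg_1}$ genuinely uniform in $p$ — this is exactly where the effective Lang--Weil input of Pink--R\"utsche \cite[Proposition 2.5]{PR} is essential, since a naive application of Chevalley's constructibility theorem only controls each $p$ separately; once that uniformity is in hand, Gowers' quasirandomness argument and the lifting step are comparatively routine.
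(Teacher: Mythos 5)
Your overall strategy (mod-$p$ surjectivity via effective point counting plus Gowers quasirandomness, then a layer-by-layer lift using the Lie algebra surjectivity and (A9)) is the same as the paper's. But there is one genuine gap in the counting step and one overclaim that you should fix.

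\textbf{Gap in the fiber-dimension control.} You invoke Pink--R\"utsche \cite[Proposition 2.5]{PR} to bound $|f_p^{-1}(y)(\f_p)|$ in terms of $\dim f_p^{-1}(y)$, but then argue that fibers have the expected dimension $cd_2-d_1$ ``on a dense open set.'' That only controls generic fibers; the counting estimate $|\pi_p(X_p)|\ge |\gcal_2^{(p)}(\f_p)|^c/\max_y|f_p^{-1}(y)(\f_p)|$ requires an upper bound on \emph{all} fiber dimensions, and a dominant morphism can have special fibers of larger dimension whose point counts swamp the estimate. The paper resolves this by replacing $f_{p,(\gamma_i)_{i=1}^c}$ with the $d_1$-fold replication $f_{p,(\gamma_i)_{i=1}^{cd_1}}$ (where $d_1=\dim\bbg_1$): by \cite[Proposition 2.4]{PR} \emph{every} fiber of the replicated map then has dimension at most $d_1(cd_2-1)$, which is the generic codimension, and only after that is Proposition 2.5 applied. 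So the density lower bound is for $\prod_{d_1}\pi_p(X_p)$ rather than $\pi_p(X_p)$, and Gowers gives $\prod_{3d_1}\pi_p(X_p)=\pi_p(\Gamma_{1,p})$, not $\prod_3$. As stated, your argument does not justify the positive-density claim for a single copy of $X_p$.

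\textbf{Overclaim on the mod-$p$ image.} From Lemma~\ref{l:StrongApproxLocalChart}(2), i.e.\ $\Gamma_{2,p}[p]=\gcal_2(\bbz_p)[p]$, it does not follow that $\pi_p(\Gamma_{2,p})=\gcal_2^{(p)}(\f_p)$ — equality of kernels does not give equality of images. The paper never uses or needs this identity; it only needs the lower bound $|\pi_p(\Gamma_{2,p})|\ge C^{-1}p^{d_2}$, which comes from Nori's strong approximation together with the orders of finite quasi-simple groups of Lie type. You should replace your identity with this lower bound.

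\textbf{Lifting step.} Your layer-filling sketch is on the right track but is better pinned down by the precise statement the paper isolates as Lemma~\ref{l:kthGradeGeneration}: for each $k\ge1$ and large $p$, $(\gamma_1\Gamma_{2,p}[p^k]\gamma_1^{-1}\cdots\gamma_c\Gamma_{2,p}[p^k]\gamma_c^{-1})\Gamma_{1,p}[p^{k+1}]=\Gamma_{1,p}[p^k]$, which follows from (A8) via the graded isomorphism of Lemma~\ref{l:ThekthGrade}. Iterating $N-1$ times from $k=1$ fills $\Gamma_{1,p}[p]/\Gamma_{1,p}[p^N]$, and (A9) handles $\Gamma_{1,p}[p^N]$, giving $C=3d_1+N$ rather than the $3+C'$ you indicate. (In contrast, the phrasing ``$\Gamma_{1,p}[p]$ is generated by a bounded number of conjugates of $\Gamma_{1,p}[p^N]$'' is not literally sensible, since any $\Gamma_{1,p}$-conjugate of $\Gamma_{1,p}[p^N]$ stays inside $\Gamma_{1,p}[p^N]$; what is really used is the conjugation by $\gamma_i$'s of the $\Gamma_{2,p}[p^k]$'s, as in the cited lemma.)
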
 
It is enough to prove that the equality holds modulo all the powers of $p$. We start with proving that (\ref{e:PAdicBG}) holds modulo $p$ for $C=3\dim \bbg_1$.  
\begin{lem}\label{l:BGModP}
 In the setting of (A1)-(A8), for large enough $p$, we have
 \[
 \textstyle
 \prod_{3\dim \bbg_1}\pi_p(\gamma_1 \Gamma_{2,p} \gamma_1^{-1}\cdots \gamma_c \Gamma_{2,p}\gamma_c^{-1})=\pi_p(\Gamma_{1,p}),
 \]	
 where $\Gamma_{i,p}$ is the closure of $\Gamma_i$ in $\GL_{n_0}(\bbz_p)$ and $\prod_C X=\{x_1\cdots x_C|\h x_i\in X\}$.
\end{lem}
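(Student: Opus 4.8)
The plan is to pass to finite fields and exploit the surjectivity of $f^{(p)}_{(\gamma_i)}$ on geometric fibers (assumption (A7)), combine it with an effective version of the Pink--Rütsche estimate to see that the image of $f^{(p)}_{(\gamma_i)}$ on $k(p)$-points is a \emph{large} subset of $\bbg_1^{(p)}(k(p))$, then use a Gowers-type product-theorem to conclude that three-fold products already cover everything, and finally identify $\bbg_1^{(p)}(k(p))$ with $\pi_p(\Gamma_{1,p})$ via Nori's strong approximation. First I would fix the $\gamma_i$ produced by Proposition~\ref{p:BGZariskiTopology} (available through (A7)) so that $f^{(p)}_{(\gamma_i)}\colon \bbg_2^{(p)}\times\cdots\times\bbg_2^{(p)}\to\bbg_1^{(p)}$ is surjective for all $p$ outside a finite set, with $\bbg_1^{(p)}$ Zariski-connected. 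By Lang's theorem (since $k(p)=\f_p$ is finite and $\bbg_1^{(p)}$ is connected) the map on $\f_p$-points $f^{(p)}_{(\gamma_i)}\colon \prod \gcal_2^{(p)}(\f_p)\to\gcal_1^{(p)}(\f_p)$ need not be surjective, but its image $S_p$ should have size $\gg |\gcal_1^{(p)}(\f_p)|$ with an implied constant independent of $p$; this is exactly where \cite[Proposition 2.5]{PR} enters — a dominant morphism between varieties defined over $\f_p$ with control on the degrees and dimensions has image over $\f_p$ of density bounded below uniformly in $p$. Here I need to know that the $\gcal_i^{(p)}$ have bounded complexity (degree, number of defining equations) as $p$ varies, which follows because they are the fibers of the fixed finite-type $\bbz[1/q_0]$-schemes $\gcal_i$.

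Next, I would want to upgrade ``$S_p$ is a positive-density subset'' to ``$S_p\cdot S_p\cdot S_p=\gcal_1^{(p)}(\f_p)$''. For this I invoke Gowers' result on quasirandom groups \cite{Gow} (in the form of \cite[Corollary 1]{NP}): if $G$ is a finite group whose smallest nontrivial complex representation has dimension $\ge |G|^{\delta}$ for suitable $\delta$, and $A,B,C\subseteq G$ have $|A||B||C|>|G|^{3-\delta'}$, then $ABC=G$. To apply this I need the finite groups $\gcal_1^{(p)}(\f_p)$ to be quasirandom with a parameter uniform in $p$; since $\bbg_1$ is perfect and simply-connected (assumption (A5)) and of fixed dimension, its semisimple part is a product of simply-connected almost-simple groups of bounded rank, and the classical bounds of Landazuri--Seitz / Liebeck on the minimal faithful representation degree of finite groups of Lie type, together with triviality of the relevant abelianizations, give the required uniform quasirandomness (the unipotent radical contributes a bounded-index factor that does not destroy this once one passes to a subgroup of bounded index, cf.\ \cite[Lemma 16]{SG:SAI}). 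Then taking $S_p$ of density bounded below, the product $S_p^3$ — and hence a three-fold product of the sets $\pi_p(\gamma_1\Gamma_{2,p}\gamma_1^{-1}\cdots\gamma_c\Gamma_{2,p}\gamma_c^{-1})$, each of which contains $\pi_p$ of the image set $S_p$ for $p$ large — equals $\gcal_1^{(p)}(\f_p)$. A small bookkeeping point: a single application of $f^{(p)}_{(\gamma_i)}$ uses $c$ conjugates, so the three-fold product uses $3c$ conjugates, but since $c$ can be taken $\le\dim\bbg_1$ by the proof of Proposition~\ref{p:BGZariskiTopology}, the count $3\dim\bbg_1$ in the statement is what one gets after re-indexing (reusing the same $\gamma_i$'s cyclically).

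Finally I would identify $\gcal_1^{(p)}(\f_p)$ with $\pi_p(\Gamma_{1,p})$: by Nori's strong approximation \cite[Theorem 5.4]{Nor} (packaged as Lemma~\ref{l:StrongApproxLocalChart}(1)), $\wh{\Gamma}_1$ is open in $\prod_{p\nmid q_0}\gcal_1(\bbz_p)$, and for $p$ large the reduction map $\Gamma_{1,p}\to\gcal_1^{(p)}(\f_p)$ is onto, so $\pi_p(\Gamma_{1,p})=\gcal_1^{(p)}(\f_p)$; similarly $\pi_p(\Gamma_{2,p})=\gcal_2^{(p)}(\f_p)$ for large $p$ by Lemma~\ref{l:StrongApproxLocalChart}(2), which is what lets $\pi_p(\gamma_i\Gamma_{2,p}\gamma_i^{-1})$ see all of the image of $f^{(p)}_{(\gamma_i)}$. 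Stitching these together yields the claimed equality for all sufficiently large $p$. The main obstacle I anticipate is the \emph{uniformity in $p$}: both the Pink--Rütsche density lower bound and the quasirandomness parameter must be independent of $p$, which forces one to control the ``complexity'' of the fibers $\gcal_i^{(p)}$ uniformly — this is morally clear from finite-type-ness over $\bbz[1/q_0]$ but needs the effective statements cited (e.g.\ \cite[Theorem 40, Lemma 42]{SGV}) — and one must also handle the unipotent-radical contribution, where the relevant ``quasirandomness'' is really a statement about the semisimple quotient plus a bounded-order correction.
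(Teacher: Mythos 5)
Your overall architecture matches the paper's: bound the density of the image of the conjugate-product map over $\f_p$ via Pink--R\"{u}tsche, invoke quasirandomness (Landazuri--Seitz plus \cite[Corollary 4]{SGV}) and Gowers' theorem to conclude a three-fold product fills the group, and identify $\pi_p(\Gamma_{i,p})$ with $\gcal_i^{(p)}(\f_p)$ via Nori. But there is a genuine gap at the Pink--R\"{u}tsche step. You assert that a dominant morphism with uniformly bounded complexity has image over $\f_p$ of density bounded below; that is false in this generality, because $\f_p$-points can concentrate on fibers whose dimension exceeds the generic one. Pink--R\"{u}tsche's \cite[Proposition 2.5]{PR} only gives the upper bound $|f_p^{-1}(y)(\f_p)|\le C_1 p^{\dim f_p^{-1}(y)}$ for each fiber; without a uniform bound on $\dim f_p^{-1}(y)$ this does not translate into a lower bound on $|f_p(\ldots)|$. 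The paper's key move, which you are missing, is to replace $f^{(p)}_{(\gamma_i)_{i=1}^{c}}$ by its $d_1$-fold iterate $f^{(p)}_{(\gamma_i)_{i=1}^{cd_1}}$ (with the $\gamma_i$'s repeated cyclically, $d_1=\dim\bbg_1$) and to invoke \cite[Proposition 2.4]{PR}: this guarantees that \emph{every} fiber of the iterated map has dimension exactly $d_1(cd_2-1)$, the generic value. Only after this equidimensionalization does the point count give $|f_p(\ldots)|\gg p^{d_1}\gg |\pi_p(\Gamma_{1,p})|$.

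This also corrects your accounting for the constant: the $3\dim\bbg_1$ is $3\cdot d_1$, i.e.\ $d_1$ for the equidimensionalizing iterate and $3$ for Gowers, applied to the fixed set $\pi_p(\gamma_1\Gamma_{2,p}\gamma_1^{-1}\cdots\gamma_c\Gamma_{2,p}\gamma_c^{-1})$; it is not ``$3c$ with $c\le\dim\bbg_1$,'' and in fact nothing in Proposition~\ref{p:BGZariskiTopology} forces $c\le\dim\bbg_1$. Finally, your remark that ``the unipotent radical contributes a bounded-index factor'' is incorrect: $\ucal_p(\f_p)$ has order a fixed power of $p$, so it is not of bounded index. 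The correct route, which the paper and you both gesture at, is \cite[Corollary 4]{SGV}: every nontrivial irreducible representation of $\pi_p(\Gamma_{1,p})=\lcal_p(\f_p)\ltimes\ucal_p(\f_p)$ restricts nontrivially to the simply-connected semisimple part $\lcal_p(\f_p)$, so Landazuri--Seitz applies directly without passing to any bounded-index subgroup.
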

\begin{proof}
	By (A7), for large enough $p$, $f_{p,(\gamma_i)_{i=1}^c}$ is dominant. By (A5) and Proposition~\ref{p:BGZariskiTopology}, for large enough $p$, $\bbg_2^{(p)}$ is irreducible. So by~\cite[Proposition 2.4]{PR} fibers of $f_{p,(\gamma_i)_{i=1}^{cd_1}}$ have dimension at most $d_1(cd_2-1)$, where $d_1=\dim \bbg_1$, $d_2=\dim \bbg_2$,  $\gamma_{rc+i}=\gamma_i$ for $0\le r<d_1$ and $1\le i\le c$. 
	
	Clearly $f_p:=f_{p,(\gamma_i)_{i=1}^{cd_1}}$ is dominant. Hence, by \cite[Proposition 2.5]{PR}, there is a positive constant $C_1$ such that for any prime $p$ and any $y\in \gcal_1^{(p)}(k(p))$ we have
	\be\label{e:UpperBoundOnFibers}
	|f_p^{-1}(y)(k(p))|\le C_1 p^{\dim(f_p^{-1}(y))}.
	\ee 
	Since any fiber of $f$ has dimension at most $d_1(cd_2-1)$, by (\ref{e:UpperBoundOnFibers}) we get
	\be\label{e:1LowerBoundForTheImage}
	|\pi_p(\Gamma_{2,p})|^{cd_1}\le \sum_{y\in f_p\big(\underbrace{\pi_p(\Gamma_{2,p})\times \cdots \times \pi_p(\Gamma_{2,p})}_\text{$cd_1$ times}\big)}	|f_p^{-1}(y)(k(p))| \le C_1  p^{d_1(cd_2-1)} |f_p(\Gamma_{2,p})|.
	\ee
	
	Since $\bbg_2$ is perfect, by Nori's strong approximation (see~\cite[Section 3]{SGV}) and the well-known order of finite quasi-simple groups of Lie type we have that there is a positive constant $C_2$ such that $|\pi_p(\Gamma_{2,p})|\ge C_2^{-1} p^{d_2}$ for any large enough prime $p$.  So together with (\ref{e:1LowerBoundForTheImage}) we get
	\be\label{e:2LowerBoundForTheImage}
	\textstyle
	C_2^{-cd_1}C_1^{-1} p^{d_1}\le |f_p\big(\underbrace{\pi_p(\Gamma_{2,p})\times \cdots \times \pi_p(\Gamma_{2,p})}_\text{$cd_1$ times}\big)|=|\prod_{d_1}\pi_p(\gamma_1 \Gamma_{2,p} \gamma_1^{-1}\cdots \gamma_c \Gamma_{2,p}\gamma_c^{-1})|. 
	\ee
	Since $\bbg_1$ is perfect and $\bbg_1$ is simply-connected, again by Nori's strong approximation (see~\cite[Section 3]{SGV}) and the well-known order of finite quasi-simple groups of Lie type we have that there is a constant $C_3$ such that $|\pi_p(\Gamma_{1,p})|\ge C_3^{-1} p^{d_2}$ for any large enough prime $p$. Hence by (\ref{e:2LowerBoundForTheImage}) we get 
	\be\label{e:3LowerBoundForTheImage}
	\textstyle
	C_4^{-1} |\pi_p(\Gamma_{1,p})|\le |\prod_{d_1}\pi_p(\gamma_1 \Gamma_{2,p} \gamma_1^{-1}\cdots \gamma_c \Gamma_{2,p}\gamma_c^{-1})|,
	\ee
	for some positive constant $C_4$. By Nori's Strong Approximation (see \cite[Theorem 5.4]{Nor} or \cite[Theorem A]{SGV}) we have that  $\pi_p(\Gamma_{1,p})=\lcal_p(k(p))\ltimes \ucal_p(k(p))$ for large enough $p$, where $\lcal_p$ is a semisimple, simply connected, Zariski connected $k(p)$-group with dimension at most $d_1$ and $\ucal_p$ is a unipotent Zariski connected $k(p)$-group  with dimension at most $d_1$. Let $\rho$ be an irreducible (complex) representation of $\pi_p(\Gamma_{1,p})$. By \cite[Corollary 4]{SGV}, the restriction of $\rho$ to $\lcal_p(k(p))$ is not trivial. Hence by~\cite{LS} 
	\be\label{e:QuasiRandom}
	\dim \rho\ge |\lcal_p(k(p))|^{1/C_5}\ge |\pi_p(\Gamma_{1,p})|^{1/C_6}
	\ee
	 for some positive constants $C_5$ and $C_6$ (depending only on $d_1$). By (\ref{e:3LowerBoundForTheImage}), (\ref{e:QuasiRandom}), and a theorem of Gowers~\cite{Gow} (see \cite[Corollary 1]{NP}), it follows that 
	 \[
	 \textstyle
	 \prod_{3d_1}\pi_p(\gamma_1 \Gamma_{2,p} \gamma_1^{-1}\cdots \gamma_c \Gamma_{2,p}\gamma_c^{-1})=\pi_p(\Gamma_{1,p})
	 \]
 for large enough $p$.
\end{proof}
Next we generate the $k$-th grade $\Gamma_{1,p}[p^k]/\Gamma_{1,p}[p^{k+1}]$ using $\gamma_i$'s. Let us recall the connection between the $k$-th grade and the Lie algebra of the group scheme $\gcal_i$.  
\begin{lem}\label{l:ThekthGrade}
	In the setting of (A1)-(A5), for large enough prime $p$ and any positive integer $k$, 
	\[
	\Psi_k:\gcal_i(\bbz_p)[p^k]/\gcal_i(\bbz_p)[p^{k+1}]\rightarrow \gfr_{i,p}/p\gfr_{i,p},\hspace{.75cm} \Psi_k((I+p^kx)\gcal_i(\bbz_p)[p^{k+1}]):=\pi_p(x),
	\]
 where $\gfr_{i,p}:=\Lie(\gcal_i)(\bbz_p)$, is an isomorphism; moreover for any $g\in \gcal_i(\bbz_p)$ and $g'\in \gcal_i(\bbz_p)[p^k]$ we have 
 \[
  \Psi_k(gg'g^{-1}\gcal_i(\bbz_p)[p^{k+1}])=\Ad(\pi_p(g))(\Psi_k(g'\gcal_i(\bbz_p)[p^{k+1}])).
 \]
\end{lem}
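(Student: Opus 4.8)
The plan is to work throughout with a fixed prime $p$ large enough that, for $i=1,2$, the fiber $\gcal_i^{(p)}$ is smooth over $\f_p$ of dimension $\dim\bbg_i$; this is possible by Theorem~\ref{t:GenericnessConstructiblityDominant}(1), since each generic fiber $\gcal_i^{(0)}$, being a group scheme over a field of characteristic zero, is smooth and — by (A5) — geometrically irreducible, and in particular such a $p$ is odd. Using the embedding $\gcal_i\subseteq(\GL_{n_0})_{\bbz_p}$ I would record $\gcal_i(\bbz_p)[p^k] = \gcal_i(\bbz_p)\cap(I+p^k\gl_{n_0}(\bbz_p))$, a subgroup of $\GL_{n_0}(\bbz_p)$ in which $\gcal_i(\bbz_p)[p^{k+1}]$ is normal, and, via Lemma~\ref{l:FlatModel}, identify $\gfr_{i,p} := \Lie(\gcal_i)(\bbz_p)$ with $\Lie(\bbg_i)(\bbq_p)\cap\gl_{n_0}(\bbz_p)$: a free $\bbz_p$-module of rank $\dim\bbg_i$ which is \emph{saturated} in $\gl_{n_0}(\bbz_p)$, so that $\gfr_{i,p}\cap p^m\gl_{n_0}(\bbz_p) = p^m\gfr_{i,p}$ for all $m$ and reduction modulo $p$ induces an isomorphism $\gfr_{i,p}/p\gfr_{i,p}\xrightarrow{\sim}\Lie(\gcal_i^{(p)})(\f_p)$ — injectivity from saturatedness, surjectivity from the dimension count (this is the one place smoothness of $\gcal_i^{(p)}$ is used).

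To see that $\Psi_k$ is well defined, I would take $g' = I+p^kx\in\gcal_i(\bbz_p)[p^k]$ with $x\in\gl_{n_0}(\bbz_p)$ and note that, $p$ being odd and $k\ge1$, $\log g'\in\gfr_{i,p}$ by Proposition~\ref{p:AnalyticIsAlgebraic} together with Lemma~\ref{l:FlatModel}; a valuation bound on the tail of the logarithmic series gives $\log g'\equiv p^kx\pmod{p^{k+1}\gl_{n_0}(\bbz_p)}$, so by saturatedness $\log g' = p^ky$ with $y\in\gfr_{i,p}$ and $y\equiv x\pmod p$, whence $\pi_p(x)\in\gfr_{i,p}/p\gfr_{i,p}$ is meaningful; independence of the coset representative is the direct check that $(I+p^kx)^{-1}(I+p^kx')\in I+p^{k+1}\gl_{n_0}(\bbz_p)$ forces $x\equiv x'\pmod p$. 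Then $\Psi_k$ is a group homomorphism into the additive group $\gfr_{i,p}/p\gfr_{i,p}$ because $(I+p^kx)(I+p^kx'') = I+p^k(x+x''+p^kxx'')$ and $k\ge1$; it is injective since $\pi_p(x) = 0$ puts $I+p^kx$ in $\gcal_i(\bbz_p)\cap(I+p^{k+1}\gl_{n_0}(\bbz_p)) = \gcal_i(\bbz_p)[p^{k+1}]$; and it is surjective because for $y\in\gfr_{i,p}$ one has $p^ky\in p^k\gfr_{i,p} = \gfr_{i,p}\cap p^k\gl_{n_0}(\bbz_p)$, hence $\exp(p^ky)\in\gcal_i(\bbz_p)[p^k]$ (again Proposition~\ref{p:AnalyticIsAlgebraic} and Lemma~\ref{l:FlatModel}) and $\exp(p^ky)\equiv I+p^ky\pmod{p^{k+1}\gl_{n_0}(\bbz_p)}$, so its class maps to $\pi_p(y)$.

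For the conjugation identity, given $g\in\gcal_i(\bbz_p)$ and $g' = I+p^kx\in\gcal_i(\bbz_p)[p^k]$ I would observe $gg'g^{-1} = I+p^k(gxg^{-1})\in\gcal_i(\bbz_p)[p^k]$ with $gxg^{-1}\in\gl_{n_0}(\bbz_p)$, so that reduction modulo $p$ (a ring homomorphism) gives $\Psi_k(gg'g^{-1}\gcal_i(\bbz_p)[p^{k+1}]) = \pi_p(g)\,\pi_p(x)\,\pi_p(g)^{-1} = \Ad(\pi_p(g))(\Psi_k(g'\gcal_i(\bbz_p)[p^{k+1}]))$, the adjoint action on $\gfr_{i,p}/p\gfr_{i,p} = \Lie(\gcal_i^{(p)})(\f_p)$ being conjugation — which is legitimate because $\Ad(g)$ preserves $\gfr_{i,p}$. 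The parts that are not routine bookkeeping with the congruence filtration of $\GL_{n_0}(\bbz_p)$ are (i) the good-reduction statement identifying $\gfr_{i,p}/p\gfr_{i,p}$ with $\Lie(\gcal_i^{(p)})(\f_p)$, which is what forces the hypothesis that $p$ be large, and (ii) the tail estimates for $\log$ and $\exp$, which use that $p$ is odd (so $c_0=1$ in Proposition~\ref{p:AnalyticIsAlgebraic}); I expect (i) to be the main point to get right.
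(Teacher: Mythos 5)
Your proof is correct and follows essentially the same line as the paper's: restrict to $p$ large enough that $\gcal_i^{(p)}$ is smooth of the right dimension, identify $\gfr_{i,p}/p\gfr_{i,p}$ with $\Lie(\gcal_i^{(p)})(\f_p)$, use $\log$/$\exp$ (Proposition~\ref{p:AnalyticIsAlgebraic} and Lemma~\ref{l:FlatModel}) for well-definedness and surjectivity, and check the conjugation identity by the direct matrix computation $gg'g^{-1}=I+p^k(gxg^{-1})$. The only difference is that the paper simply cites the discussion in \cite[Section 2.9]{SG:SAI} for the well-defined injective homomorphism $\wt{\Psi}_k$ into $\Lie(\gcal_i)(\bbz_p/p\bbz_p)$, whereas you unwind that argument explicitly via saturatedness of $\gfr_{i,p}$ in $\gl_{n_0}(\bbz_p)$ and the dimension count from smoothness; the substance is the same.
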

\begin{proof}
	By the discussion in~\cite[Section 2.9]{SG:SAI}, we get that 
	\[
		\wt{\Psi}_k:\gcal_i(\bbz_p)[p^k]/\gcal_i(\bbz_p)[p^{k+1}]\rightarrow \Lie(\gcal_i)(\bbz_p/p\bbz_p),\h\h\h \wt{\Psi}_k((I+p^kx)\gcal_i(\bbz_p)[p^{k+1}]):=\pi_p(x),
	\]
 is a well-defined injective group homomorphism. 
 
 	For large $p$, $\gcal_i\times_{\bbz[1/q_0]}\bbz_p$ is a smooth $\bbz_p$-group scheme, and so $\Lie(\gcal_i)(\bbz_p/p\bbz_p)$ is naturally isomorphic to $\gfr_i/p\gfr_i$. So we get that $\Psi_k$ is a well-defined injective group homomorphism. 
 	
 	For any $x\in \gfr_{i,p}$ and $p>2$, by Proposition~\ref{p:AnalyticIsAlgebraic}, $\exp(p^kx)\in \gcal_i(\bbz_p)[p^k]$. By the definition of the exponential function we can see that 
 	\[
 	\Psi_k(\exp(p^kx)\gcal_i(\bbz_p)[p^{k+1}])=\pi_p(x),
 	\] 
 which implies that $\Psi_k$ is a group isomorphism. The other part of Lemma is easy to check. 
\end{proof}
\begin{lem}\label{l:kthGradeGeneration}
	In the setting of (A1)-(A8), for large enough $p$ and any positive integer $k$, we have 
	\[
	(\gamma_1\Gamma_{2,p}[p^k]\gamma_1^{-1}\cdots\gamma_c\Gamma_{2,p}[p^k]\gamma_c^{-1})\Gamma_{1,p}[p^{k+1}]=\Gamma_{1,p}[p^k].
	\]
\end{lem}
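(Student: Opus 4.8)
The plan is to transport the asserted group identity into a statement about Lie algebras modulo $p$ via the isomorphisms $\Psi_k$ of Lemma~\ref{l:ThekthGrade}, and then to read that statement off from the rational identity (A8) by a cokernel argument over $\bbz[1/q_0]$. First I would choose $p$ large enough that, simultaneously: each $\gamma_i$ and $\gamma_i^{-1}$ lie in $\GL_{n_0}(\bbz_p)$ (automatic once $p\nmid q_0$), so conjugation by $\gamma_i$ is a $\bbz_p$-automorphism of $\gcal_1$ preserving $I+p^k\gl_{n_0}(\bbz_p)$; $\Gamma_{1,p}=\gcal_1(\bbz_p)$, hence $\Gamma_{1,p}[p^k]=\gcal_1(\bbz_p)[p^k]$ for all $k$, by strong approximation in the simply-connected case (Lemma~\ref{l:StrongApproxLocalChart}(1) together with (A5), using that an open subgroup of the product has all but finitely many local components equal to the whole factor); $\Gamma_{2,p}[p]=\gcal_2(\bbz_p)[p]$ (Lemma~\ref{l:StrongApproxLocalChart}(2)), which, since $\gcal_2(\bbz_p)[p^k]\subseteq\gcal_2(\bbz_p)[p]=\Gamma_{2,p}[p]\subseteq\Gamma_{2,p}$, forces $\Gamma_{2,p}[p^k]=\gcal_2(\bbz_p)[p^k]$ for every $k\ge 1$; and $\Psi_k\colon\gcal_i(\bbz_p)[p^k]/\gcal_i(\bbz_p)[p^{k+1}]\xrightarrow{\sim}\gfr_{i,p}/p\gfr_{i,p}$ is an isomorphism for $i=1,2$ and every $k$ (Lemma~\ref{l:ThekthGrade}), where $\gfr_{i,p}=\Lie(\gcal_i)(\bbz_p)$ is a saturated $\bbz_p$-sublattice of $\gl_{n_0}(\bbz_p)$ by Lemma~\ref{l:FlatModel} and the exponential identifies $p^k\gfr_{2,p}$ with $\gcal_2(\bbz_p)[p^k]$ (Proposition~\ref{p:AnalyticIsAlgebraic}).

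Next I would reduce to a Lie-algebra statement. Since $\gamma_i\Gamma_{2,p}[p^k]\gamma_i^{-1}\subseteq\Gamma_{1,p}[p^k]$ by the first point above, the set on the left-hand side of the claim is a union of right cosets of $\Gamma_{1,p}[p^{k+1}]$ contained in $\Gamma_{1,p}[p^k]$ and containing $\Gamma_{1,p}[p^{k+1}]$; as the quotient $A_k:=\Gamma_{1,p}[p^k]/\Gamma_{1,p}[p^{k+1}]\cong\gfr_{1,p}/p\gfr_{1,p}$ is abelian, the claim is equivalent to $\sum_{i=1}^{c}\overline{\gamma_i\Gamma_{2,p}[p^k]\gamma_i^{-1}}=A_k$, where bars denote images in $A_k$. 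Writing a general element of $\gcal_2(\bbz_p)[p^k]$ as $\exp(p^kx)$ with $x\in\gfr_{2,p}$, one has $\gamma_i\exp(p^kx)\gamma_i^{-1}=\exp(p^k\Ad(\gamma_i)x)$, and combining $\Psi_k(\exp(p^ky)\,\cdot\,)=\pi_p(y)$ with the $\Ad$-equivariance of $\Psi_k$ from Lemma~\ref{l:ThekthGrade} shows that the image of this element in $A_k\cong\gfr_{1,p}/p\gfr_{1,p}$ is $\pi_p(\Ad(\gamma_i)x)$. Hence the $i$-th summand is the image of $\Ad(\gamma_i)\gfr_{2,p}$ in $\gfr_{1,p}/p\gfr_{1,p}$, and the whole claim collapses to the single identity
\[
\textstyle\sum_{i=1}^{c}\Ad(\gamma_i)\,\gfr_{2,p}+p\,\gfr_{1,p}=\gfr_{1,p}.
\]

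Finally I would establish this identity for all large $p$ uniformly. Consider the $\bbz[1/q_0]$-linear map $\Phi\colon\Lie(\gcal_2)(\bbz[1/q_0])^{\oplus c}\to\Lie(\gcal_1)(\bbz[1/q_0])$, $(x_i)\mapsto\sum_i\Ad(\gamma_i)x_i$, which makes sense because each $\gamma_i\in\gcal_1(\bbz[1/q_0])$ acts on $\Lie(\gcal_1)(\bbz[1/q_0])$. By (A8) its cokernel vanishes after $\otimes_{\bbz[1/q_0]}\bbq$, so it is a finitely generated torsion module over the PID $\bbz[1/q_0]$ and is killed by some nonzero integer $q_2$ coprime to $q_0$; thus $\Phi$ becomes surjective after inverting $q_2$, and for every $p\nmid q_0q_2$ base change to $\bbz_p$ (using $\Lie(\gcal_i)(\bbz_p)=\Lie(\gcal_i)(\bbz[1/q_0])\otimes\bbz_p$, Lemma~\ref{l:FlatModel}) yields $\sum_i\Ad(\gamma_i)\gfr_{2,p}=\gfr_{1,p}$, which is stronger than the displayed identity. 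Taking $p$ larger than all the finitely many exceptional primes accumulated above finishes the proof. I expect the only genuinely delicate points to be keeping the $\gcal_2$-version and the $\gcal_1$-version of $\Psi_k$ consistent while invoking its $\Ad$-equivariance in the second step, and securing one modulus $q_2$ that works for all large $p$ and all $k$ at once in the third step; everything else is bookkeeping enabled by Lemmas~\ref{l:ThekthGrade}, \ref{l:StrongApproxLocalChart} and assumption (A8).
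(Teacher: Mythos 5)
Your proposal is correct and follows essentially the same route as the paper: reduce via the isomorphism $\Psi_k$ of Lemma~\ref{l:ThekthGrade} (together with its $\Ad$-equivariance) to the Lie-algebra identity $\sum_i\Ad(\gamma_i)\gfr_{2,p}+p\gfr_{1,p}=\gfr_{1,p}$, then deduce that identity for all large $p$ from (A8) and invoke Lemma~\ref{l:StrongApproxLocalChart} to substitute $\Gamma_{i,p}[p^k]$ for $\gcal_i(\bbz_p)[p^k]$. You merely spell out two uniformity points the paper leaves implicit, namely the torsion-cokernel argument securing one modulus for all large $p$, and the step from $\Gamma_{2,p}[p]=\gcal_2(\bbz_p)[p]$ to the same equality for all $k$.
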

\begin{proof}
	By (A8), for large enough $p$, we have $\gfr_{1,p}=\sum_{i=1}^c \Ad(\gamma_i)(\gfr_{2,p})$ where $\gfr_{i,p}:=\Lie(\gcal_i)(\bbz_p)$. Hence, by Lemma~\ref{l:ThekthGrade}, we have 
	\be\label{e:KthLevel}
		(\gamma_1\gcal_{2}(\bbz_p)[p^k]\gamma_1^{-1}\cdots\gamma_c\gcal_{2}(\bbz_p)[p^k]\gamma_c^{-1})\gcal_{1}(\bbz_p)[p^{k+1}]=\gcal_{1}(\bbz_p)[p^k].
	\ee
	On the other hand, by the second part of Lemma~\ref{l:StrongApproxLocalChart}, we have that, for large enough $p$, $\Gamma_{i,p}[p^k]=\gcal_{i}(\bbz_p)[p^k]$, which together with (\ref{e:KthLevel}) implies the claim.  
\end{proof}
\begin{proof}[Proof of Proposition~\ref{p:BoundedGenerationLargePrime}]
Using Lemma~\ref{l:ThekthGrade}, for $k=1,\ldots,N-1$, we get that
\be\label{e:TopNLayers}
\textstyle
\prod_{N-1}(\gamma_1\Gamma_{2,p}[p]\gamma_1^{-1}\cdots \gamma_c\Gamma_{2,p}[p]\gamma_c^{-1})\Gamma_{1,p}[p^N]=\Gamma_{1,p}[p].
\ee
Lemma~\ref{l:BGModP} and Equation (\ref{e:TopNLayers}), we get
\be\label{e:Top}
\textstyle
\prod_{3\dim \bbg_1+N-1}(\gamma_1\Gamma_{2,p}\gamma_1^{-1}\cdots \gamma_c\Gamma_{2,p}\gamma_c^{-1})\Gamma_{1,p}[p^N]=\Gamma_{1,p}.
\ee	
Condition (A9) together with Equation (\ref{e:Top}) imply that
\[
\textstyle
\prod_{3\dim \bbg_1+N}(\gamma_1\Gamma_{2,p}\gamma_1^{-1}\cdots \gamma_c\Gamma_{2,p}\gamma_c^{-1})=\Gamma_{1,p}.
\]
\end{proof}

\section{Bounded generation: Adelic version.}\label{s:BGAdelic}
In this section, we will prove an adelic bounded generation statement (see Theorem~\ref{t:BGAdelic}).

We will be working under the assumptions (A1)-(A5) mentioned at the beginning of Section~\ref{s:LargeConrguence}. 

\begin{thm}\label{t:BGAdelic}
	In the setting of (A1)-(A5), let $\wh{\Gamma}_i$ be the closure of $\Gamma_i$ in $\prod_{p\nmid q_0}\GL_{n_0}(\bbz_p)$. Then there are $\gamma_1,\ldots,\gamma_m\in \Gamma_1$ such that 
	\[
	\gamma_1 \wh{\Gamma}_2 \gamma_1^{-1}\cdots \gamma_m \wh{\Gamma}_2\gamma_m^{-1}\subseteq \wh{\Gamma}_1
	\]
	is an open subgroup of $\wh{\Gamma}_1$.
\end{thm}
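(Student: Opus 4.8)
The plan is to fix once and for all a list of conjugating elements, feed in the two $p$-adic bounded-generation statements already proved, and then globalize by working modulo one integer at a time and passing to the inverse limit; the form ``is an open subgroup'' (rather than merely ``contains one'') will be obtained at the end by a stabilization argument.

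\emph{Fixing the $\gamma_i$ and recording the local input.} By Proposition~\ref{p:BGZariskiTopology} choose $\gamma_1,\dots,\gamma_{m_0}\in\Gamma_1$ realizing (A6)--(A7), and then, appending the $\dim\bbg_1$ elements furnished by Lemma~\ref{l:LieAlgebraExpansion} (which rests on Proposition~\ref{p:GeneratingLieAlgebra} and the Zariski-density of $\Gamma_1$ in $\bbg_1$), obtain a list $\gamma_1,\dots,\gamma_c\in\Gamma_1$ for which all of (A1)--(A9) hold. Set $X_p:=\gamma_1\Gamma_{2,p}\gamma_1^{-1}\cdots\gamma_c\Gamma_{2,p}\gamma_c^{-1}\subseteq\Gamma_{1,p}$. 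Then (A9) gives a positive integer $N$ with $\Gamma_{1,p}[p^N]\subseteq X_p$ for \emph{every} $p\nmid q_0$, while Proposition~\ref{p:BoundedGenerationLargePrime} gives a positive integer $C$ and a finite set $S$ of primes with $\prod_C X_p=\Gamma_{1,p}$ for all $p\nmid q_0$ outside $S$. Putting $V_p:=\Gamma_{1,p}$ for $p\notin S$ and $V_p:=\Gamma_{1,p}[p^N]$ for $p\in S$, each $V_p$ is an open finite-index subgroup of $\Gamma_{1,p}$ equal to $\Gamma_{1,p}$ for all but finitely many $p$, and (using $I\in X_p$ to pad the factors at the bad primes) $\prod_C X_p\supseteq V_p$ for every $p\nmid q_0$. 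Hence $\wh V:=\wh\Gamma_1\cap\prod_{p\nmid q_0}V_p$ is an open (finite-index) subgroup of $\wh\Gamma_1$.

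\emph{Globalizing.} Fix $m:=Cc$ and let $Y:=\gamma_1\Gamma_2\gamma_1^{-1}\cdots\gamma_m\Gamma_2\gamma_m^{-1}$, where the list $\gamma_1,\dots,\gamma_c$ is repeated $C$ times (so $Y$, and the corresponding $\wh Y:=\gamma_1\wh\Gamma_2\gamma_1^{-1}\cdots\gamma_m\wh\Gamma_2\gamma_m^{-1}$, do not depend on any modulus). The core of the proof is the assertion $\pi_q(\wh V)\subseteq\pi_q(Y)$ for every $q$ coprime to $q_0$. Writing $q=\prod_{p\mid q}p^{a_p}$ and a target $\pi_q(g)$ with $g\in\wh V$, the component of $\pi_q(g)$ at each $p\mid q$ lies in $\pi_{p^{a_p}}(V_p)\subseteq\pi_{p^{a_p}}(\prod_C X_p)$, hence is a product of $C$ conjugates by the $\gamma_i$ of elements of $\Gamma_{2,p}$, and since $\pi_{p^{a_p}}(\Gamma_{2,p})=\pi_{p^{a_p}}(\Gamma_2)$ these $\Gamma_2$-entries may be taken in $\Gamma_2$. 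For the prime factors of $q$ lying outside a suitable finite set $S_\ast\supseteq S$ one has $\pi_{\,\cdot}(\Gamma_2)=\prod\pi_{p^{a_p}}(\Gamma_2)$ by Nori's strong approximation, so the corresponding $\Gamma_2$-entries can be chosen consistently by the Chinese remainder theorem; for the fixed finite block of prime factors in $S_\ast$, where $\wh\Gamma_2$ may have genuine correlations and this naive patching fails, one runs the argument sketched in Step~6 of the outline: using the truncated logarithm of Lemma~\ref{l:ThekthGrade} to identify $\Gamma_{1,p}[p^k]/\Gamma_{1,p}[p^{k+1}]$ with $\gfr_{1,p}/p\gfr_{1,p}$, the Lie-algebra generation (A8), iterated commutators, and (A9), one shows that the images of the $\gamma_i$-conjugates of $\wh\Gamma_2$ in the $S_\ast$-block of $\wh\Gamma_1$ boundedly generate an open subgroup containing the $S_\ast$-block of $\wh V$. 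Either way $\pi_q(g)\in\pi_q(Y)$ with the number of factors independent of $q$.

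\emph{Inverse limit and stabilization.} Since $\wh Y$ is the continuous image of the compact set $\wh\Gamma_2\times\dots\times\wh\Gamma_2$ ($m$ factors), it is compact, hence closed in $\wh\Gamma_1$, and $\wh Y=\overline Y$, so $\pi_q(\wh Y)=\pi_q(Y)$ for all $q$. For $g\in\wh V$ the sets $\{h\in\wh Y:\pi_q(h)=\pi_q(g)\}$ are non-empty closed subsets of $\wh Y$, nested as $q$ runs through multiples, so by compactness their intersection is non-empty; an element of it agrees with $g$ modulo every $q$ and therefore equals $g$. Thus $\wh V\subseteq\wh Y\subseteq\wh\Gamma_1$, so $\wh Y$ contains the open subgroup $\wh V$. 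Finally, because $I\in\gamma_i\wh\Gamma_2\gamma_i^{-1}$ the sets $\prod_k(\gamma_1\wh\Gamma_2\gamma_1^{-1}\cdots\gamma_c\wh\Gamma_2\gamma_c^{-1})$ increase with $k$ and their union is the subgroup $\wh L:=\langle\gamma_i\wh\Gamma_2\gamma_i^{-1}:1\le i\le c\rangle$, which is open (it contains $\wh V$) and hence of finite index in $\wh\Gamma_1$; choosing finitely many coset representatives of $\wh V$ inside $\wh L$, each lying in some $\prod_{k_j}(\cdots)$, one gets $\prod_{k^\ast+C}(\gamma_1\wh\Gamma_2\gamma_1^{-1}\cdots\gamma_c\wh\Gamma_2\gamma_c^{-1})=\wh L$ with $k^\ast=\max_j k_j$. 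Renaming, this exhibits the open subgroup $\wh L$ of $\wh\Gamma_1$ in the required form $\gamma_1\wh\Gamma_2\gamma_1^{-1}\cdots\gamma_{m'}\wh\Gamma_2\gamma_{m'}^{-1}$.

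\emph{Expected main obstacle.} With Propositions~\ref{p:LargeCongruence} and~\ref{p:BoundedGenerationLargePrime} in hand, everything away from a fixed finite set of ``bad'' primes is soft. The hard part is precisely that finite block: there neither strong approximation for $\Gamma_2$ nor a mod-$p$ bounded-generation statement is available, so one must climb the congruence filtration layer by layer, transporting the Lie-algebra generation (A8) through the truncated-logarithm isomorphisms of Lemma~\ref{l:ThekthGrade} and using iterated commutators to reach congruence level $N$, where (A9) takes over. Making this uniform over the finitely many bad primes, and compatible with the inverse-limit argument, is the technical heart of the proof.
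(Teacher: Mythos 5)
Your overall structure is sound — fixing the $\gamma_i$'s from (A1)--(A9), reducing to a statement modulo each $q$, passing to the inverse limit by compactness of the finite product set $\wh Y$, and stabilizing to turn ``contains an open subgroup'' into ``is an open subgroup'' — and you correctly identify that the real difficulty is the possible failure of $\wh{\Gamma}_2$ to split as a product $\prod_p\Gamma_{2,p}$. But the proposal does not actually resolve that difficulty; it only names it.

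The gap is concrete. When you write $\pi_q(g)$, with $q$ having prime factors inside and outside $S_\ast$, the local computations give you, for each $p\mid q$, a tuple $y_1^{(p)},\dots,y_m^{(p)}\in\Gamma_{2,p}$ with $\prod_i\gamma_i y_i^{(p)}\gamma_i^{-1}\equiv g\pmod{p^{a_p}}$. To land in $\pi_q(Y)$ you need a \emph{single} tuple $y_1,\dots,y_m\in\Gamma_2$ working simultaneously at every $p\mid q$. This means you must be able to choose $y_i\in\Gamma_2$ whose images modulo the various $p^{a_p}$ are prescribed independently, i.e. you need $\pi_q(\Gamma_2)$ (or rather a uniformly controlled subgroup of it) to factor as a product over $p\mid q$. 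Away from a finite set this is a strong-approximation statement for $\Gamma_2$, which is not available in the form you invoke because (A5) only assumes $\bbg_1$ simply connected, not $\bbg_2$; and at the finitely many bad primes, the $S_\ast$-block of $\wh{\Gamma}_2$ is the closure of $\Gamma_2$ in $\prod_{p\in S_\ast}\GL_{n_0}(\bbz_p)$, which can be a proper non-open, non-product subgroup of $\prod_{p\in S_\ast}\Gamma_{2,p}$. Your sketch for the bad block — ``run the argument of Step 6 with truncated logarithms, iterated commutators, and (A9)'' — addresses generation of the local pro-$p$ layers, not the cross-prime correlations in $\wh{\Gamma}_2$; it therefore does not close the gap, and once the block argument is in place you still face the same patching problem between the $S_\ast$-block and its complement.

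The paper's actual proof dissolves this issue at the outset by passing to the simply-connected cover $\iota:\wt{\bbg}_2\to\bbg_2$, taking $\Lambda:=\iota^{-1}(\Gamma_2)\cap\wt{\bbg}_2(\bbq)$ and $\Gamma_2':=\iota(\Lambda)$. By Nori's theorem for the simply-connected $\wt{\bbg}_2$, after replacing $\Lambda$ by a finite-index subgroup one gets $\wh{\Lambda}=\prod_p\Lambda_p$, hence $\wh{\Gamma}_2'=\prod_p\Gamma'_{2,p}$. With this product structure the adelic product of conjugates literally factors prime-by-prime, Propositions~\ref{p:LargeCongruence} and~\ref{p:BoundedGenerationLargePrime} applied to $\Gamma_2'\subseteq\Gamma_1$ immediately give containment of $\prod_{p<p_0}\Gamma_{1,p}[p^N]\cdot\prod_{p\ge p_0}\Gamma_{1,p}$, and there is nothing left to patch. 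You should incorporate this reduction (replacing $\Gamma_2$ by the finite-index subgroup $\Gamma_2'\subseteq\Gamma_2$, which is harmless since it only shrinks $\wh{\Gamma}_2$) before your globalization step; once $\wh{\Gamma}_2$ is replaced by the product $\prod_p\Gamma'_{2,p}$, your inverse-limit and stabilization arguments go through without further work.
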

\begin{proof}
	As in the proof of Lemma~\ref{l:StrongApproxLocalChart} and \cite[Lemma 24]{SGS}, let $\wt{\bbg}_2$ be the simply-connected cover of $\bbg_2$, and $\iota:\wt{\bbg}_2\rightarrow \bbg_2$ be the $\bbq$-central isogeny. Let 
	$\Lambda:=\iota^{-1}(\Gamma_2)\cap \wt{\bbg}_2(\bbq)$ and $\Gamma_2':=\iota(\Lambda)$. Then, as in \cite[Lemma 24]{SGS}, $\Lambda$ is a finitely generated Zariski-dense subgroup of $\wt{\bbg}_2$; and so $\Gamma_2'$ is a finitely generated Zariski-dense subgroup of $\bbg_2$. We also notice that $\Gamma_2'\subseteq \Gamma_2\subseteq \Gamma_1$.
	
	By Nori's strong approximation, the closure $\wh{\Lambda}$ of $\Lambda$ in $\prod_{p\nmid q_0}\iota^{-1}(\gcal_2(\bbz_p))$ is open. So passing to a finite-index subgroup of $\Lambda$, if needed, we can and will assume that $\wh{\Lambda}=\prod_{p\nmid q_0} \Lambda_p$ where $\Lambda_p$ is the closure of $\Lambda$ in $\iota^{-1}(\gcal_2(\bbz_p))$. Therefore the closure of $\Gamma_2'$ in $\prod_{p\nmid q_0}\gcal_2(\bbz_p)$ is $\iota(\wh{\Lambda})=\prod_{p\nmid q_0}\Gamma'_{2,p}$, where $\Gamma_{2,p}'$ is the closure of $\Gamma_{2}'$ in $\gcal_2(\bbz_p)$. We notice that $\Gamma_2'$ is a finitely generated subgroup of $\Gamma_2$, and it is Zariski-dense in $\bbg_2$ since $\bbg_2$ is Zariski-connected.
	
	By Proposition~\ref{p:BGZariskiTopology} applied to $\Gamma_2'\subseteq \Gamma_1$, we get that $\Gamma_2'\subseteq \Gamma_1$ satisfy (A1)-(A7)  for some $\gamma_i\in \Gamma_1$. Hence by Proposition~\ref{p:LargeCongruence} for some $\gamma_i\in  \Gamma_1$ and positive integer $N$ we have
	\be\label{e:LargeCongruence}
		\Gamma_{1,p}[p^N]\subseteq \gamma_1 \Gamma'_{2,p} \gamma_1^{-1} \cdots \gamma_{m_1} \Gamma'_{2,p} \gamma_{m_1}^{-1}.
	\ee 
	By Lemma~\ref{l:LieAlgebraExpansion} applied to $\Gamma_2'\subseteq \Gamma_1$, there are $\gamma_i$ in $\Gamma_1$ such that (A1)-(A9) hold for $\Gamma_2'\subseteq \Gamma_1$. Therefore there are $\gamma_i$ in $\Gamma_1$ such that, for any large prime $p$, we have
	\be\label{e:LargePrimes}
		\gamma_1 \Gamma_{2,p}' \gamma_1^{-1}\cdots \gamma_{m_2} \Gamma'_{2,p}\gamma_{m_2}^{-1}=\Gamma_{1,p}.
	\ee  
	By Equations (\ref{e:LargeCongruence}) and (\ref{e:LargePrimes}), we have there are $\gamma_i$ in $\Gamma_1$ such that
	\be\label{e:OpenSubset}
		\gamma_1 \wh{\Gamma}_2' \gamma_1^{-1}\cdots \gamma_{m_3} \wh{\Gamma}'_2\gamma_{m_3}^{-1}=\prod_{p\nmid q_0}		
		(\gamma_1 {\Gamma}_{2,p}' \gamma_1^{-1}\cdots \gamma_{m_3} \Gamma'_{2,p}\gamma_{m_3}^{-1})
		\supseteq \prod_{p\nmid q_0, p<p_0} \Gamma_{1,p}[p^N]\cdot \prod_{p\ge p_0} \Gamma_{1,p}.
	\ee
Therefore $\gamma_1 \wh{\Gamma}_2 \gamma_1^{-1}\cdots \gamma_{m_3} \wh{\Gamma}_2\gamma_{m_3}^{-1}$ contains an open normal subgroup (of finite index) of $\wh{\Gamma}_1$. So by multiplying this set by itself finitely many times we get an open subgroup of $\wh{\Gamma}_1$. This means there are $\gamma_i$ in $\Gamma_1$ such that $\gamma_1 \wh{\Gamma}_2 \gamma_1^{-1}\cdots \gamma_m \wh{\Gamma}_2\gamma_m^{-1}$ is an open subgroup of $\wh{\Gamma}_1$.
\end{proof}
\section{Proof of Theorem~\ref{t:main}: inducing super-approximation.}\label{s:FinalProof}
Let us recall that $\bbg_i$ is the Zariski-closure of $\Gamma_i$ in $(\GL_{n_0})_{\bbq}$ and $\bbg_i^{\circ}$ is the Zariski-connected component of $\bbg_i$. Let $\wt{\bbg}_1$ be the simply-connected cover of $\bbg_1^{\circ}$ and  $\iota:\wt{\bbg}_1\rightarrow \bbg_1^{\circ}$ be its covering map. Let $\wt{\Lambda}_1:=\iota^{-1}(\Gamma_i)\cap \wt{\bbg}_1(\bbq)$, 
	$\wt{\Lambda}_2:=\iota^{-1}(\Gamma_2)\cap \wt{\Lambda}_1\cap \wt{\bbg}_2(\bbq)$, where $\wt{\bbg}_2$ is the Zariski-connected component of the Zariski-closure of $\iota^{-1}(\Gamma_2)\cap \wt{\Lambda}_1$ in $\wt{\bbg}_1$. Finally let $\Lambda_i:=\iota(\wt{\Lambda}_i)$. 

Now we will check that $\wt{\Lambda}_2\subseteq \wt{\Lambda}_1$ satisfy conditions (A1) and (A5).

\begin{narrow}[left=.4cm]
\h\h\h\h  {\bf (A1)} First we notice that $\bbg_1^{\circ}(\bbq)\cap \Gamma_1$ is a subgroup of finite-index of $\Gamma_1$, and so it is finitely generated. Since $\bbg_1^{\circ}(\bbq)/\iota(\wt{\bbg}_1(\bbq))$ is an abelian bounded torsion group and $\bbg_1^{\circ}(\bbq)\cap \Gamma_1$ is finitely generated, we have that $\Lambda_1$ is a subgroup of finite-index in $\Gamma_1$. Thus $\Lambda_1$ is finitely generated. Since $\ker(\iota)(\bbq)$ is finite, we get that $\wt{\Lambda}_1$ is finitely-generated.	
		
		By a similar argument we get $\iota(\iota^{-1}(\Gamma_2)\cap \wt{\Lambda}_1)$ is a finite-index subgroup of $\Gamma_2\cap \bbg^{\circ}_1(\bbq)$. Therefore $\Lambda_2$ is a subgroup of finite-index in $\Gamma_2$. Hence it is finitely-generated, and so is $\wt{\Lambda}_2$.

	By \cite[Lemma 11]{SG:SAI}, there is a $\bbq$-embedding $\wt{\bbg}_1\subseteq (\GL_{n_1})_{\bbq}$ such that $\wt{\Lambda}_1\subseteq \GL_{n_1}(\bbz[1/q_0])$. So we have $\wt{\Lambda}_2\subseteq\wt{\Lambda}_1$ are two finitely generated subgroups of $\GL_{n_1}(\bbz[1/q_0])$.
\end{narrow}
	
\begin{narrow}[left=.4cm]	
\h\h\h\h {\bf (A5)} Since $\Gamma_1$ is Zariski-dense in $\bbg_1$, $\Gamma_1\cap \bbg_1^{\circ}(\bbq)$ is Zariski-dense in $\bbg_1^{\circ}$. Since $\Lambda_1$ is a finite-index subgroup of $\Gamma_1\cap \bbg_1^{\circ}(\bbq)$, it is Zariski-dense in $\bbg_1^{\circ}$. So the restriction of $\iota$ to the Zariski-closure of $\wt{\Lambda}_1$ is still surjective. And so $\wt{\Lambda}_1$ is Zariski-dense in $\wt{\bbg}_1$.
	
		Since $\Lambda_2$ is a subgroup of finite-index in $\Gamma_2$, its Zariski-closure is a subgroup of finite index of $\bbg_2$. On the other hand, $\wt{\Lambda}_2$ is Zariski-dense in a Zariski-connected group $\wt{\bbg}_2$. Hence the Zariski-closure of $\Lambda_2=\iota(\wt{\Lambda}_2)$ is a Zariski-connected group. Therefore the Zariski-closure of $\Lambda_2$ is $\bbg_2^{\circ}$. 
		
		Since $\Gamma_2\acts \prod_{p\nmid q_0} \GL_{n_0}(\bbz_p)$ has spectral gap, by \cite[Proposition 8]{SG:SAI}, $\bbg_2^{\circ}$ is perfect. By Lemma~\ref{l:Perfect}, we get that $\bbg_1^{\circ}$ is perfect. Therefore $\wt{\bbg}_i$'s are perfect. 	
\end{narrow}			

Hence by Theorem~\ref{t:BGAdelic} we have that there are $\wt{\lambda}_i\in \wt{\Lambda}_1$ such that
$
\wt{\lambda}_1\wh{\wt{\Lambda}}_2\wt{\lambda}_1^{-1}\cdots\wt{\lambda}_m\wh{\wt{\Lambda}}_2\wt{\lambda}_m^{-1}
$
is an open subgroup of $\wh{\wt{\Lambda}}_1$ where $\wh{\wt{\Lambda}}_i$ is the closure of $\wt{\Lambda}_i$ in $\prod_{p\nmid q_0}\GL_{n_1}(\bbz_p)$. Hence, applying $\iota$ and letting $\lambda_i:=\iota(\wt{\lambda}_i)$, we get that 
$
\lambda_1 \wh{\Lambda}_2\lambda_1^{-1}\cdots \lambda_m \wh{\Lambda}_2\lambda_m^{-1}
$
is an open subgroup of $\wh{\Lambda}_1$ where $\wh{\Lambda}_i$ is the closure of $\Lambda_i$ in $\prod_{p\nmid q_0}\GL_{n_0}(\bbz_p)$.

{\bf Claim 1.} Let $\overline{\Omega}_2$ be a finite symmetric generating set of $\Lambda_2$. Let $\Omega:=\bigcup_{i=1}^m \lambda_i \overline{\Omega}_2 \lambda_i^{-1}$ and $\Lambda:=\langle \Omega\rangle$. Then $\Lambda\acts \prod_{p\nmid q_0}\GL_{n_0}(\bbz_p)$ has spectral gap.

\begin{proof}[Proof of Claim 1.] The closure of $\Lambda$ in $\prod_{p\nmid q_0}\GL_{n_0}(\bbz_p)$ is 
\be\label{e:BG}
\wh{\Lambda}:=\lambda_1 \wh{\Lambda}_2\lambda_1^{-1}\cdots \lambda_m \wh{\Lambda}_2\lambda_m^{-1}.
\ee
 By \cite[Remark 15, (5)]{SG:SAI}, 
$\lambda(\pcal_{\Omega};\wh{\Lambda})=
\sup_{\gcd(q,q_0)=1}\lambda(\pcal_{\pi_q(\Omega)};
\pi_q(\Lambda))$.

On the other hand, by \cite[Section 2.3]{SG:SAI} we have that $\Gamma_2\acts \prod_{p\nmid q_0} \GL_{n_0}(\bbz_p)$ has spectral gap if and only if $\Lambda_2\acts \prod_{p\nmid q_0} \GL_{n_0}(\bbz_p)$ has spectral gap. Hence
\be\label{e:InSmallGroup}
1>\lambda(\pcal_{\overline{\Omega}_2};\wh{\Lambda}_2)=\sup_{\gcd(q,q_0)=1}\lambda(\pcal_{\pi_q(\overline{\Omega}_2)};\pi_q(\Lambda_2)).
\ee
Therefore, by Varj\'{u}'s lemma~\cite[Lemma A.4]{BK}, Equations (\ref{e:BG}) and (\ref{e:InSmallGroup}), we get that $\lambda(\pcal_{\Omega};\wh{\Lambda})<1$.
\end{proof}
{\bf Claim 2.} $\Lambda_1\acts \prod_{p\nmid q_0}\GL_{n_0}(\bbz_p)$ has spectral gap.

\begin{proof}[Proof of Claim 2.] Let $\overline{\Lambda}:=\Lambda_1\cap \wh{\Lambda}$ where $\wh{\Lambda}$ is given in the proof of Claim 1. Since $\wh{\Lambda}$ is a subgroup of finite index of $\wh{\Lambda}_1$, $\overline{\Lambda}$ is a subgroup of finite index of $\Lambda_1$. Therefore by \cite[Section 2.3]{SG:SAI} we have that $\Lambda_1\acts \prod_{p\nmid q_0} \GL_{n_0}(\bbz_p)$ has spectral gap if and only if $\overline{\Lambda}\acts \prod_{p\nmid q_0} \GL_{n_0}(\bbz_p)$ has spectral gap. Since $\Lambda\subseteq \overline{\Lambda}$, both of them are dense in $\wh{\Lambda}$, and $\Lambda\acts \wh{\Lambda}$ has spectral gap, we get that $\overline{\Lambda}\acts \wh{\Lambda}$ has spectral gap. Thus $\Lambda_1\acts \prod_{p\nmid q_0} \GL_{n_0}(\bbz_p)$ has spectral gap.
\end{proof}
Since $\Lambda_1$ is a subgroup of finite index of $\Gamma_1$ and $\Lambda_1\acts \prod_{p\nmid q_0}\GL_{n_0}(\bbz_p)$ has spectral gap, another application of \cite[Section 2.3]{SG:SAI} implies that $\Gamma_1\acts \prod_{p\nmid q_0}\GL_{n_0}(\bbz_p)$ has spectral gap.


\begin{thebibliography}{ZZZZZZ}
\bibitem[BG08-a]{BG1} J. Bourgain, A. Gamburd, 
{\em Uniform expansion bounds for Cayley graphs of $\SL_2(\f_p)$,} 
Annals of Mathematics {\bf 167} (2008) 625--642. 

\bibitem[BG08-b]{BG2} J. Bourgain, A. Gamburd, 
{\em Expansion and random walks in $\SL_d(\bbz/p^n\bbz)$:I,} 
Journal of European Mathematical Society {\bf 10} (2008) 987--1011. 

\bibitem[BG09]{BG3} J. Bourgain, A. Gamburd, 
{\em Expansion and random walks in $\SL_d(\bbz/p^n\bbz)$:II.} With an appendix by J. Bourgain, 
Journal of European Mathematical Society {\bf 11}, no. 5., (2009) 1057--1103.

\bibitem[BGS10]{BGS} J. Bourgain, A. Gamburd, P. Sarnak, 
{\em Affine linear sieve, expanders, and sum-product,} 
Inventiones Mathematicae {\bf 179}, no. 3., (2010) 559--644. 	
	
\bibitem[BK14]{BK} J. Bourgain, A. Kontorovich, 
{\em On the local-global conjecture for integral Apollonian gaskets.}  With an appendix by P. Varj\'{u},
Inventiones Mathematicae {\bf 196} (2014), no. 3, 589--650.

\bibitem[BKM]{BKM} J. Bourgain, A. Kontorovich, M. Magee,
{\em Thermodynamic expansion to arbitrary moduli}, 
to appear as an appendix in Journal f\"{u}r die reine und angewandte Mathematik. 

\bibitem[BV12]{BV} J. Bourgain, P. Varj\'{u}, 
{\em Expansion in $\SL_d(\bbz/q\bbz)$, $q$ arbitrary,} 
Inventiones Mathematicae {\bf 188}, no 1, (2012) 151--173.

\bibitem[BS911]{BS} M. Burger, P. Sarnak,
{\em Ramanujan duals II,}
Inventions  Mathematicae {\bf 106} (1991) 1--11.	
	
\bibitem[DG80]{DG} M. Demazure and P. Gabriel, 
Introduction to Algebraic Geometry and Algebraic Groups,
New York, NY, North-Holland publishing company, 1980.

\bibitem[FSZ]{FSZ} E. Fuchs, K. Stange, X. Zhang,
{\em Local-global principles in circle packings,}
preprint.

\bibitem[Gow08]{Gow} W.T. Gowers, 
{\em Quasirandom Groups,} 
Combinatorics, Probability and Computing {\bf 17} (2008) 363--387.
	
\bibitem[EGA66]{EGA} A. Grothendieck,
{\em EGA IV, triosi\'{e}me partie,} 
Publications Mathematiques de IHES {\bf 28} (1966).

\bibitem[HLW06]{HLW} S. Hoory, N. Linial, A. Widgerson, 
{\em Expander graphs and their application,} 
Bull. AMS {\bf 43}, no. 4, (2006) 439--561. 

\bibitem[Hum95]{Hum} J. Humphreys,
Linear algebraic groups,
Graduate texts in mathematics {\bf 21}, Springer-Verlag, New York.

\bibitem[Jac79]{Jac} N. Jacobson,
Lie algebras, 
Dover publications, Inc., New York, 1979.

\bibitem[LS74]{LS} V. Landazuri and G.M. Seitz, 
{\em On the minimal degrees of projective representations
of the finite Chevalley groups,} 
Journal of Algebra {\bf 32} (1974) 418--443.

\bibitem[Lub94]{Lub} A. Lubotzky, 
Discrete Groups, Expanding Graphs and Invariant Measures, 
Birkh\"{a}user, Boston, 1994.

\bibitem[Mar91]{Mar} G.A. Margulis,
Discrete subgroups of semisimple Lie groups,
 Ergebnisse der Mathematik und ihrer Grenzgebiete (3) {\bf 17} Springer-Verlag, 1991. 
 
\bibitem[MOW]{MOW} M. Magee, H. Oh, D. Winter,
{\em Uniform congruence counting for Schottky semigroups in $\SL_2(\bbz)$.} With an appendix by  J. Bourgain, A. Kontorovich, M. Magee.
To appear in  Journal f\"{u}r die reine und angewandte Mathematik. 

\bibitem[NP11]{NP} N. Nikolov and L. Pyber, 
{\em Product decompositions of quasirandom groups
and a Jordan type theorem,} 
Journal of the European Mathematical Society {\bf 13}, no 4, (2011) 1063--1077.

\bibitem[Nor87]{Nor} M. V. Nori, 
{\em On subgroups of $\GL_n(\f_p)$,}
Inventiones Mathematicae {\bf 88} (1987) 257--275.

\bibitem[PR09]{PR} R. Pink, E. R\"{u}tsche,
{\em Adelic openness for Drinfeld modules in generic characteristic,} 
Journal of Number Theory {\bf 129} (2009) 882--907.

\bibitem[SG16]{SG:SAI} A. Salehi Golsefidy,
{\em Super-Approximation, I: $\pfr$-adic semisimple case,}
Accepted for publication in International Mathematics Research Notices (2016).  
\url{dx.doi.org/10.1093/imrn/rnw208}


\bibitem[SG]{SG:SAII} A. Salehi Golsefidy,
{\em Super-Approximation, II: The $p$-adic and bounded power of square-free integers cases,}
preprint.

\bibitem[SGS13]{SGS} A. Salehi Golsefidy, P. Sarnak,
{\em Affine sieve},
Journal of American Mathematical Society, {\bf 26}, no. 4, (2013) 1085--1105.

\bibitem[SGV12]{SGV} A. Salehi Golsefidy and P. Varj\'{u}, 
{\em Expansions in perfect groups,} 
Geometric And Functional Analysis {\bf 22}, no. 6, (2012) 1832--1891.
\end{thebibliography}
\end{document}